\newcommand{\Dim}{\mathrm{Dim}}
\newcommand{\uh}{\upharpoonright}
\newcommand{\fs}{\{0,1\}^*}
\newcommand{\EXT}{\mathsf{EXT}}
\newcommand{\EXTp}{\mathsf{EXT^{p}}}
\newcommand{\N}{\mathbb{N}}
\newcommand{\dom}{\mathrm{dom}}
\newtheorem{theorem}{Theorem}[section]
\newtheorem{lemma}[theorem]{Lemma}
\newtheorem{fact}[theorem]{Fact}
\newtheorem{corollary}[theorem]{Corollary}
\newtheorem{proposition}[theorem]{Proposition}
\newtheorem{remark}[theorem]{Remark}
\theoremstyle{definition}
\newtheorem{definition}[theorem]{Definition}
\title{Optimal bounds for single-source Kolmogorov extractors}
\author{Laurent Bienvenu\thanks{Supported by ANR-15-CE40-0016-01 RaCAF grant}, Barbara F. Csima\thanks{Partially supported by Canadian NSERC Discovery Grant 312501.}, Matthew Harrison-Trainor\thanks{Supported by a Canadian NSERC Banting fellowship.}}
\date{}							
\begin{document}
\maketitle

\begin{abstract}
The rate of randomness (or dimension) of a string $\sigma$ is the ratio $C(\sigma)/|\sigma|$ where $C(\sigma)$ is the Kolmogorov complexity of $\sigma$. While it is known that a single computable transformation cannot increase the rate of randomness of all sequences, Fortnow, Hitchcock, Pavan, Vinodchandran, and Wang showed that for any $0<\alpha<\beta<1$, there are a finite number of computable transformations such that any string of rate at least $\alpha$ is turned into a string of rate at least $\beta$ by one of these transformations. However, their proof only gives very loose bounds on the correspondence between the number of transformations and the increase of rate of randomness one can achieve. By translating this problem to combinatorics on (hyper)graphs, we provide a tight bound, namely: Using $k$ transformations, one can get an increase from rate $\alpha$ to any rate $\beta < k\alpha/(1+(k-1)\alpha)$, and this is optimal.
\end{abstract}

\section{Introduction}

For a finite binary string $\sigma$, the (plain) Kolmogorov complexity $C(\sigma)$ is the length of the shortest program, written in binary and for a fixed universal interpreter, which outputs $\sigma$. The quantity $C(\sigma)$ can range from $0$ to $|\sigma|+d$ for a fixed constant~$d$, and the closer it is to $|\sigma|$, the more random the string $\sigma$ will look (in the sense that it will look like the typical sequence of random bits where bits are chosen independently and with probability $1/2$ to be equal to $0$).

One can normalize by the length of $\sigma$ and consider the quantity $C(\sigma)/|\sigma|$, which measures the \emph{rate of randomness}. This corresponds fairly well to our intuition of partial randomness: consider for example a binary string of length~$3n$ where every third bit is chosen at random and then doubled, like $000111000000111000\ldots$. One would expect this sequence to have a rate of randomness of $\approx 1/3$, and this is indeed what will happen with high probability.

This idea can be extended to infinite binary sequences~$X$, by considering the asymptotic behaviour of $C(X \uh n)/n$, where $X \uh n$ is the $n$-bit prefix of~$n$. As this quantity may not converge, it is natural to consider both
\[
\dim(X) = \liminf_{n \rightarrow \infty}  \frac{C(X \uh n)}{n}
\]
and
\[
\Dim(X) = \limsup_{n \rightarrow \infty}  \frac{C(X \uh n)}{n}
\]
respectively called effective Hausdorff dimension and effective packing dimension of~$X$ (the reason for these names are the close connections between randomness rates and fractal dimensions, see for example~\cite[Chapter 13]{DowneyHirschfeldt} for an extensive presentation of the topic; by extension, for a finite string $\sigma$, the rate of randomness $C(\sigma)/|\sigma|$ is sometimes referred to as the \emph{dimension of $\sigma$}).

Since one can think of a sequence of dimension strictly between $0$ and $1$ to be partially but imperfectly random, one natural question is whether one can `extract randomness' from it. More specifically, can every such sequence $X$ Turing-compute a sequence~$Y$ of dimension~$1$, or close to~$1$, or at least of dimension greater than that of~$X$? This natural question was first formulated in 2004 by Reimann~\cite{Reimann2004} and sparked an intense line of research in the following years. It turns out that the answer depends on which of the two above notions of dimension one considers. For effective Hausdorff dimension, a full negative answer was given by Miller~\cite{Miller2011}.

\begin{theorem}[Miller]
For any rational $q \in [0,1]$, there exists an infinite binary sequence~$X$ such that $\dim(X)=q$ and any infinite binary sequence~$Y$ Turing-computed by~$X$ has $\dim(Y) \leq q$.
\end{theorem}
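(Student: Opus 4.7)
The plan is to construct $X$ by a stagewise finite-extension construction, building $X$ as the union of a nested sequence of initial segments $\sigma_0 \prec \sigma_1 \prec \cdots$. The construction must simultaneously meet three kinds of requirements: a lower bound $\dim(X) \geq q$, an upper bound $\dim(X) \leq q$, and a diagonalization against each Turing functional $\Phi_e$ to ensure $\dim(\Phi_e^X) \leq q$ whenever $\Phi_e^X$ is total.

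The two easier requirements I would handle as follows. To ensure $\dim(X) \geq q$, I would restrict the whole construction to a $\Pi^0_1$ class of sequences whose initial segments all satisfy $C(Z \uh n) \geq qn - c$ for a fixed constant $c$; such a class is nonempty by a direct counting argument. To ensure $\dim(X) \leq q$, I would fix in advance an extremely sparse sequence of lengths $n_0 < n_1 < \cdots$ (say with $n_{k+1} \geq 2^{n_k}$) and arrange at stage $k$ that $X \uh n_k$ has complexity approximately $qn_k$; the sparsity of the $n_k$ keeps this consistent with the lower bound asymptotically, since between two such lengths the complexity rate may return to near $1$.

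For the diagonalization, the basic tool is the inequality $C(\Phi_e^X \uh m) \leq C(X \uh u_e(m)) + O(\log m)$, where $u_e(m)$ is the use of the computation. My plan is to interleave the stages so that when attending to $\Phi_e$ I extend $X$ until, for some output length $m$ with $u_e(m) \leq m$, the use $u_e(m)$ falls on one of the sparse compressible lengths $n_k$. This yields $C(\Phi_e^X \uh m) \leq q n_k + o(n_k) \leq qm + o(m)$, witnessing $\dim(\Phi_e^X) \leq q$.

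The main obstacle is the \emph{over-use} regime, in which $u_e(m) > m$ for all sufficiently large $m$: there the naive bound above gives complexity of order $q \cdot u_e(m)$, which exceeds $qm$, so no witness at length $m$ can be extracted merely from the compressibility of $X \uh u_e(m)$. In this regime one must argue differently, for instance by observing that $\Phi_e$ is then reading much more oracle than it outputs, so on a sufficiently long initial segment of $X$ the possible outputs $\Phi_e^X \uh m$ form a set whose size grows sub-exponentially in $m$, and a direct counting or measure-theoretic argument inside the $\Pi^0_1$ class bounds $\dim(\Phi_e^X)$ accordingly. Coordinating these two regimes uniformly over all $\Phi_e$, while keeping the compressible lengths $n_k$ sparse enough to preserve $\dim(X) \geq q$ and dense enough to align with the use functions of all the $\Phi_e$, is the technical heart of the argument, and I expect it to require a finite-injury priority construction carried out within the $\Pi^0_1$ class.
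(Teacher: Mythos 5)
The paper cites Miller's theorem without proof, so there is nothing in the paper to compare against; I will assess your proposal on its own terms.

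Your treatment of the over-use regime is not correct, and this is fatal because that regime is the crux of the theorem. You claim that when $u_e(m) > m$ for all large $m$, the functional ``reads much more oracle than it outputs,'' hence ``the possible outputs $\Phi_e^X \uh m$ form a set whose size grows sub-exponentially in $m$.'' There is no such implication: consider the functional that, on oracle $Z$, outputs the bits $Z(m^2), Z(m^2+1), \ldots, Z(m^2+m-1)$. Its use at output length $m$ is roughly $m^2 \gg m$, yet all $2^m$ outputs are realizable. Large use does not shrink the output set; it is exactly this regime in which a reduction might concentrate the random information of a low-dimension oracle into a short, high-dimension output, and ruling this out is what makes Miller's theorem hard. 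Your proposal offers no mechanism that actually addresses this.

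A second, independent gap is the alignment step. You fix the sparse compressible lengths $n_k$ in advance and then propose to wait for some output length $m$ with $u_e(m) = n_k$. But the use function of a Turing reduction is whatever it is; the constructor does not get to schedule where $u_e$ lands, and there is no reason any of the values $u_e(m)$ should ever coincide with (or even come within $o(n_k)$ of) a pre-assigned $n_k$. Making the $n_k$ react to the (infinitely many) use functions of the $\Phi_e$ destroys the sparsity you need for $\dim(X) \geq q$. You acknowledge that ``coordinating these two regimes\ldots is the technical heart of the argument,'' but a finite-injury scaffold around the two broken pieces above does not produce a proof; Miller's actual argument does not have this shape and relies on substantially different machinery (a careful analysis of initial-segment complexity inside a suitably chosen $\Pi^0_1$ class, together with a counting lemma that controls \emph{all} reductions at once rather than diagonalizing one at a time). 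As written, the proposal identifies some of the right ingredients but does not contain a correct proof.
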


On the other hand, effective packing dimension is amenable to extraction. Indeed, using deep results from pseudo-randomness in computational complexity~\cite{BarakIW2006}, Fortnow et al.\ proved the following.

\begin{theorem}[Fortnow et al.~\cite{FortnowHPVW2006}]\label{thm:fortnow-et-al}
If $\Dim(X) > 0$, for any $\varepsilon >0$, $X$ computes a~$Y$ such that $\Dim(Y)>1-\varepsilon$. Moreover, the reduction from $X$ to $Y$ is an exponential-time reduction, hence a tt-reduction.
\end{theorem}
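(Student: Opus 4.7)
The strategy is to invoke the Barak--Impagliazzo--Wigderson (BIW) multi-source randomness extractor and reinterpret it in the Kolmogorov-complexity setting. Recall that BIW produce, for every $\delta, \varepsilon > 0$, an integer $k = k(\delta,\varepsilon)$ and a polynomial-time computable function $E \colon (\{0,1\}^n)^k \to \{0,1\}^m$ with $m = \Theta(n)$ such that whenever $Z_1, \ldots, Z_k$ are independent distributions on $\{0,1\}^n$ each of min-entropy $\geq \delta n$, the output $E(Z_1, \ldots, Z_k)$ is exponentially close (in $n$) to a distribution of min-entropy $\geq (1-\varepsilon/2)m$. Since we may replace $\delta$ by any rational in $(0, \Dim(X))$ once $X$ is fixed, we regard $k$ and $\varepsilon$ as known constants depending only on $X$.

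The first key step is to transfer this distributional statement to one about Kolmogorov complexity: there are constants $c, n_0$ such that for all $n \geq n_0$ and all $x_1, \ldots, x_k \in \{0,1\}^n$, if $C(x_1 \cdots x_k) \geq \delta k n$ then $C(E(x_1,\ldots,x_k)) \geq (1-\varepsilon) m - c \log n$. The plan is to bound the cardinality of the bad set $B_n = \{(x_1,\ldots,x_k) : C(E(x_1,\ldots,x_k)) < (1-\varepsilon)m\}$ via the extractor guarantee, and to observe that $B_n$ is uniformly computable (since the set of strings of low complexity is c.e.\ and $E$ is polynomial-time); every element of $B_n$ then admits a description of length $\log|B_n| + O(\log n)$, which we aim to show is well below $\delta k n$.

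Granted the resulting Kolmogorov extractor, the construction of $Y$ from $X$ is blockwise. Fix a fast-growing sequence $n_1 < n_2 < \cdots$ (e.g.\ $n_j = 2^j$) and let $B_j$ be the substring of $X$ on positions $k \sum_{i<j} n_i + 1$ through $k \sum_{i \leq j} n_i$. Define $Y = E(B_1) \cdot E(B_2) \cdots$, each $B_j$ parsed as $k$ sub-blocks of length $n_j$. The map $X \mapsto Y$ is a polynomial-time, hence tt-reduction. Since $\Dim(X) > \delta$, there are infinitely many $N$ with $C(X \uh N) \geq \delta N$; the fast growth of the $n_j$ ensures each such $N$ lies in some block $B_j$ carrying almost all of that complexity, so $C(B_j) \geq \delta k n_j - o(n_j)$. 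The Kolmogorov extractor lemma then gives $C(E(B_j)) \geq (1-\varepsilon) m_j - o(m_j)$; since $m_j$ dominates $m_1 + \cdots + m_{j-1}$, the corresponding $Y$-prefix of length $\sum_{i \leq j} m_i$ has complexity rate $\geq 1 - \varepsilon - o(1)$, whence $\Dim(Y) \geq 1 - \varepsilon$.

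The main obstacle is the middle step. A single high-Kolmogorov-complexity tuple is not literally a product distribution, so the independence-exploiting BIW guarantee cannot be applied term-by-term; one must argue at the set level, bounding $|B_n|$ by combining the distinguishing-against-small-sets form of the extractor conclusion with an auxiliary distribution supported on $B_n$ that satisfies the BIW hypotheses. Once this Kolmogorov-extractor-from-multi-source-extractor conversion is in place, the rest is bookkeeping: tuning the growth rate of the $n_j$ so that the $\limsup$ defining $\Dim(X)$ transfers, via the block decomposition, into a $\limsup$ for $\Dim(Y)$.
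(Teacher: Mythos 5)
This theorem is cited by the paper as a result of Fortnow et al.~\cite{FortnowHPVW2006} and is not reproved here, so there is no in-paper proof to compare against; but the proposal can be assessed on its own merits, and it has two substantive gaps, one of which you flag and one of which you do not.

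The gap you flag --- passing from the distributional BIW guarantee to the Kolmogorov-complexity claim that $C(x_1\cdots x_k)\ge \delta kn$ forces $C(E(x_1,\ldots,x_k))\ge (1-\varepsilon)m - O(\log n)$ --- is the real mathematical content of the result, and the ``bound $|B_n|$ via the extractor'' sketch does not close it. A multi-source extractor's guarantee rules out large \emph{rectangles} $S_1\times\cdots\times S_k$ inside $B_n$; it says nothing directly about the size of an arbitrary $B_n$, which could be large yet rectangle-free (e.g.\ diagonal-like), and the uniform distribution on $B_n$ is not a product distribution. The $(k-1)n$-complexity tuple $(z,z,x_3,\ldots,x_k)$ already shows that the sources fed to $E$ need not even be approximately independent when the concatenation has high complexity. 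Fortnow et al.\ do not make this leap in one shot; their argument uses additional structural properties of the BIW construction and an iterative boosting of the rate, which is precisely why the paper characterizes their proof as relying on ``deep results from pseudo-randomness.'' So this is not bookkeeping but the heart of the theorem.

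The second gap is in the block decomposition, and it is a concrete error. With $n_j = 2^j$ you claim both that the block $B_j$ carries almost all the complexity of $X\upharpoonright N$ and that $m_j$ dominates $m_1+\cdots+m_{j-1}$. Neither holds: $\sum_{i<j} n_i = 2^j - 2 \approx n_j$, so $\sum_{i<j} m_i = \Theta(m_j)$, and the resulting prefix of $Y$ has complexity rate bounded away from $1-\varepsilon$ (the earlier outputs, which can be arbitrary, eat up a constant fraction of the length). To make $\sum_{i<j} m_i = o(m_j)$ you need super-geometric growth (e.g.\ $n_j = 2^{2^j}$), but then a new problem appears: the $\limsup$ defining $\Dim(X)$ realizes at lengths $N$ you do not control, and with block endpoints $N_j$ growing faster than geometrically there is no guarantee that any $N_j$ itself has complexity rate bounded below --- the witnesses $N$ can all fall deep inside the blocks where neither $C(X\upharpoonright N_{j-1})$ nor $C(X\upharpoonright N_j)$ is forced to be large. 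Resolving this tension (between concentration of complexity in a single block and hitting the $\limsup$ on a fixed computable grid) requires a more careful construction than simple disjoint-block concatenation; this is another place where the actual Fortnow et al.\ (and Bienvenu--Doty--Stephan) arguments do genuine work that the proposal elides.
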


(Bienvenu et al.~\cite{BienvenuDotyStephan} independently obtained the first part of the theorem with a more direct proof, but with a reduction from $X$ to $Y$ that is not even guaranteed to be wtt). Conidis~\cite{Conidis12} showed that Fortnow et al.'s theorem cannot be strengthened to $\Dim(Y)=1$, even for Turing reductions.

As an intermediate step towards the proof of Theorem~\ref{thm:fortnow-et-al}, which concerns infinite binary sequences, Fortnow et al. obtained a result of independent interest in the case of finite strings.

\begin{theorem}[Fortnow et al.~\cite{FortnowHPVW2006}]\label{thm:fortnow-et-al-2}
Let $0< \alpha <\beta <1$. There exists a polynomial-time function $E(.,.)$, a linear function $f$ and a constant $h$ such that for every, $n$, for every $\sigma$ of length $f(n)$ such that $C(\sigma) \geq \alpha |\sigma|$, there exists a string $a_\sigma$ of length~$h$ such that $\tau=E(\sigma,a_\sigma)$ has length~$n$ and $C(\tau) \geq \beta |\tau|$.
\end{theorem}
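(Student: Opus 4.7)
The plan is to derive the theorem from the explicit multi-source extractor construction of Barak, Impagliazzo, and Wigderson \cite{BarakIW2006}, whose role in the proof of Theorem \ref{thm:fortnow-et-al} was already alluded to above. Their result provides, for every constant $\delta > 0$, an integer $\ell = \ell(\delta)$ and a polynomial-time function $\mathrm{BIW}:(\{0,1\}^m)^\ell \to \{0,1\}^n$ with $n = \Omega(m)$, mapping any $\ell$ independent sources of min-entropy at least $\delta m$ to an output within exponentially small statistical distance of the uniform distribution on $\{0,1\}^n$.

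First I would fix $\delta$ with $0 < \delta < \alpha$ and take $\ell = \ell(\delta)$. Setting $f(n) = \ell \cdot m(n)$, where $m(n)$ is the linear block length required to make $\mathrm{BIW}$ output $n$ bits, the function $E(\sigma, a)$ parses $\sigma \in \{0,1\}^{f(n)}$ into $\ell$ blocks according to a partition or rearrangement scheme indexed by the short advice $a \in \{0,1\}^h$, and then applies $\mathrm{BIW}$ to the resulting $\ell$-tuple.

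The heart of the argument is a Kolmogorov-complexity analogue of BIW correctness. I would argue by contradiction: suppose $C(\sigma) \geq \alpha f(n)$ but $C(E(\sigma, a)) < \beta n$ for every $a \in \{0,1\}^h$. Let $T_n = \{\tau \in \{0,1\}^n : C(\tau) < \beta n\}$, a c.e.\ set of size less than $2^{\beta n}$. Then $\sigma$ lies in the c.e.\ set
\[
B_n \;=\; \bigcap_{a \in \{0,1\}^h} \{\sigma' \in \{0,1\}^{f(n)} : E(\sigma', a) \in T_n\}.
\]
Using the extractor guarantee I would establish a bound of the form $|B_n| \leq 2^{\alpha f(n)}/\mathrm{poly}(n)$. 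Then $\sigma$ admits a short description as its index in an enumeration of $B_n$, yielding $C(\sigma) < \alpha f(n)$, the desired contradiction.

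The main obstacle will be this counting step: translating the distributional guarantee of BIW (independence and high min-entropy of inputs, statistical closeness of the output to uniform) into a worst-case Kolmogorov bound for individual strings. The role of the constant-length advice $a$ is precisely to compensate for the fact that the $\ell$ blocks of a given $\sigma$ are not literally independent sources; for each $\sigma$ of high complexity, at least one of the $2^h$ advices must select a scheme whose blocks are mutually Kolmogorov-independent at rate $\delta$. Arguing that a fixed constant number of advices suffices for all such $\sigma$ simultaneously is the crux of Fortnow et al.'s analysis, and is also the step where their quantitative bound relating $h$ to $\alpha$ and $\beta$ becomes loose compared to the optimal correspondence established in the present paper.
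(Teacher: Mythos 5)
This theorem is quoted from Fortnow et al.~\cite{FortnowHPVW2006} and is not reproved in the present paper; indeed the authors explicitly note that they do not know whether their own (optimal, but not polynomial-time) hypergraph construction of Section~\ref{sec:total-two} can be made polynomial-time, which is exactly what distinguishes Theorem~\ref{thm:fortnow-et-al-2} from the corollary to Theorem~\ref{thm:main-pos}. You have correctly identified the Barak--Impagliazzo--Wigderson multi-source extractor as the relevant tool and the correct overall architecture: parse $\sigma$ into blocks according to the advice $a$, apply $\mathrm{BIW}$, and argue that a $\sigma$ all of whose images have low complexity must itself have low complexity.

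The problem is that the crucial step is asserted rather than argued, and your sketch wavers between two distinct strategies without supplying the lemma either one needs. You first say you would bound $|B_n|$ ``using the extractor guarantee,'' but the BIW guarantee concerns $\ell$ \emph{independent} distributions of min-entropy $\delta m$, and the flat distribution on $B_n \subseteq (\{0,1\}^m)^{\ell}$ does not factor into independent sources, so the statistical guarantee gives no bound on $|B_n|$ as it stands. You then say that for every $\sigma$ with $C(\sigma) \geq \alpha f(n)$ some advice selects a parsing whose blocks are ``mutually Kolmogorov-independent at rate $\delta$'' -- from which the conclusion would follow directly, making $B_n$ superfluous -- but this rests on two unproved claims: (i) a splitting lemma guaranteeing that a constant number of parsings always yields Kolmogorov-independent blocks from a high-rate string, and (ii) a transfer lemma showing that a statistical multi-source extractor applied to Kolmogorov-independent strings produces a high-complexity output (essentially Zimand's observation that statistical extractors are Kolmogorov extractors). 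Both are nontrivial and are exactly the content of the Fortnow et al.\ proof, so while your outline points at the right machinery, it leaves the actual mathematics open.
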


\bigskip

This is interesting because for any $\alpha < \beta$, there is no computable function $F$ with only one argument and computable function~$f$ such that for every $\sigma$ of length $f(n)$ such that $C(\sigma) \geq \alpha |\sigma|$, $\tau=F(\sigma)$ has length~$n$ and $C(\tau) \geq \beta |\tau|$. (This result seems to be well-known but a full proof is hard to find in the literature. In any case it follows from our results). Therefore, just a few extra bits of extra information (or `advice') makes all the difference if we wish to effectively increase the rate of randomness of individual strings.

More generally, a procedure whose goal is to turn a string or tuple of strings of a given rate of randomness to a string of higher rate of randomness is called a \emph{Kolmogorov extractor}, a term coined by Zimand, who made important contributions to the study of this concept, in particular Kolmogorov extractors with two sources (i.e., two input strings~$x$ and $y$); see the survey~\cite{Zimand2010}. Zimand also studied in~\cite{Zimand2011} single-source Kolmogorov extractors (like the function $E$ of Theorem~\ref{thm:fortnow-et-al-2}), for which the most natural question is how the amount of advice relates to the increase of rate of randomness one can obtain. He showed in particular that earlier results of Vereshchagin and Vyugin~\cite{VereshchaginVyugin} already give an upper bound:

\begin{theorem}[Zimand~\cite{Zimand2011}, based on~\cite{VereshchaginVyugin}]\label{thm:vv}
Let $0<\alpha < \beta <1$ and suppose there is a partial computable function $E(.,.)$, a linear function $f$, and a constant $h$ with the property that for every, $n$, for every $\sigma$ of length $f(n)$ such that $C(\sigma) \geq \alpha |\sigma|$, there exists a string $a_\sigma$ of length~$h$ such that $\tau=E(\sigma,a_\sigma)$ has length~$n$ and $C(\tau) \geq \beta |\tau|$. Then \[\beta \leq 1 - \frac{1-\alpha}{2^{h+1}-1} + o(1).\] 
\end{theorem}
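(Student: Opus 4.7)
The plan is to derive the bound via a compression-and-counting argument in the style of Vereshchagin--Vyugin. Set $m = f(n)$ and let $R = \{\sigma \in \{0,1\}^m : C(\sigma) \geq \alpha m\}$ and $B = \{\tau \in \{0,1\}^n : C(\tau) < \beta n\}$. Standard counting of short programs yields $|R^c| \leq 2^{\alpha m + O(1)}$ and $|B| \leq 2^{\beta n + O(1)}$. Writing $E_a(\sigma) := E(\sigma,a)$ for each advice $a \in \{0,1\}^h$, the hypothesis rephrases as: no $\sigma \in R$ has $E_a(\sigma) \in B$ for every $a$; equivalently, $\bigcap_{a \in \{0,1\}^h} E_a^{-1}(B) \subseteq R^c$ and so has cardinality at most $2^{\alpha m + O(1)}$. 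Put another way, each $\sigma \in R$ has at most $2^h - 1$ advices sending it into $B$.

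The main technical tool is the standard compression lemma: because $E$ is partial computable, given $\tau$ and $a$ one can enumerate the fiber $E_a^{-1}(\tau)$ effectively, so any $\sigma \in E_a^{-1}(\tau)$ satisfies $C(\sigma) \leq C(\tau) + h + \log|E_a^{-1}(\tau)| + O(\log n)$ (specify $\sigma$ by its index in this enumeration, along with $\tau$ and $a$). Used contrapositively, a fiber $E_a^{-1}(\tau)$ with $\tau \in B$ containing any $\sigma \in R$ must have size at least $2^{\alpha m - \beta n - h - O(\log n)}$, forcing bad fibers reaching high-complexity sources to be large. I would then perform a dyadic double-counting on the pairs $(\sigma,a) \in R \times \{0,1\}^h$ with $E_a(\sigma) \in B$: from the source side, the count is bounded by $(2^h - 1)|R|$, while from the target side it equals $\sum_{\tau \in B,\,a} |R \cap E_a^{-1}(\tau)|$, which can be grouped according to the dyadic scale of $\log|E_a^{-1}(\tau)|$. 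Comparing the two sides and rearranging should yield the inequality $(1-\beta) \geq (1-\alpha)/(2^{h+1}-1) - o(1)$, which is precisely the stated bound on $\beta$.

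The principal obstacle is producing the constant $2^{h+1}-1$ precisely rather than a cruder $2^h$ coming from a naive union bound: this finer factor reflects a geometric collapse $\sum_{i=0}^{h} 2^i = 2^{h+1}-1$ arising from the interaction of the $h+1$ dyadic scales of fiber sizes with the $2^h$ advice values, and it requires the careful combinatorial accounting of Vereshchagin and Vyugin. A secondary but nontrivial concern is ensuring that the $O(\log n)$ terms in the compression lemma, as well as the constants hidden in the $O(1)$ error terms for $|R^c|$ and $|B|$, collapse into a uniform $o(1)$ as $n \to \infty$; this will use the linearity of $f$ so that $\log n$ is genuinely negligible compared to $\alpha m$ and $\beta n$.
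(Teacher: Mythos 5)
The paper does not prove Theorem~\ref{thm:vv}: it is stated as background, quoted from Zimand (who in turn relies on Vereshchagin--Vyugin), and the paper's own contribution is the strictly tighter bound of Theorem~\ref{thm:vv-improved}, obtained via the hypergraph machinery of Theorem~\ref{thm:graph-correspondence}, Proposition~\ref{prop:controlling-f} and Lemma~\ref{lem:seq} rather than by a fiber-counting argument. So there is no in-paper proof against which to check your version; it can only be assessed on its own terms.

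Taken on its own terms, your sketch assembles the right raw materials but never carries out the step that actually yields the theorem. The cardinality bounds on $R^c$ and $B$, the reformulation $\bigcap_a E_a^{-1}(B) \subseteq R^c$, and the fiber-compression inequality $C(\sigma) \leq C(E_a(\sigma)) + h + \log|E_a^{-1}(E_a(\sigma))| + O(\log n)$ are all correct and would feature in any Vereshchagin--Vyugin-style argument. But the inequality your double count produces, $\sum_{\tau\in B,\,a}|R\cap E_a^{-1}(\tau)| \leq (2^h-1)|R|$, is an upper bound and by itself imposes no constraint on $\beta$ whatsoever --- it holds trivially whenever the extractor works, regardless of how close $\beta$ is to $1$. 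To force $\beta \leq 1 - (1-\alpha)/(2^{h+1}-1) + o(1)$ one needs a construction in the opposite direction: typically one builds an explicitly enumerable set $B'$ of roughly $2^{\beta' n}$ strings of length $n$, chosen to sit over the most heavily populated fibers, so that $|\bigcap_a E_a^{-1}(B')| > 2^{\alpha m + O(1)}$ and hence some $\sigma$ of complexity $\geq \alpha m$ has $C(E_a(\sigma)) < \beta' n + O(\log n)$ for every $a$, contradicting the hypothesis once $\beta' < \beta$. That construction is exactly the step you defer to ``the careful combinatorial accounting of Vereshchagin and Vyugin,'' and the proposed ``geometric collapse $\sum_{i=0}^h 2^i$ over dyadic fiber scales'' is a guess at where $2^{h+1}-1$ might come from, not a derivation (indeed it is not even clear in which direction a cruder constant would err). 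As written this is a reasonable map of the terrain, not a proof.
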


The goal of this paper is to refine this theorem and get an exact correspondence between the amount of advice $h$ and the rate increase $\alpha \rightarrow \beta$ one can get. We note that allowing an advice of size $h$ is like having a family of $2^h$ partial computable functions $\{E(.,a) \mid |a|=h\}$. In order to have a more fine-grained analysis, we consider the case where we have $k$ functions, where $k$ is not necessarily a power of~$2$. We begin by assuming that each of the $k$ functions is total, which corresponds to asking that $E(.,a)$ converges for every $a$; later we will allow the $k$ functions to be partial, which is exactly equivalent to computation with small advice. We thus propose the following definition.

\begin{definition}
For $k\geq 1$, let $\EXT(k)$ be the set of pairs of reals $(\alpha, \beta)$ such that $\alpha, \beta \in [0,1]$ and for which there exist a total one-to-one computable function $f: \N \rightarrow \N$, $k$ total computable functions $\Gamma_1, \ldots, \Gamma_k: \fs \rightarrow \fs$, and a constant $d \in \N$ with the following property: For all~$n$, and every string $\sigma$, if $|\sigma|=f(n)$, then $|\Gamma_i(\sigma)|=n$ for all $i \leq k$, and if furthermore $C(\sigma) \geq \alpha|\sigma|+d$, then for some $i$, $C(\Gamma_i(\sigma)) \geq \beta |\Gamma_i(\sigma)| - d$.
\end{definition}

(Kolmogorov complexity being defined up to an additive constant, which depends on the choice of universal machine, the use of the constant~$d$ in our definition ensures that $\EXT(k)$ does not depend on the particular choice of universal machine).

\noindent Essentially, $(\alpha,\beta) \in \EXT(k)$ if, for each $n$, one can computably transform each string $\sigma$ of length $f(n)$ into $k$ strings $\tau_1,\ldots,\tau_k$ of length $n$ such that if $\sigma$ had dimension at least $\alpha$, then at least one of the $\tau_i$ has dimension at least $\beta$. That is, one can extract dimension $\beta$ from strings of dimension $\alpha$ using $k$ functions.

An easy argument using information conservation gives us a lower bound for $f(n)$.

\begin{remark}\label{rem:lower-bound-for-f}
If $d$, $f$, and $(\Gamma_i)$ witness that $(\alpha,\beta) \in \EXT(k)$, then the function~$f$ must be such that $f(n) \geq (\beta/\alpha) n - O(1)$ for all~$n$. Indeed, for a given~$n$, take a $\sigma$ such that $|\sigma|=f(n)$ and $C(\sigma)=\alpha f(n)+O(1)$ (there is always such a $\sigma$). On the one hand we have $C(\Gamma_i(\sigma)) \geq \beta n - O(1)$ for some~$i$ by the assumption on the $\Gamma_i$. On the other hand, by information conservation, $C(\Gamma_i(\sigma)) \leq C(\sigma) + O(1) \leq \alpha f(n) + O(1)$. Putting the two together gives us $f(n) \geq (\beta/\alpha)n - O(1)$.
\end{remark}

As announced above, we will obtain a precise characterization of $\EXT(k)$, namely we will prove the following.

\begin{theorem}\label{thm:main-total}
$(\alpha, \beta) \in \EXT(k)$ if and only if one of the following holds:
\begin{itemize}
	\item $k = 1$ and $\beta \leq \alpha$, or
	\item $k \geq 2$ and either $\alpha = \beta = 0$, $\alpha = \beta = 1$, or
		\[ \beta <  \frac{k\alpha}{1+(k-1)\alpha}.\]
\end{itemize}
\end{theorem}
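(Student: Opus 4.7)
The plan is to translate the problem into combinatorics on hypergraphs. For any $k$ computable functions $\Gamma_1,\ldots,\Gamma_k:\{0,1\}^N\to\{0,1\}^n$ with $N=f(n)$, each input $\sigma$ yields a hyperedge $V(\sigma)=\{\Gamma_1(\sigma),\ldots,\Gamma_k(\sigma)\}\subseteq\{0,1\}^n$, and for $L\subseteq\{0,1\}^n$ one sets $S(L)=\{\sigma:V(\sigma)\subseteq L\}$. The extractor property is equivalent to the statement $|S(L_n^-)|\leq 2^{\alpha N+d}$, where $L_n^-:=\{\tau:C(\tau)<\beta n-d\}$ is c.e.\ of size at most $2^{\beta n-d}$. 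So the characterization of $\EXT(k)$ reduces to: for which parameters can one force $|S(L)|$ to be uniformly small over low-complexity $L$?

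For the upper bound (impossibility when $\beta\geq\frac{k\alpha}{1+(k-1)\alpha}$), I would first prove a purely combinatorial lemma: for any family of $k$ functions as above, there is $L\subseteq\{0,1\}^n$ with $|L|\leq 2^{\beta n}$ and $|S(L)|\geq 2^N(|L|/2^n)^k(1-o(1))$. This is the probabilistic method---include each vertex of $\{0,1\}^n$ in $L$ independently with probability $p=2^{(\beta-1)n}$, compute $E[|S(L)|]=\sum_\sigma p^{|V(\sigma)|}\geq 2^N p^k$, and use concentration to extract a realization with $|L|$ and $|S(L)|$ near their means. Combined with the information-conservation bound $N\geq \beta n/\alpha-O(1)$ from Remark~\ref{rem:lower-bound-for-f}, the hypothesis $\beta\geq\frac{k\alpha}{1+(k-1)\alpha}$ rearranges to $(1-\alpha)N\geq k(1-\beta)n-O(1)$, yielding $|S(L)|\geq 2^{\alpha N-O(1)}$. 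Since this $L$ can be found computably from $n$ and (the codes of) the $\Gamma_i$'s, its elements have Kolmogorov complexity $\leq \beta n+O(\log n)$, and lie in $L_n^-$ once constants are absorbed. Counting then produces a $\sigma\in S(L)$ with $C(\sigma)\geq\alpha N+d$ whose $k$ outputs all fall in $L_n^-$, contradicting the extractor.

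For the lower bound (construction when $\beta<\frac{k\alpha}{1+(k-1)\alpha}$), I would choose $f(n)=N$ just above $\beta n/\alpha$; the strict inequality yields a $2^{\Omega(n)}$ slack, so for uniformly random $\Gamma_i$ and any fixed $L$ of size $\leq 2^{\beta n}$ one gets $E[|S(L)|]\leq 2^{\alpha N-\Omega(n)}$. A concentration plus union-bound argument---or an explicit construction via $O(k)$-wise independent hash families whose smaller sample space makes the union bound over candidate $L$'s tractable---produces a computable family such that $|S(L)|\leq 2^{\alpha N-d-1}$ for every $L$ with $|L|\leq 2^{\beta n}$. Consequently, any $\sigma$ whose outputs all lie in $L_n^-$ belongs to the c.e.\ set $S(L_n^-)$ of size $\leq 2^{\alpha N-d-1}$, hence can be described by its enumeration index, giving $C(\sigma)\leq\alpha N-d-1+O(\log n)<\alpha N+d$, which is exactly the extractor condition.

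The main obstacle will be the tightness of the threshold. At the exact boundary $\beta=\frac{k\alpha}{1+(k-1)\alpha}$, the probabilistic expectation in the combinatorial lemma equals $2^{\alpha N}$ rather than exceeding it by the required factor $2^{d+1}$; closing this gap requires either exploiting any slack in $f(n)>\beta n/\alpha$ or carefully absorbing $O(\log n)$ corrections coming from the complexity of describing the optimal $L$. Symmetrically, on the lower-bound side, converting the probabilistic existence into an effective computable construction while taming the doubly-exponentially many candidate $L$ is the technically demanding step, and is where a structured randomness source (e.g.\ limited independence or explicit algebraic constructions) must be deployed.
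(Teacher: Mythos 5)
Your overall strategy is the same as the paper's: reduce the extractor question to a combinatorial statement about $k$-uniform hypergraphs on $2^n$ vertices with $2^{f(n)}$ hyperedges (this is exactly Theorem~\ref{thm:graph-correspondence}), prove existence of good hypergraphs by the probabilistic method when $\beta<\frac{k\alpha}{1+(k-1)\alpha}$ (Theorem~\ref{thm:main-pos}), and use an averaging lemma on induced sub-hypergraphs to rule out $\beta>\frac{k\alpha}{1+(k-1)\alpha}$ (Lemma~\ref{lem:subset-e(U)} and Proposition~\ref{prop:controlling-f}). These parts of your sketch are sound, modulo two small technical points: (i) in your combinatorial lemma you should pick $L$ as a \emph{uniformly random subset of a fixed size} $u$ and use a first-moment argument, rather than independent inclusion followed by a concentration claim for $|S(L)|$, since $|S(L)|$ does not obviously concentrate; (ii) the effectivization is much easier than you fear: because the hypergraph property is decidable for computable $\alpha,\beta$, one simply finds $G_n$ by exhaustive search, so there is no need for hash families or limited-independence constructions, and no issue with ``doubly-exponentially many candidate $L$.''

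The genuine gap is exactly where you flag uncertainty: the boundary case $k\geq 2$, $\alpha,\beta\in(0,1)$, $\beta=\frac{k\alpha}{1+(k-1)\alpha}$. Your suggested remedy --- ``exploiting any slack in $f(n)>\beta n/\alpha$'' --- cannot work, because at the boundary the two bounds of Proposition~\ref{prop:controlling-f} coincide and force $f(n)=(\beta/\alpha)n+O(1)$ exactly: there is no slack to exploit. With that $f$, the averaging lemma gives only $e(U)\geq 2^{\beta n-O(1)}$ for some $U$ of size $2^{\beta n}$, whereas what you need is $e(U)\geq 2^{\beta n+D}$ for \emph{every} constant $D$ (so that the eventual complexity bound on $\sigma$ beats $\alpha f(n)+d$ for arbitrarily large $d$). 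Beating the average by an arbitrary constant factor is a structural, not first-moment, statement, and this is where the paper has to do real work: Lemma~\ref{lem:seq}, proved by induction on $k$. The base case $k=2$ requires pruning to a bipartite simple graph, a dichotomy on whether the high-degree vertices already concentrate enough edges, and in the spread-out case a second-moment/Poisson argument to find a random right-hand block $B$ that hits many left vertices at least $2^{D+1}$ times; the inductive step peels off one coordinate using a randomly chosen cover set $A$. None of this is present in your sketch. Separately, when $\alpha,\beta$ are not computable you cannot appeal to the graph correspondence at all; the paper handles that boundary subcase by a different argument (Theorem~\ref{thm:threshold-not-computable}) showing that a witnessing $f$ would make $\alpha$ computable.
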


Note that $(\alpha,\beta) \in \EXT(k)$ when $\alpha = \beta$ is trivial: it suffices to take $f(n) = n$, $d = 0$, and the identity function $\Gamma(\sigma) = \sigma$. Note also that when $k = 1$, then the expression $\frac{k\alpha}{1+(k-1)\alpha}$ is equal to $\alpha$; so the difference between the cases $k = 1$ and $k \geq 2$ is that in the former we get $\beta \leq \frac{k\alpha}{1+(k-1)\alpha}$ and in the latter we get $\beta < \frac{k\alpha}{1+(k-1)\alpha}$. The following plot shows the graph of $\beta = \frac{k\alpha}{1+(k-1)\alpha}$ for $k = 2$.

\begin{center}
\begin{tikzpicture}
\begin{axis}[
		ylabel style={rotate=-90},
    axis lines = left,
    xlabel = {$\alpha$},
    ylabel = {$\beta$},
		title = {$\beta < \frac{k\alpha}{1+(k-1)\alpha}$ for $k = 2$},
]

\addplot [
		dashed,
		name path = k2,
    domain=-0:1,
    samples=100,
    color=black,
]
{2*x / (1 + x)};

\addplot [
		name path = axis,
    domain=-0:1,
    samples=100,
    color=black,
]
{0};

    \addplot [
        thick,
        color=black,
        fill=black,
        fill opacity=0.05
    ]
    fill between[
        of=k2 and axis,
        soft clip={domain=0:1},
    ];
\end{axis}
\end{tikzpicture}
\end{center}

As $k$ gets larger, we can extract more and more dimension. In line with Theorem~\ref{thm:fortnow-et-al} above, as $k \to \infty$, $\frac{k\alpha}{1+(k-1)\alpha} \to 1$, and so with a large number of extractors one can extract almost-random strings.

\begin{center}
\begin{tikzpicture}
\begin{axis}[
		ylabel style={rotate=-90},
    axis lines = left,
    xlabel = $\alpha$,
    ylabel = {$\beta$},
		title = {$\beta = \frac{k\alpha}{1+(k-1)\alpha}$ for various values of $k$}
]

\addplot [
		name path = k1,
    domain=-0:1,
    samples=100,
    color=black,
]
{x}
node[pos=0.5] {\contour{white}{$k=1$}};

\addplot [
		name path = k2,
    domain=-0:1,
    samples=100,
    color=black,
]
{2*x / (1 + x)}
node[pos=0.5, fill=white] {$k=2$};


\addplot [
		name path = k5,
    domain=-0:1,
    samples=100,
    color=black,
]
{5*x / (1 + 4*x)}
node[pos=0.5, fill=white] {$k=5$};

\addplot [
		name path = k10,
    domain=-0:1,
    samples=100,
    color=black,
]
{10*x / (1 + 9*x)}
node[pos=0.5, fill=white] {$k=10$};


\addplot [
		name path = k100,
    domain=-0:1,
    samples=100,
    color=black,
]
{100*x / (1 + 99*x)}
node[pos=0.52, fill=white] {$k=100$};

\end{axis}
\end{tikzpicture}
\end{center}

In Remark \ref{rem:lower-bound-for-f} above we said that the function $f$ which witnesses that $(\alpha,\beta) \in \EXT(k)$ must satisfy $f(n) \geq (\beta/\alpha) n - O(1)$. In fact, this is optimal; one can witness that $(\alpha,\beta) \in \EXT(k)$ using a function $f(n) = (\beta/\alpha) n - O(1)$. Moreover, from Proposition \ref{prop:controlling-f}, the following inequality holds:
\[ (\beta/\alpha) n - O(1) \leq f(n) \leq \frac{1-\beta}{1-\alpha}kn +O(1). \]
As $\beta \to \frac{k\alpha}{1+(k-1)\alpha}$, we have that $\frac{1-\beta}{1-\alpha}k \to \frac{\beta}{\alpha}$, and so in some sense $f(n) = (\beta/\alpha) n - O(1)$ is optimal.

We do not know if the functions $\Gamma_1,\ldots,\Gamma_k$ which witness that $(\alpha,\beta) \in \EXT(k)$ can be polynomial-time. In Theorem \ref{thm:fortnow-et-al-2}, the extractors were polynomial time, so we know that one can extract at least some dimension with polynomial-time extractors, but what we do not know is whether polynomial time extractors can be optimal. This may be a difficult question, as our construction of optimal extractors passes through a probabilistic construction of hypergraphs. Indeed, Theorem \ref{thm:graph-correspondence} shows that $(\alpha,\beta) \in \EXT(k)$ is equivalent to the existence of a sequence of $k$-hypergraphs whose edges are well spread out in a particular sense which is related to (but not the same as) the jumbled graphs introduced by Thomason \cite{Thomason87a,Thomason87b}. So whether one can find polynomial-time extractors which are optimal is equivalent to finding an efficient construction of these hypergraphs. There are long-standing open problems which ask similar questions. For example, one such open problem is finding an efficient construction of a graph of size $n$ with no cliques or independent sets of size $c \log n$. Such graphs give bounds on the Ramsey numbers and their existence can be proved using the probabilistic methods. See  \cite{Chung}.

The dimension extractors which we have been considering have all been total functions. One could potentially improve the extractors by allowing them to be partial. One way that this might help is that, say with $k = 2$, on input $\sigma$, $\Gamma_1$ could search for a short description of $\sigma$ and then compute an output based on that, while $\Gamma_2$ could assume that $\sigma$ has no short description and so has relatively high Kolmogorov complexity. In this case, $\Gamma_1$ would be undefined if $\sigma$ has no short description. Thus we define $\EXTp(k)$, the set of pairs $(\alpha,\beta)$ such that we can extract dimension $\beta$ from strings of length $\alpha$ using $k$ \textit{partial} functions.

\begin{definition}
For $k\geq 1$, let $\EXTp(k)$ be the set of pairs of reals $(\alpha, \beta)$ such that $\alpha, \beta \in [0,1]$ and for which there exist a total one-to-one computable function $f: \N \rightarrow \N$, $k$ partial computable functions $\Gamma_1, \ldots, \Gamma_k: \fs \rightarrow \fs$, and a constant $d \in \N$ with the following property: For all~$n$, and every string $\sigma$, if $|\sigma|=f(n)$, then $|\Gamma_i(\sigma)|=n$ for all $i \leq k$ for which $\Gamma_i(\sigma)$ is defined, and if furthermore $C(\sigma) \geq \alpha|\sigma|+d$, then for some $i$, $\Gamma_i(\sigma)$ is defined and $C(\Gamma_i(\sigma)) \geq \beta |\Gamma_i(\sigma)| - d$.
\end{definition}

With the same argument as before, we get a lower bound on the function $f$ which can witness that $(\alpha,\beta) \in \EXTp(k)$.

\begin{remark}\label{rem:lb2}
If $d$, $f$, and $(\Gamma_i)$ witness that $(\alpha,\beta) \in \EXTp(k)$, then the function~$f$ must be such that $f(n) \geq (\beta/\alpha) n - O(1)$ for all~$n$.
\end{remark}

We also get a precise characterization of $\EXTp(k)$ wherein it turns out that using partial function gets us only a very slight improvement.

\begin{theorem}\label{thm:main-partial}
$(\alpha, \beta) \in \EXTp(k)$ if and only if one of the following holds:
\begin{itemize}
	\item $k = 1$ and $\alpha \leq \beta$,
	\item $k \geq 2$ and $\beta <  \frac{k\alpha}{1+(k-1)\alpha}$, or
	\item $k \geq 2$, $\beta =  \frac{k\alpha}{1+(k-1)\alpha}$, and $\alpha$ and $\beta$ are computable.
\end{itemize}
\end{theorem}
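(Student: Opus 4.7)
The sufficiency half is mostly a corollary of Theorem~\ref{thm:main-total}: the case $k \geq 2$ with $\beta < k\alpha/(1+(k-1)\alpha)$ holds because every total extractor is also partial, and the $k=1$ case is handled by the same information-conservation argument as in the total version. The only genuinely new sufficiency statement is the boundary case $\beta = k\alpha/(1+(k-1)\alpha)$ with $\alpha$ computable. For that, I plan to take a computable sequence of rationals $\alpha_j \nearrow \alpha$, set $\beta_j = k\alpha_j/(1+(k-1)\alpha_j) \nearrow \beta$, and apply Theorem~\ref{thm:main-total} at each level $j$ to obtain a total extractor $(\Gamma_i^{(j)}, f_j, d_j)$ witnessing $(\alpha_j, \beta_j - 2^{-j}) \in \EXT(k)$. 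I would then dovetail: choose $j(n) \to \infty$ slowly enough that $(\beta_{j(n)} - 2^{-j(n)}) n - d_{j(n)} \geq \beta n - d$ for a fixed $d$, set $f(n) = f_{j(n)}(n)$, and define $\Gamma_i(\sigma)$ by running $\Gamma_i^{(j(n))}(\sigma)$ while simultaneously enumerating the c.e.\ set $B_{j(n)} = \{\tau : C(\tau) < \alpha_{j(n)}|\tau|+d_{j(n)}\}$; if $\sigma$ is ever enumerated into $B_{j(n)}$, I leave $\Gamma_i(\sigma)$ undefined. Any $\sigma$ with $C(\sigma) \geq \alpha|\sigma|+d$ avoids every $B_{j(n)}$, so the total extractor at level $j(n)$ delivers output of the desired quality.

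For the necessity half, I would first handle the non-boundary case $\beta > k\alpha/(1+(k-1)\alpha)$ by adapting the lower-bound argument underlying Theorem~\ref{thm:main-total}. That proof presumably rests on a combinatorial bottleneck for the (hyper)graph associated with the extractors; the modification required is that each $\Gamma_i$ is now permitted to refuse some inputs. The refused set on length $\ell$ is $\Sigma_0^1$ but is contained in $\{\sigma : C(\sigma) < \alpha \ell + d\}$, whose size is $O(2^{\alpha \ell})$, and this is too small to shift the combinatorial threshold, so the same upper bound on $\beta$ carries through.

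The main obstacle is the boundary case: showing that $(\alpha,\beta) \notin \EXTp(k)$ whenever $\beta = k\alpha/(1+(k-1)\alpha)$ and $\alpha$ is non-computable. My plan here is to derive computability of $\alpha$ from any putative extractors. Observe first that if $(f, \Gamma_i, d)$ witness $(\alpha,\beta) \in \EXTp(k)$, then the same tuple witnesses $(\alpha',\beta) \in \EXTp(k)$ for every $\alpha' \geq \alpha$, since the hypothesis $C(\sigma) \geq \alpha'|\sigma|+d$ is stronger than $C(\sigma) \geq \alpha|\sigma|+d$. In particular, for every rational $q > \alpha$ the extractors witness $(q,\beta) \in \EXTp(k)$. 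Conversely, for rational $q < \alpha$ one has $\beta > kq/(1+(k-1)q)$, so by the non-boundary necessity just established, $(q,\beta) \notin \EXTp(k)$; moreover, the failure should manifest as a concrete, computably findable combinatorial obstruction extracted from the first $N(q)$ stages of the extractors. Searching for such a witness uniformly in $q$ then separates rationals above and below $\alpha$, making $\alpha$ computable and yielding the contradiction. The technical work I expect to be hardest is making the "finite witness" step effective; this likely requires a quantitative refinement of the hypergraph correspondence of Theorem~\ref{thm:graph-correspondence}, converting the asymptotic combinatorial lower bound into an explicitly bounded-stage certificate of non-extractability.
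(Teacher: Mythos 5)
Your sufficiency claim inherits cleanly from the total case, but the remaining three parts all have genuine problems.

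\textbf{The boundary positive case.} Your dovetailing plan for $\beta = k\alpha/(1+(k-1)\alpha)$ with $\alpha$ computable needs $(\beta_{j(n)} - 2^{-j(n)})n - d_{j(n)} \geq \beta n - d$ for a \emph{fixed} $d$. But the constants $d_j$ produced by the total construction necessarily blow up as $j \to \infty$: the probabilistic argument for $\EXT(k)$ (Theorem~\ref{thm:main-pos}) secures $e(U) < 2^{\alpha f(n)+d}$ only for $n$ past a threshold that tends to infinity as $\beta$ approaches $k\alpha/(1+(k-1)\alpha)$, and the constant absorbs the finitely many smaller $n$. There is no reason a slowly growing $j(n)$ tames this, and you give none. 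The paper instead gives an explicit construction by induction on $k$: one functional $\Gamma_{k+1}$ waits to find a $\mathbb{U}$-description of $x$ of length in $[n+d, n+2d]$ (which exists when $C(x)\leq n+d$, by padding) and outputs its $n$-bit prefix, while the other $k$ functionals apply the level-$k$ construction to a prefix of $x$. This makes essential use of $\alpha$ being computable (to compute $\lfloor (\beta/\alpha) n\rfloor$), and it is not a limit of total constructions.

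\textbf{The non-boundary negative case.} Your claim that the refused set of each $\Gamma_i$ ``is contained in $\{\sigma : C(\sigma) < \alpha\ell+d\}$'' is false. The definition of $\EXTp(k)$ only requires that \emph{some} $\Gamma_i$ is defined on each high-complexity $\sigma$; an individual $\Gamma_i$ may refuse arbitrarily many high-complexity inputs as long as some other $\Gamma_j$ handles them. Indeed, if your claim were true, one could always totalize each $\Gamma_i$ on its small refused set and conclude $\EXT(k)=\EXTp(k)$ for all $k$, contradicting the theorem being proved. The paper handles this with Lemma~\ref{lem:negative-partial-2}, a careful iterative peel-off: for each $i$ in turn, either $\Gamma_i$ converges on at least half of the current pool (in which case one restricts to preimages of the $2^{\varphi(n)}$ most popular outputs), or it diverges on at least half (in which case $\Gamma_i$ is discarded and the pool is halved); the result is a still-large set $E_n$ all of whose defined images land in a small c.e.\ set $B$, hence have low complexity, while $E_n$ still contains a high-complexity $x$.

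\textbf{The boundary negative case with non-computable $\alpha$.} You propose to compute $\alpha$ by computably distinguishing rationals above and below it via ``finite witnesses'' of non-membership in $\EXTp(k)$, and you correctly flag that making this effective is the hard part; indeed non-membership in $\EXTp(k)$ is not obviously witnessed by any finite object. The paper takes a cleaner route: Theorem~\ref{prop:sqrt-deviation} (built on Lemma~\ref{lem:negative-partial-2}) gives the two-sided sandwich $(\beta/\alpha)n - O(1) \leq f(n) \leq \frac{k}{1+(k-1)\alpha}n + \sqrt{n} + O(1)$, and at the threshold the two bounds coincide, so $f(n)/n \to \beta/\alpha$ at computable speed, directly forcing $\alpha$ to be computable. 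This bypasses the need for any effective combinatorial certificate of failure.
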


\noindent If $\beta =  \frac{k\alpha}{1+(k-1)\alpha}$, then $\alpha = \frac{\beta}{(k - (k-1) \beta)}$ and so $\alpha$ and $\beta$ are either both computable or both non-computable.

\section{Kolmogorov Complexity}

Let us briefly recall some basics about Kolmogorov complexity (three good references on algorithmic complexity theory are \cite{LiVitanyi}, \cite{Nies09}, and \cite{DowneyHirschfeldt}). We call a partial computable function from $\fs$ to $\fs$ a \emph{machine} . For a machine~$M$, the Kolmogorov complexity relative to~$M$ is the function $C_M$ defined by $C_M(\sigma) = \min \{|p| : M(p) = \sigma\}$. There exist optimal machines which are machines~$\mathbb{U}$ such that for any machine~$M$, $C_\mathbb{U} \leq C_M + d$ for some constant $d$ (which depends on~$M$).

One can then fix an optimal machine $\mathbb{U}$ and define the Kolmogorov complexity of a string $\sigma$ to be $C_\mathbb{U}(\sigma)$. By definition of optimality, $C(\sigma)$ is independent of the choice of the optimal machine $\mathbb{U}$ up to an additive constant.

In the same vein, we can define conditional Kolmogorov complexity: the conditional Kolmogorov complexity of $\sigma$ given $\tau$, written $C(\sigma \mid \tau)$, is the length of the shortest program (or description) that produces~$\sigma$ when given~$\tau$ as input. Formally, given a partial computable function $M: \fs \times \fs \rightarrow \fs$, we define $C_M(\sigma \mid \tau) = \min \{|p| : M(p,\tau) = \sigma\}$. Again, it is easy to show that there exists a partial computable $\mathbb{V}: \fs \times \fs \rightarrow \fs$ such that for every other~$M$, $C_\mathbb{V}(\sigma \mid \tau) \leq C_M(\sigma \mid \tau) + d$ for some~$d$. Fixing such a~$\mathbb{V}$, we define $C(\sigma \mid \tau)=C_\mathbb{V}(\sigma \mid \tau)$.

Given a set $A$ of strings we can often make conclusions about the complexities of some or all of the members of $A$ based on the size of $A$. For example, if $A$ is large, then it must have a member of high complexity. The following fact is well-known and easy to see.

\begin{fact}\label{fact:measureC}
If $A \subset 2^{< \omega}$ is a set of strings $\sigma$ which each have $C(\sigma) \leq r$, then $| A|< 2^{r+1}$.
\end{fact}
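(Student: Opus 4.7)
The plan is to use a straightforward counting argument based on the definition of Kolmogorov complexity via the universal machine. First I would observe that if $C(\sigma) \leq r$ for $\sigma \in A$, then by definition there exists a program $p_\sigma$ with $|p_\sigma| \leq r$ such that $\mathbb{U}(p_\sigma) = \sigma$, where $\mathbb{U}$ is the fixed optimal machine used to define $C$.

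Next, I would send $\sigma \mapsto p_\sigma$ (picking any such program, say the lexicographically least) to get an injection from $A$ into the set of binary strings of length at most $r$; injectivity follows because $\mathbb{U}$ is a (partial) function, so distinct outputs must come from distinct inputs. The set of binary strings of length at most $r$ has cardinality
\[
\sum_{i=0}^{r} 2^i = 2^{r+1} - 1.
\]
Therefore $|A| \leq 2^{r+1} - 1 < 2^{r+1}$, which is the desired bound.

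There is no real obstacle here; the statement is essentially a restatement of the fact that the universal machine, being a function on binary strings, cannot produce more distinct outputs than it has available short inputs. The only mild subtlety worth mentioning is that one should be careful to use the exact definition of $C$ from the paper (minimum over $|p|$ with $\mathbb{U}(p) = \sigma$, without any prefix-freeness assumption), so that the counting really is against all strings of length $\leq r$; this is consistent with the plain complexity $C$ used throughout the excerpt.
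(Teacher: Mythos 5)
Your proof is correct and follows essentially the same counting argument as the paper: map each $\sigma \in A$ to a shortest $\mathbb{U}$-program for it, note this is injective, and count the $2^{r+1}-1$ strings of length at most $r$.
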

\begin{proof}
Let $\mathbb{U}$ be the universal machine. There are at most $2^0 + 2^1 + 2^2 + \cdots + 2^r = 2^{r+1} - 1$ strings of length at most $r$ in the domain of $\mathbb{U}$, so $|A| < 2^{r+1}$.
\end{proof}

If $U$ is a small c.e.\ set, then the members of $U$ have low complexity. Moreover, the same is true for sequences of uniformly c.e.\ sets.


\begin{proposition}\label{prop:small-ce-sets}
Let $(U_n)_{n \geq 1}$ be a sequence of uniformly c.e.\ finite sets.
Suppose that $|U_n| \leq 2^{k_n}$.
Then there is a constant $c$ such that for all $n$ and $\sigma \in U_n$, $C(\sigma) \leq k_n + 2 C(n \mid k_n) + c$. In particular, there is a $c'$ such that $C(\sigma) \leq k_n + 2 \log n + c'$.
\end{proposition}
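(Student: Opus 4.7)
The plan is to build a single machine $M$ that outputs any $\sigma \in U_n$ from a short description whose two concatenated parts carry (a) a self-delimiting encoding of a shortest plain description of $n$ given $k_n$, and (b) a fixed-length index of $\sigma$ inside the enumeration of $U_n$. Because $|U_n|\leq 2^{k_n}$, every element of $U_n$ has an index that fits in exactly $k_n$ bits when padded with leading zeros.

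Concretely, given $\sigma\in U_n$, let $q$ be a shortest plain description of $n$ given $k_n$ (so $|q|=C(n\mid k_n)$) and let $j$ be the index of $\sigma$ in the uniform c.e.\ enumeration of $U_n$, written as a $k_n$-bit string. I would take the description to be
\[
p \;=\; 0^{|q|}\,1\,q\,j, \qquad |p| \;=\; 2\,C(n\mid k_n) + k_n + 1.
\]

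The machine $M$ on input $p$ reads the initial run of $0$'s up to the first $1$, calls its length $\ell$, reads the next $\ell$ bits as $q$, and calls the remaining bits $j$, setting $k:=|j|$. It then applies the universal conditional machine $\mathbb{V}$ to $(q,k)$ to recover $n$, enumerates $U_n$ from its uniform c.e.\ index, and outputs the element whose position in the enumeration equals $j$. The key point is that $k_n$ does not need to be hard-wired into $M$: it is recovered as the length of the tail of $p$ after the self-delimiting prefix has been parsed, bypassing the apparent chicken-and-egg problem (we need $k_n$ to read the conditional description, but we only encoded $n$ conditionally on $k_n$). By construction $M(p)=\sigma$, and optimality of the universal machine gives $C(\sigma)\leq|p|+O(1)=k_n+2\,C(n\mid k_n)+c$ for a constant $c$ depending only on $M$ and the universal machine.

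For the ``in particular'' clause, I would invoke the standard bounds $C(n\mid k_n)\leq C(n)+O(1)\leq\log n+O(1)$, giving $C(\sigma)\leq k_n+2\log n+c'$. There is no real obstacle beyond making the self-delimiting prefix do double duty, encoding both the description $q$ and (implicitly, through the length of what remains) the value of $k_n$.
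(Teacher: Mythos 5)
Your proof is correct and follows essentially the same route as the paper's: a machine decoding an input of the form $0^{|q|}1qj$, where $q$ is a shortest $\mathbb{V}$-description of $n$ given $k_n$ and $j$ is the padded index of $\sigma$ in the enumeration of $U_n$, with $k_n$ recovered from the length of the trailing block. The only cosmetic difference is that the paper pads the index to $k_n+1$ bits and passes $|r|-1$ to $\mathbb{V}$, whereas you pad to $k_n$ bits and pass $|j|$ directly; both yield the stated bound.
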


\begin{proof}
Let $M$ be the machine which on an input $0^i1p$ starts by splitting $p$ as $p=qr$ with $|q|=i$. Then, it computes $n=\mathbb{V}(q,|r|-1)$. Finally, interpreting~$r$ as a natural number written in binary, it enumerates $U_n$ and returns the $r$-th enumerated element (if such an element is found). Now, if $\sigma$ is a member of $U_n$, since $|U_n| \leq  2^{k_n}$, one can write the index $r$ of $\sigma$ (in the order of the enumeration) in binary using~$k_n+1$ bits (padding with zeroes in front of this number if necessary). Then $|r|-1=k_n$, and if $q$ is the shortest $\mathbb{V}$-description of $n$ given $k_n$, we have $C_M(0^{|q|}1qr)=\sigma$ by construction, thus $C_M(\sigma) \leq k_n + 2 C(n \mid k_n) + 2$. The result follows by optimality of $\mathbb{V}$.
\end{proof}


\begin{corollary}\label{cor:seq-ceC}
Fix a computable $\alpha \in (0,1)$.
Let $(U_n)_{n \geq 1}$ be a sequence of uniformly c.e.\ sets.
Suppose that $|U_n| \leq 2^{\alpha n}$.
Then there is a constant $c$ such that for all $n$ and $\sigma \in U_n$, $C(\sigma) \leq \alpha n + c$.
\end{corollary}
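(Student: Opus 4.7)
The plan is to deduce this directly from Proposition \ref{prop:small-ce-sets} by making a suitable choice of $k_n$ and showing that $C(n \mid k_n) = O(1)$, which is where the computability of $\alpha$ is used.

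First, set $k_n = \lceil \alpha n \rceil$. Since $|U_n| \leq 2^{\alpha n} \leq 2^{k_n}$, the hypotheses of Proposition \ref{prop:small-ce-sets} are satisfied, and we obtain a constant $c_0$ such that for every $n$ and every $\sigma \in U_n$,
\[ C(\sigma) \leq k_n + 2 C(n \mid k_n) + c_0. \]
Since $k_n \leq \alpha n + 1$, to conclude it suffices to show that $C(n \mid k_n)$ is bounded uniformly in $n$.

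The key point is that because $\alpha$ is a fixed computable real in $(0,1)$, knowing $k_n = \lceil \alpha n \rceil$ determines $n$ up to a bounded number of possibilities. Indeed, $\lceil \alpha m \rceil = k_n$ forces $(k_n - 1)/\alpha < m \leq k_n/\alpha$, and this half-open interval contains at most $\lfloor 1/\alpha \rfloor + 1$ integers. Moreover, using a computable approximation of $\alpha$ to sufficient precision, one can effectively produce from $k_n$ the (finite) list of candidate $m$'s. Thus, given $k_n$, one can specify $n$ by indicating its position in this list, which requires only $O(1)$ bits. Hence $C(n \mid k_n) \leq c_1$ for some constant $c_1$ independent of $n$.

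Plugging this into the inequality above gives $C(\sigma) \leq \alpha n + 1 + 2 c_1 + c_0$ for all $n$ and all $\sigma \in U_n$, which is the desired bound with $c = 1 + 2 c_1 + c_0$. There is no genuine obstacle here; the only subtlety is recognizing that the computability of $\alpha$ is precisely what one needs in order to effectively invert the map $n \mapsto \lceil \alpha n \rceil$ up to a bounded fiber, and hence to control the $C(n \mid k_n)$ term that appeared in the proposition.
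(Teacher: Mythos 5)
Your proof is correct and follows the paper's own route: apply Proposition~\ref{prop:small-ce-sets} with $k_n = \lceil \alpha n \rceil$ and use computability of $\alpha$ to get $C(n \mid k_n) = O(1)$. You are in fact slightly more careful than the paper, which simply asserts that ``$n$ can be computed from $\lceil \alpha n \rceil$'' — overlooking that the map $n \mapsto \lceil \alpha n \rceil$ need not be injective for $\alpha < 1$ — whereas you correctly observe that its fibers have bounded size ($\leq \lfloor 1/\alpha \rfloor + 1$), which is exactly what gives the $O(1)$ bound on the conditional complexity.
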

\begin{proof}
Since $\alpha$ is computable, $n$ can be computed from $\lceil \alpha n \rceil$, which in particular implies $C(n \mid \lceil \alpha n \rceil)=O(1)$. The result then follows from Proposition~\ref{prop:small-ce-sets} with $k_n = \lceil \alpha n \rceil$.
\end{proof}

\section{\texorpdfstring{Characterization of $\EXT$}{The Total Case}}

In this section we will characterize the $(\alpha,\beta) \in \EXT(k)$. We begin in Section \ref{sec:total-one} by showing that when $\alpha, \beta$ are computable, $(\alpha,\beta) \in \EXT(k)$ is equivalent to the existence of a sequence of $k$-hypergraphs whose hyperedges are not too concentrated (in a sense determined by $\alpha$ and $\beta$) within any small set of vertices, thus translating our original problem into a purely combinatorial one. In Section \ref{sec:total-two}, we use the probabilistic method to construct such a sequence of hypergraphs for $\beta < \frac{k \alpha}{1 + (k-1)\alpha}$. Thus if $\beta < \frac{k \alpha}{1 + (k-1)\alpha}$ then $(\alpha,\beta) \in \EXT(k)$. In Section \ref{sec:total-three}, we show that if such a sequence of graphs exists then $\beta \leq \frac{k \alpha}{1 + (k-1)\alpha}$, and moreover if $k \geq 2$ and $\alpha,\beta \in (0,1)$, then $\beta < \frac{k \alpha}{1 + (k-1)\alpha}$. This completes the proof of Theorem \ref{thm:main-partial} together with the simple observation that if $\beta \leq \alpha$, then $(\alpha,\beta) \in \EXT(k)$ for any $k \geq 1$.

\subsection{Translating the problem: hypergraphs}\label{sec:total-one}

There are a number of different choices one may make when fixing the definition of a hypergraph, so in this section we will fix our definition for this paper. Our hypergraphs are $k$-uniform, undirected, and allow repeated hyperedges (so that two edges may be incident on the same set of vertices). All hyperedges are incident on exactly $k$ vertices. In this case $k = 2$, our $2$-hypergraphs are just undirected multigraphs which do not allow loops. More formally:


\begin{definition}
A $k$-hypergraph $G = (V,E)$ is a set of vertices $V$ and a set of hyperedges~$E$, with each edge $e \in E$ associated to a set $i(e)$ of $k$ vertices from $V$.
\end{definition}

In a graph, the edge density is the ratio of edges to potential edges. We make a similar definition here:

\begin{definition}
Let $G = (V,E)$ be a $k$-hypergraph. The edge pseudo-density $p$ of $G$ is
\[ p = \frac{|E|}{|V|^k}.\]
\end{definition}

\noindent The reason that we call this the edge pseudo-density rather than simply the edge density is that $|V|^k$ is slightly larger than ${|V| \choose k}$, the number of potential hyperedges, as hyperedges cannot have repeated vertices. Using $|V|^k$ rather than ${|V| \choose k}$ will make calculations easier.

Finally, given a set $U \subseteq V$ of vertices, we will want to consider the set of edges which are contained within $U$.

\begin{definition}
Let $G = (V,E)$ be a $k$-hypergraph, and let $U \subseteq V$. Then $E(U)$ is the set of edges which are incident only on vertices in $U$, and $e(U)$ is the cardinality of $E(U)$.
\end{definition}

\noindent This is the same as the set of edges in the sub-hypergraph induced by $U$.

The next lemma says that every hypergraph has a small sub-hypergraph with a similar (though possibly slightly smaller) edge pseudo-density. (If we used edge density instead, then we could get that the edge density does not decrease.)

\begin{lemma}\label{lem:subset-e(U)}
Fix $k \geq 2$. There is a constant $c_k$ such that for all~$n$, if $G = (V,E)$ is a $k$-hypergraph with $|V|=n$ and edge pseudo-density~$p$, then for any $c_k \leq u \leq n$ there exists a subset $U$ of $V$ of size $u$ such that $e(U) \geq 0.99 p u^k$ (or equivalently, $(U,E(U))$ has edge pseudo-density at least $0.99p$).
\end{lemma}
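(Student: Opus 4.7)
The plan is to use a straightforward probabilistic/averaging argument: choose $U \subseteq V$ uniformly at random among subsets of size $u$, compute $\mathbb{E}[e(U)]$ by linearity of expectation, and then conclude that some deterministic $U$ achieves at least this expectation.

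For each edge $e \in E$ with its set $i(e)$ of $k$ distinct vertices, the probability that $i(e) \subseteq U$ is
\[ \frac{\binom{n-k}{u-k}}{\binom{n}{u}} = \frac{u^{\underline{k}}}{n^{\underline{k}}}, \]
where $x^{\underline{k}} = x(x-1)\cdots(x-k+1)$ denotes the falling factorial. Summing over edges (and using that edges with equal incidence set are counted with multiplicity in $|E|$ but all contribute the same probability),
\[ \mathbb{E}[e(U)] = |E|\cdot \frac{u^{\underline{k}}}{n^{\underline{k}}} = p n^k \cdot \frac{u^{\underline{k}}}{n^{\underline{k}}}. \]
Comparing this with the target $0.99\, p u^k$, the ratio becomes
\[ \frac{\mathbb{E}[e(U)]}{p u^k} = \frac{u^{\underline{k}}/u^k}{n^{\underline{k}}/n^k} = \frac{\prod_{i=0}^{k-1}(1 - i/u)}{\prod_{i=0}^{k-1}(1 - i/n)}. \]
Since the denominator is at most $1$, it suffices to ensure that $\prod_{i=0}^{k-1}(1 - i/u) \geq 0.99$.

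The key observation is then that this last product depends only on $u$ and $k$, and tends to $1$ as $u \to \infty$ with $k$ fixed. Using the crude bound $\prod_{i=0}^{k-1}(1 - i/u) \geq (1 - (k-1)/u)^k \geq 1 - k(k-1)/u$ (valid for $u \geq k$), we see that choosing, say, $c_k = 100\,k(k-1)$ guarantees $\prod_{i=0}^{k-1}(1 - i/u) \geq 0.99$ for every $u \geq c_k$. Hence for such $u$ we get $\mathbb{E}[e(U)] \geq 0.99\, p u^k$, and by averaging there must exist a specific $U$ of size $u$ with $e(U) \geq 0.99\, p u^k$, as required.

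There is no real obstacle here; the proof is a one-line expectation computation together with an elementary estimate of the falling factorial. The only thing worth being careful about is the direction of the inequality $u^{\underline{k}}/u^k \leq n^{\underline{k}}/n^k$, which is why the loss factor $0.99$ (or any constant less than $1$) is necessary and why $c_k$ must depend on $k$: for $u$ very small relative to $k$ the falling factorial gap cannot be made small, so no such $U$ exists at all. Everything else is bookkeeping.
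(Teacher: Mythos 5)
Your proof is correct and takes essentially the same approach as the paper: choose $U$ uniformly at random among size-$u$ subsets, use linearity of expectation with the hypergeometric probability $\binom{n-k}{u-k}/\binom{n}{u}$, and then bound the falling-factorial ratio so that $c_k$ can be chosen to make the loss factor at least $0.99$. The only cosmetic difference is in how the ratio is estimated (the paper bounds numerator by $u^k(1-k/u)^k$ and denominator by $n^k$, while you factor out $u^k$ and $n^k$ first and observe the $n$-factor is at most $1$), which leads to an explicit $c_k = 100k(k-1)$; the substance is identical.
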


\begin{proof}
If we select the subset $U$ at random uniformly among subsets of $V$ of size~$u$, the probability that a fixed $k$-hyperedge $e \in E$ belongs to $E(U)$ is ${{n-k}\choose{u-k}}/{n\choose{u}}=\frac{u(u-1)\ldots(u-k+1)}{n(n-1)\ldots(n-k+1)}$. The numerator of this last expression is $\geq u^k (1-k/u)^k$, and the denominator is $\leq n^k$. Thus, the probability that a fixed edge~$e$ belongs to $e(U)$ is $\geq (u/n)^k \cdot (1-k/u)^k \geq (u/n)^k \cdot (1-k/c_k)^k$.

Since there are $pn^k$ edges in~$G$, this shows that
\[
\mathbb{E}(e(U)) \geq (u/n)^k (1-k/c_k)^k p n^k \geq p u^k (1-k/c_k)^k
\]
Thus, there must be some $U$ of size $u$ such that $e(U) \geq p u^k (1-k/c_k)^k$. It remains to choose $c_k$ large enough to have $(1-k/c_k)^k \geq 0.99$ to get the desired result.
\end{proof}

The next theorem allows us to convert the initial problem into a purely graph-theoretic one. The intuition is as follows. Suppose that we have functions $\Gamma_1,\ldots,\Gamma_k$ from $\{0,1\}^{f(n)}$ to $\{0,1\}^n$ which we want to have witness that $(\alpha,\beta) \in \EXT(k)$. We can think of our opponent as providing short descriptions for strings in $\{0,1\}^*$, trying to lower their Kolmogorov complexity. If, for some string $\sigma \in \{0,1\}^{f(n)}$, our opponent has provided short descriptions for $\Gamma_1(\sigma),\ldots,\Gamma_k(\sigma)$ (making them of dimension $< \beta$), we must provide a short description for $\sigma$ (making it of dimension $< \alpha$). Both our opponent and ourselves have some quantity of short descriptions that we can use, based on the values of $\alpha$ and $\beta$. We can think of a corresponding hypergraph, where the vertices are strings in $\{0,1\}^n$, and the hyperedges correspond to strings $\sigma \in \{0,1\}^{f(n)}$ which are incident on $\Gamma_1(\sigma),\ldots,\Gamma_k(\sigma)$. Our opponent is giving short descriptions to a set of vertices $U$ while we must give a short description to a hyperedge whenever our opponent gives a short description to every vertex on that hyperedge (i.e., we have to give short descriptions to each hyperedge in $E(U)$). Whether we or our opponent can win this game depends on the sizes of $U$ and $E(U)$ relative to the number of short descriptions we and our opponent have available.

\begin{theorem}\label{thm:graph-correspondence}
Fix $k \geq 2$ and let $(\alpha,\beta)$ be a pair of computable reals in $[0,1]$.  The following are equivalent
\begin{enumerate}
	\item[(a)] $(\alpha,\beta) \in \EXT(k)$
	\item[(b)] There is a constant~$d$ and computable function~$f$ with $f(n) \geq (\beta/\alpha)n-O(1)$ and such that for all~$n$ there is a $k$-hypergraph $G_n$ with $2^n$ vertices and $2^{f(n)}$ hyperedges, with the property that for every $U \subseteq G_n$ with $|U| \leq 2^{\beta n-d}$, $e(U) < 2^{\alpha f(n)+d}$.
	\item[(c)] There is a constant~$d$ and computable function~$f$ with $f(n) \geq  (\beta/\alpha)n-O(1)$ and such that for all~$n$ there is a $k$-hypergraph $G_n$ with $2^n$ vertices and $2^{f(n)}$ hyperedges, with the property that for every $U \subseteq G_n$ with $|U| \leq 2^{\beta n}$, $e(U) < 2^{\alpha f(n)+d}$.
	
\end{enumerate}
\end{theorem}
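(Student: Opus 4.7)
The plan is to establish the three-way equivalence via the circular chain (a) $\Rightarrow$ (b) $\Rightarrow$ (c) $\Rightarrow$ (a). The direction (b) $\Rightarrow$ (c) is purely combinatorial: a set $U$ of size at most $2^{\beta n}$ that would violate (c) can, by Lemma~\ref{lem:subset-e(U)}, be cut down to a subset $U'$ of size roughly $2^{\beta n - d}$ retaining almost all of its edge pseudo-density; a short calculation shows $U'$ then violates (b) provided the ``(c)-constant'' is chosen on the order of $(k+1)d + O(1)$.

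For (c) $\Rightarrow$ (a), I would define $\Gamma_i(\sigma)$ to be the $i$-th vertex of the $\sigma$-th edge of $G_n$ under a fixed enumeration. To verify the witness property, suppose $C(\sigma) \geq \alpha f(n) + d'$ for some $d'$ to be chosen, yet every $\Gamma_i(\sigma)$ has $C(\Gamma_i(\sigma)) < \beta n - d$. Then all $\Gamma_i(\sigma)$ lie in the uniformly c.e.\ set $U^*_n = \{v : C(v) < \beta n - d\}$, which by Fact~\ref{fact:measureC} has size less than $2^{\beta n - d} \leq 2^{\beta n}$. Hypothesis (c) gives $e(U^*_n) < 2^{\alpha f(n) + d}$, and, since the $G_n$ are computable, the preimage $S^*_n = \{\sigma : e_\sigma \in E(U^*_n)\}$ is uniformly c.e.\ of the same size. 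Corollary~\ref{cor:seq-ceC} (applicable because $\alpha$ is computable) then bounds $C(\sigma) \leq \alpha f(n) + O(1)$ for all $\sigma \in S^*_n$, contradicting the assumption once $d'$ is taken large enough.

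The main direction is (a) $\Rightarrow$ (b). I would form the natural hypergraph $G_n$ with $V_n = \{0,1\}^n$ and one edge $e_\sigma = \{\Gamma_1(\sigma), \ldots, \Gamma_k(\sigma)\}$ per $\sigma \in \{0,1\}^{f(n)}$ (the $\sigma$ whose outputs are not all distinct can be handled with a minor modification). The key observation is that although the sets $U$ appearing in (b) are arbitrary and need not be c.e., a \emph{counterexample} to (b), should one exist for a given constant $d'$, can be chosen canonically from $n$: enumerate subsets of $\{0,1\}^n$ of size at most $2^{\beta n - d'}$ and let $U^*_n$ be the lex-first one (if any) with $e(U^*_n) \geq 2^{\alpha f(n) + d'}$. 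The sequence $(U^*_n)$ is then uniformly c.e.\ with $|U^*_n| \leq 2^{\beta n - d'}$, so by Proposition~\ref{prop:small-ce-sets} (using the computability of $\beta$ to ensure $C(n \mid \lceil \beta n - d'\rceil) = O(1)$), every $v \in U^*_n$ has $C(v) \leq \beta n - d' + O(1)$. Choosing $d'$ large enough to dominate this additive constant plus $d$, we force $C(v) < \beta n - d$. The contrapositive of the witness property then gives $C(\sigma) < \alpha f(n) + d$ for every $\sigma$ with $e_\sigma \in E(U^*_n)$, so Fact~\ref{fact:measureC} forces $|S^*_n| < 2^{\alpha f(n) + d}$, contradicting $e(U^*_n) \geq 2^{\alpha f(n) + d'}$. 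Hence no such $U^*_n$ can exist and (b) holds.

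The main obstacle is the (a) $\Rightarrow$ (b) step: the witness property of (a) controls Kolmogorov complexities but not arbitrary cardinalities, so a ``bad'' $U$ that is not c.e.\ could in principle hide elements of high complexity. The trick of always selecting the canonical lex-first witness to the failure of (b) converts an arbitrary bad $U$ into a member of a uniformly c.e.\ family, placing it within reach of Proposition~\ref{prop:small-ce-sets}.
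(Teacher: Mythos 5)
Your proof is correct and follows essentially the same strategy as the paper's: construct the hypergraph $G_n$ directly from the $\Gamma_i$, use Proposition~\ref{prop:small-ce-sets} (or Corollary~\ref{cor:seq-ceC}) to bound the complexity of elements of a canonically chosen c.e.\ family of bad sets, use Fact~\ref{fact:measureC} to get a cardinality contradiction, and use Lemma~\ref{lem:subset-e(U)} to pass from~(b) to~(c). The remaining differences are cosmetic --- you chain the implications in a cycle (a)$\Rightarrow$(b)$\Rightarrow$(c)$\Rightarrow$(a) rather than proving (a)$\Leftrightarrow$(b) and (b)$\Leftrightarrow$(c), index the c.e.\ counterexample family by $n$ rather than by the additive constant in (a)$\Rightarrow$(b), and you leave implicit the verification via Remark~\ref{rem:lower-bound-for-f} that $f(n) \geq (\beta/\alpha)n - O(1)$, which is part of the statement of (b).
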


\begin{proof}
$(a) \Rightarrow (b)$. Suppose~$(b)$ does not hold, and let us show that $(a)$ does not hold. Consider $k$ total computable functions $\Gamma_1,\ldots,\Gamma_k$ with $|\Gamma_i(\sigma)|=n$ when $|\sigma|=f(n)$. We can assume without loss of generality that for every $\sigma$, the $\Gamma_i(\sigma)$ are all different. Indeed, if this is not the case, we can replace the $\Gamma_i$ by the family $\Gamma'_i$ defined as follows: for all~$\sigma$, compute the set $A_\sigma=\{\Gamma_i(\sigma) \mid 1 \leq i \leq k\}$.  Since it has $ \leq k$ elements, computably find a finite set $B_\sigma \supset A_\sigma$ containing exactly $k$ elements, all of length~$n$ if $|\sigma|=f(n)$, and define $\Gamma'_i(\sigma)$ to be the $i$-th element of $B_\sigma$. The $\Gamma'_i$ are total, $\Gamma'_i(\sigma) \not=\Gamma'_j(\sigma)$ if $i \not= j$ and by construction for every $\sigma$ of length~$f(n)$, $\{\Gamma_i(\sigma) \mid 1 \leq i \leq k\} \subseteq \{\Gamma'_i(\sigma) \mid 1 \leq i \leq k\} \subseteq \{0,1\}^n$, From which it is easy to see that the $\Gamma'_i$ also witness that $(\alpha,\beta) \in \EXT(k)$.

Now, under this assumption that the $\Gamma_i(\sigma)$ are all different, for all~$n$, let~$G_n$ be the $k$-hypergraph whose set of vertices is the set of strings of length~$n$, and the hyperedges $e_\sigma$ are incident on $\Gamma_1(\sigma),\ldots,\Gamma_k(\sigma)$ where $\sigma$ ranges over strings of length~$f(n)$. Note that the sequence $(G_n)$ is computable.

By Remark~\ref{rem:lower-bound-for-f}, if $f(n) \not\geq (\beta/\alpha)n-O(1)$ then $(a)$ does not hold, so we may assume $f(n) \geq (\beta/\alpha)n-O(1)$. By failure of~$(b)$, for any given~$d$, there exists some~$n$ such that $G_n$ has a subset~$U$ of size $2^{\beta n-d}$ with $e(U) \geq 2^{\alpha f(n)+d}$. Since this is a decidable property (because $\alpha$, $\beta$ are computable), such a~$G_n$ and subset $U$ can be found effectively given~$d$. Thus, by Proposition \ref{prop:small-ce-sets}, for every $\tau \in U$, $C(\tau) \leq \log |U| + 2\log d +O(1)$, i.e., $C(\tau) \leq \beta n - d + 2\log d + O(1)$. On the other hand, since there are at least $2^{\alpha f(n)+d}$ many $\sigma$ with $e_\sigma$ in $E(U)$, by Fact \ref{fact:measureC} there must be one that satisfies $C(\sigma) \geq \alpha f(n) + d$. By definition of $e_\sigma$, we have that $\Gamma_i(\sigma) \in U$ for all~$i$, and so $C(\Gamma_i(\sigma)) \leq \beta n - d + 2\log d + O(1)$ for all~$i$. Since~$d$ can be taken arbitrarily large, this shows that $(a)$ fails.

\medskip{}

$(b) \Rightarrow (a)$. Fix a constant~$d$ and sequence $(G_n)$ of graphs witnessing that $(b)$ holds. The sequence $(G_n)$ can be taken to be computable as the property of having small $e(U)$ for all $U$ of size $2^{ \beta n - d}$ is decidable, so one can find the $G_n$ by exhaustive search. Then, for all~$n$, effectively create a bijection $\sigma \mapsto e_\sigma$ between strings of length~$f(n)$ and the hyperedges of $G_n$. Finally, for each $\sigma$, define $\Gamma_i(\sigma)$ for $i = 1,\ldots,k$ so that $e_\sigma$ is incident on $\Gamma_1(\sigma),\ldots,\Gamma_k(\sigma)$. The $\Gamma_i$ are total computable functions from strings of length~$f(n)$ to strings of length~$n$. Now, for each~$n$, consider the set $U \subseteq G_n$ of strings~$\tau$ such that $C(\tau) < \beta n-d$. Using Fact \ref{fact:measureC} we see that $|U| \leq 2^{\beta n-d}$, and so by property~$(b)$, $e(U) < 2^{\alpha f(n)+d}$. The sets $U$, and hence also the sets $E(U)$, are c.e.\ sets uniformly in~$n$. So by Corollary \ref{cor:seq-ceC} (and using the fact that the function $f$ is one-to-one) we have that $C(\sigma) \leq \alpha f(n)+d+O(1)$ for every $e_\sigma \in E(U)$. Taking the contrapositive, this means that when $C(\sigma) > \alpha f(n)+d+O(1)$, we have that $e_\sigma \notin E(U)$, which in turns means that some coordinate of $e_\sigma$ is not in~$U$, i.e., $C(\Gamma_i(\sigma)) \geq \beta n - d$ for some~$i$. This proves property~$(a)$.

\medskip{}

$(c) \Rightarrow (b)$. This is immediate.

\medskip{}

$(b) \Rightarrow (c)$. Let $(G_n)$ and $d$ be witnesses that $(b)$ holds. Let $c_k$ be the constant guaranteed by Lemma \ref{lem:subset-e(U)}. We may assume without loss of generality that $n$ is sufficiently large that $c_k \leq 2^{\beta n - d}$. Let $U$ be a subset of $G_n$ of with $|U| \leq 2^{\beta n}$. If $|U| \leq 2^{\beta n - d}$ then we are done. Otherwise, by Lemma~\ref{lem:subset-e(U)}, there exists a subset $U'$ of $U$ such that $|U'| = \left \lfloor{2^{\beta n -d}}\right \rfloor$ and
\[ e(U') \geq 0.99 \frac{e(U)}{|U|^2} |U'|^2 \geq 0.99 \frac{e(U)}{2^{2\beta n}} 2^{2 \beta n - 2d - 2} = 0.99 \cdot 2^{-2d - 2} e(U).\]
By $(b)$, we have $e(U') \leq 2^{\alpha f(n) + d}$. Putting the two together, we get $e(U) \leq 2^{\alpha f(n) +3d + O(1)}$. Thus $(c)$ holds as witnessed by the sequence $(G_n)$ and constant $3d + O(1)$.
\end{proof}

\subsection{The positive case: random hypergraphs}\label{sec:total-two}

Given $\beta < \frac{k \alpha}{1 + (k-1)\alpha}$, we want to show that $(\alpha,\beta) \in \EXT(k)$. By Theorem \ref{thm:graph-correspondence}, we can do this by constructing an appropriate sequence of hypergraphs. We will show that such a sequence exists using a probabilistic construction, i.e., by showing that if we choose a hypergraph at random, it has a positive probability of having the properties we want, and so, in particular, such a graph exists. In computing the associated probabilities, we will use the Chernoff bound. The Chernoff bound has many forms, and we state the two that we will use.

\begin{theorem}[Chernoff bound; see Theorem 4.4 (3) of \cite{MitzenmacherUpfal17}]
Let $X_1,\ldots,X_n$ be independent random variables taking values in $\{0,1\}$ and let $X$ be their sum. Let $\mu = \mathbb{E}[X]$.
\begin{enumerate}
\item[(1)]
For any $\delta \geq 6$,
\[ \Pr( X \geq \delta \mu) \leq 2^{- \delta\mu}.\]
\item[(2)]
For any $0\leq \delta \leq 1$,
\[ \Pr( X \leq (1-\delta) \mu) \leq e^{- \frac{\delta^2\mu}{2}}.\]
\end{enumerate}

\end{theorem}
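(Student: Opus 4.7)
The plan is to invoke the standard Chernoff/Bernstein exponential-moment method for both tail bounds. For any $t > 0$, Markov's inequality applied to $e^{tX}$ gives $\Pr(X \geq a) \leq e^{-ta}\, \mathbb{E}[e^{tX}]$, and since the $X_i$ are independent, $\mathbb{E}[e^{tX}] = \prod_i \mathbb{E}[e^{tX_i}]$. For each Bernoulli $X_i$ with $p_i = \Pr(X_i = 1)$, the identity $\mathbb{E}[e^{tX_i}] = 1 + p_i(e^t - 1) \leq \exp(p_i(e^t - 1))$ yields $\mathbb{E}[e^{tX}] \leq \exp(\mu(e^t - 1))$. Both parts then reduce to choosing $t$ well and checking an elementary numerical inequality.

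For part (1), I would take $a = \delta\mu$ and set $t = \ln\delta$, which gives $\Pr(X \geq \delta\mu) \leq (e^{\delta-1}/\delta^\delta)^\mu$. Comparing with $2^{-\delta\mu}$ and taking logarithms reduces the claim to $\ln(\delta/2) \geq 1 - 1/\delta$ for $\delta \geq 6$. This is verified at $\delta = 6$ (since $\ln 3 > 5/6$), and the derivative of the left side, $1/\delta$, dominates that of the right side, $1/\delta^2$, for all $\delta \geq 1$, so the inequality persists throughout $[6,\infty)$.

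For part (2), the symmetric move is to apply Markov's inequality to $e^{-tX}$ instead: $\Pr(X \leq (1-\delta)\mu) \leq e^{t(1-\delta)\mu}\,\mathbb{E}[e^{-tX}] \leq \exp(\mu(e^{-t} - 1 + t(1-\delta)))$. Optimizing in $t$ gives $t = -\ln(1-\delta)$, and the bound becomes $\bigl((1-\delta)^{-(1-\delta)} e^{-\delta}\bigr)^\mu$. The remaining step, and the main (if minor) analytic obstacle, is the inequality $\delta + (1-\delta)\ln(1-\delta) \geq \delta^2/2$ on $[0,1)$. I would dispatch this by setting $f(\delta) = \delta + (1-\delta)\ln(1-\delta) - \delta^2/2$ and noting $f(0) = 0$, $f'(\delta) = -\ln(1-\delta) - \delta$ so $f'(0) = 0$, and $f''(\delta) = \delta/(1-\delta) \geq 0$ on $[0,1)$, which forces $f \geq 0$ throughout the interval and completes the bound.
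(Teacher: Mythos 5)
The paper cites this Chernoff bound from Mitzenmacher and Upfal rather than proving it, so there is no in-paper argument to compare against. Your proof is the standard exponential-moment derivation and is correct in all details: the MGF bound $\mathbb{E}[e^{tX}] \leq \exp(\mu(e^t-1))$, the optimizing choices $t = \ln\delta$ for the upper tail and $t = -\ln(1-\delta)$ for the lower tail, and the reduction of each bound to an elementary scalar inequality are all handled properly. In part (1) the reduction to $\ln(\delta/2) \geq 1 - 1/\delta$ is exactly right, and your monotonicity argument (the derivative $1/\delta - 1/\delta^2 = (\delta-1)/\delta^2 > 0$ for $\delta > 1$, combined with the check $\ln 3 > 5/6$ at $\delta = 6$) closes it. In part (2) the convexity argument for $f(\delta) = \delta + (1-\delta)\ln(1-\delta) - \delta^2/2$, with $f(0) = f'(0) = 0$ and $f''(\delta) = \delta/(1-\delta) \geq 0$, is a clean way to dispatch the needed inequality. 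One tiny loose end: you work on $[0,1)$, but the statement allows $\delta = 1$; this endpoint is covered either by taking the limit as $\delta \to 1^-$ (where $(1-\delta)\ln(1-\delta) \to 0$, so the exponent tends to $-\mu$, and $e^{-\mu} \leq e^{-\mu/2}$) or by noting the bound is continuous in $\delta$. Not a real gap, just worth a sentence if you want full rigor.
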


\noindent We are now ready for the construction of the sequence of hypergraphs. One should think of taking $f(n) = [k - (k-1)\beta]n + O(1)$.

\begin{theorem}\label{thm:main-pos}
Fix $k$. Let $\alpha,\beta \in (0,1)$ be such that $\beta < \frac{k \alpha}{1 + (k-1)\alpha}$. There is a constant $d$ such that for each $n$ there is a $k$-hypergraph with $2^n$ vertices and at least $2^{[k - (k-1)\beta]n}$ hyperedges such that for every $U$ with $|U| \leq 2^{\beta n}$, $e(U) < 2^{\alpha [k - (k-1)\beta] n + d}$.
\end{theorem}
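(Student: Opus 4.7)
The plan is to use the probabilistic method. Set $m = \lceil [k-(k-1)\beta]n\rceil$ and construct $G_n$ by sampling $2^m$ hyperedges independently, each as a uniformly random $k$-subset of $V = \{0,1\}^n$; I will then show that with positive probability the resulting hypergraph satisfies the desired sparsity condition for every $U$ with $|U| \leq 2^{\beta n}$, so in particular some hypergraph witnessing the theorem must exist.

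First I would fix a set $U \subseteq V$ of size $u$. Since a single random edge lies inside $U$ with probability at most $(u/2^n)^k$, and edges are sampled independently, the indicator variables $\mathbf{1}[e_i \subseteq U]$ are i.i.d.\ Bernoulli and
\[ \mu := \mathbb{E}[e(U)] \leq 2^m (u/2^n)^k. \]
For $u = \lfloor 2^{\beta n}\rfloor$ and $m = [k-(k-1)\beta]n$, plugging in gives $\mu \leq 2^{\beta n + O(1)}$. The key algebraic observation is that the hypothesis $\beta < k\alpha/(1+(k-1)\alpha)$ is equivalent (by clearing denominators) to $\alpha[k-(k-1)\beta] > \beta$, so $\alpha m - \beta n = \varepsilon n$ for some $\varepsilon > 0$. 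Hence for every fixed constant $d$ and $n$ sufficiently large, the threshold $T := 2^{\alpha m + d}$ satisfies $T \geq 6\mu$, and part~(1) of the Chernoff bound gives
\[ \Pr\bigl(e(U) \geq T\bigr) \leq 2^{-T} = 2^{-2^{\alpha m + d}}, \]
doubly-exponentially small.

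Next I would union-bound over $U$. By monotonicity ($e(U') \leq e(U)$ when $U' \subseteq U$), it suffices to control $e(U)$ for every $U$ of size exactly $\lfloor 2^{\beta n}\rfloor$, of which there are at most $\binom{2^n}{2^{\beta n}} \leq 2^{n \cdot 2^{\beta n}}$. Thus
\[ \Pr\bigl(\exists\, U,\ |U| \leq 2^{\beta n},\ e(U) \geq T\bigr) \leq 2^{n \cdot 2^{\beta n} - 2^{\alpha m + d}}, \]
which is far less than $1$ for all large $n$ because $\alpha m + d - \beta n = \Omega(n)$ makes the doubly-exponential Chernoff factor dominate the singly-exponential union-bound factor. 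For the finitely many small $n$ that remain, I would choose $d$ large enough that $2^{\alpha m + d}$ exceeds the total number of hyperedges, making the required inequality trivial. The main obstacle is simply packaging the two exponential scales together: the Chernoff tail must be stronger than the $\binom{2^n}{2^{\beta n}}$ overcount, and this is exactly what the identity $\alpha[k-(k-1)\beta] > \beta$ buys us. A minor technical point is the sampling model — one must choose random $k$-subsets (or $k$-tuples of distinct vertices) so that the per-edge probabilities factor cleanly and the indicators remain independent, but any of the standard choices works since $(u/2^n)^k$ is a valid upper bound for all of them.
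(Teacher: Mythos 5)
Your proof is correct and follows the same overall route as the paper's: a random hypergraph, a Chernoff tail bound on $e(U)$ for a fixed $U$, and a union bound over all $U$ of size about $2^{\beta n}$, with the inequality $\alpha[k-(k-1)\beta]>\beta$ providing the slack that makes the doubly-exponential Chernoff tail beat the singly-exponential union bound. The one genuine (though minor) divergence is the random model: you sample a fixed number $2^m$ of independent uniform $k$-subsets, so the edge count is exact by construction, whereas the paper includes each potential hyperedge independently with probability $p=2^{-(k-1)\beta n+D}$ and then spends a second Chernoff argument to guarantee enough edges were chosen. Your variant saves that extra step; the trade-off is that you produce a multi-hypergraph (possible repeated hyperedges), which is fine because the paper's definition explicitly allows repeats. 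Two small points worth making explicit when you write this up: (i) the paper's Chernoff inequality is stated in terms of the true mean $\mathbb{E}[e(U)]$, so you need the (easy) remark that an upper bound $\mu\geq\mathbb{E}[e(U)]$ with $T\geq 6\mu$ still yields $\Pr(e(U)\geq T)\leq 2^{-T}$, with the degenerate case $\mathbb{E}[e(U)]=0$ handled separately; and (ii) since $m=\lceil[k-(k-1)\beta]n\rceil$, your threshold is $2^{\alpha m+d}\leq 2^{\alpha[k-(k-1)\beta]n+d+1}$, so the final constant in the theorem is $d+1$ rather than $d$ — a harmless rename, but the quantifier order (one $d$ working for all $n$, chosen after fixing the finite set of small $n$) should be stated cleanly rather than left implicit.
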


\begin{proof}

We will show the existence of the graph $G$ by showing that a random graph is likely to satisfy the properties we desire. Consider picking a $k$-hypergraph $G$ with $2^n$ vertices at random, where each $k$-hyperedge has probability $p = 2^{-(k-1)\beta n+D}$ to belong to $G$, independently of other hyperedges, where $D$ is a large constant (to be specified as we go). The expected number of hyperedges in~$G$ is
\[
{2^n \choose k} \cdot p \geq  \left(2^{n-o(1)}\right)^k/k! \cdot 2^{-(k-1)\beta n+D} \geq 2^{[k - (k-1)\beta]n+D-o(1)}/k!
\]
Thus, by the Chernoff bound, if $D$ is chosen large enough, $G$ will have at least $2^{[k - (k-1)\beta]n}$ hyperedges (which is the desired amount), with probability $>1/2$.

Fix a set $U$ of at most $2^{\beta n }$ vertices. The expected number of hyperedges in $E(U)$ is thus $p$ times the number of sets of $k$ vertices in $U$, which gives
\[ \mathbb{E}[e(U)] \leq p {2^{\beta n} \choose k} \leq 2^{-(k-1)\beta n+D} \cdot (2^{\beta n})^k = 2^{\beta n+D}.\]
This is the case for all sets $U$ of vertices with $|U| \leq 2^{\beta n}$.

By the Chernoff bound,
\[ \Pr\left[e(U) > 2 n 2^{\beta n} \right] < 2^{- 2 n 2^{\beta n}}. \]
To use the Chernoff bound, we require $2 n 2^{\beta n} \geq 6 \mathbb{E}[e(U)]$ which it is easy to see is true for $n \geq 2^{D+2}$.
The number of sets $U$ of size at most $2^{\beta n}$ is less than $(2^n)^{2^{\beta n}} = 2^{n 2^{\beta n}}$.
So the probability that there is a set $U$ of size at most $2^{\beta n}$ with $e(U) > 2n2^{\beta n}$ is
\[ \sum_{|U|\leq 2^{\beta n}} \Pr\left[e(U) > 2 n 2^{\beta n} \right] < \sum_{|U|\leq 2^{\beta n}}  2^{- 2 n 2^{\beta n}} \leq 2^{n 2^{\beta n}} 2^{- 2 n 2^{\beta n}} = 2^{-n 2^{\beta n}}. \]
For sufficiently large $n$, this is strictly less than one half (which was the probability that $G$ had at least the desired number of edges).
So for sufficiently large $n$ there exists a graph $G$ with enough edges and such that for all $U$ with $|U| \leq 2^{\beta n}$, $e(U) \leq 2 n 2^{\beta n}$.
It remains to show that for sufficiently large $n$,
\[ 2 n 2^{\beta n} < 2^{\alpha[k - (k-1)\beta]n}. \]
We have that
\[ \beta < \frac{k \alpha}{1 + (k-1)\alpha} \]
and so
\[ \beta + (k-1) \beta \alpha < k \alpha \Longrightarrow \beta < k \alpha - (k-1) \beta \alpha = \alpha( k - (k-1) \beta). \]
It follows that, for sufficiently large $n$, for all sets $U$ of vertices from $G$ with $|U| \leq 2^{\beta n}$,
\[ e(U) \leq 2 n 2^{\beta n} < 2^{\alpha[k - (k-1)\beta]n}. \]
This completes the proof.
\end{proof}

\begin{corollary}
When $\beta < \frac{k \alpha}{1 + (k-1)\alpha}$, the pair $(\alpha,\beta)$ belongs to $\EXT(k)$.
\end{corollary}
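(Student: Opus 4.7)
The plan is to combine Theorem~\ref{thm:main-pos} with the implication $(c) \Rightarrow (a)$ of Theorem~\ref{thm:graph-correspondence}. Theorem~\ref{thm:main-pos} supplies a sequence of hypergraphs with precisely the properties listed in condition $(c)$, and Theorem~\ref{thm:graph-correspondence} converts such a sequence into an $\EXT(k)$ witness. The only subtlety is that Theorem~\ref{thm:graph-correspondence} is stated for computable $\alpha,\beta$, whereas the corollary makes no such assumption; I would handle this with a short monotonicity argument.

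First, I would choose rational (hence computable) approximations $\alpha' \leq \alpha$ and $\beta' \geq \beta$ still satisfying $\beta' < \frac{k\alpha'}{1+(k-1)\alpha'}$. Such rationals exist because $x \mapsto \frac{kx}{1+(k-1)x}$ is continuous and strictly increasing on $[0,1]$: from $\beta < \frac{k\alpha}{1+(k-1)\alpha}$ we can find $\varepsilon > 0$ with $\frac{k(\alpha-\varepsilon)}{1+(k-1)(\alpha-\varepsilon)} > \beta + \varepsilon$, then pick $\alpha' \in (\alpha-\varepsilon,\alpha] \cap \mathbb{Q}$ and $\beta' \in [\beta,\beta+\varepsilon) \cap \mathbb{Q}$.

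Next, I would apply Theorem~\ref{thm:main-pos} to $(\alpha',\beta')$ to obtain, for each sufficiently large $n$, a $k$-hypergraph on $2^n$ vertices with at least $2^{[k-(k-1)\beta']n}$ hyperedges such that every $U$ with $|U| \leq 2^{\beta' n}$ has $e(U) < 2^{\alpha'[k-(k-1)\beta']n + d}$. Setting $f(n) = \lfloor [k-(k-1)\beta']n \rfloor$ (modified trivially to be one-to-one and to handle small $n$), and trimming each hypergraph down to exactly $2^{f(n)}$ edges, I would verify that $\beta' < \frac{k\alpha'}{1+(k-1)\alpha'}$ rearranges algebraically to $\alpha'(k-(k-1)\beta') > \beta'$, which is exactly the bound $f(n) \geq (\beta'/\alpha')n - O(1)$ required by condition $(c)$ of Theorem~\ref{thm:graph-correspondence}. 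Applying the $(c) \Rightarrow (a)$ direction then gives $(\alpha',\beta') \in \EXT(k)$.

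Finally, I would observe that $\EXT(k)$ is monotone in the obvious sense: if $\Gamma_1,\ldots,\Gamma_k$, $f$, and $d$ witness $(\alpha',\beta') \in \EXT(k)$, they also witness $(\alpha,\beta) \in \EXT(k)$, since any $\sigma$ with $C(\sigma) \geq \alpha|\sigma|+d$ satisfies $C(\sigma) \geq \alpha'|\sigma|+d$ (because $\alpha' \leq \alpha$), and the conclusion $C(\Gamma_i(\sigma)) \geq \beta'|\Gamma_i(\sigma)|-d$ implies $C(\Gamma_i(\sigma)) \geq \beta|\Gamma_i(\sigma)|-d$ (because $\beta' \geq \beta$). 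There is no real obstacle here; the combinatorial content is entirely contained in Theorems~\ref{thm:main-pos} and~\ref{thm:graph-correspondence}, and the corollary is essentially a packaging step together with the passage from arbitrary reals to computable approximations.
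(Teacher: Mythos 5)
Your proposal is correct and follows essentially the same route as the paper's own proof: reduce to rational $\alpha',\beta'$ satisfying the strict inequality via monotonicity of $\EXT(k)$, set $f(n)=\lfloor(k-(k-1)\beta')n\rfloor$, apply Theorem~\ref{thm:main-pos} (trimming to exactly $2^{f(n)}$ edges), and conclude via Theorem~\ref{thm:graph-correspondence}. The only difference is that you spell out the monotonicity argument and the check that $f(n)\geq(\beta'/\alpha')n-O(1)$, both of which the paper leaves implicit.
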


\begin{proof}
We may assume that $\alpha,\beta$ are rational by replacing $\alpha$ by a rational $\alpha'<\alpha$ sufficiently close to $\alpha$ to have $\beta < \frac{k \alpha'}{1 + (k-1)\alpha'}$ and then a rational $\beta'$ between $\beta$ and $ \frac{k \alpha'}{1 + (k-1)\alpha'}$. If we can show that $(\alpha',\beta') \in \EXT(k)$, then it follows that $(\alpha,\beta) \in \EXT(k)$. So from now on, assume that $\alpha,\beta$ are rational.

Let $f(n) = \left \lfloor{(k -(k-1)\beta)n}\right \rfloor$; since $\beta$ is rational, this is computable. By Theorem~\ref{thm:main-pos} there is $d$ and a sequence $(G_n)$ of $k$-hypergraphs such that:
\begin{itemize}
	\item $G_n$ has $2^n$ vertices and at least $2^{[k - (k-1)\beta]n} \geq 2^{f(n)}$ hyperedges, and
	\item every set $U$ of vertices of $G_n$ with $|U| \leq 2^{\beta n}$ has $e(U) < 2^{\alpha [k - (k-1)\beta] n + d} < 2^{\alpha f(n) + (d + 1)}$.
\end{itemize}
Note that we may remove edges from $G_n$ so that it has exactly $2^{f(n)}$ edges while maintaining the other properties. By Theorem~\ref{thm:graph-correspondence} we have that $(\alpha,\beta)$ belongs to $\EXT(k)$.
\end{proof}

\subsection{The negative case: $\beta \geq k\alpha/(1+(k-1)\alpha)$}\label{sec:total-three}

In this section we will show that if $\beta > k\alpha/(1+(k-1)\alpha)$ then $(\alpha,\beta) \notin \EXT(k)$, and moreover, if $k \geq 2$, $\alpha,\beta \in (0,1)$, and $\beta = k\alpha/(1+(k-1)\alpha)$ then $(\alpha,\beta) \notin \EXT(k)$.

It is not hard to see what happens when $\beta > \frac{k\alpha}{1+(k-1)\alpha}$. Essentially, what happens is that the following proposition gives a lower and upper bound on $f(n)$ when $(\alpha,\beta) \in \EXT(k)$ (with the lower bound being that in Remark \ref{rem:lower-bound-for-f}), and then in the following corollary we see that the upper and lower bounds are incompatible when $\beta > k\alpha/(1+(k-1)\alpha)$.

\begin{proposition}\label{prop:controlling-f}
Suppose $\alpha, \beta$ are computable and $(\alpha,\beta)$ belongs to $\EXT(k)$. By Theorem~\ref{thm:graph-correspondence}, let~$d$ be a constant and computable function~$f$ such that $f(n) \geq (\beta/\alpha)n - O(1)$ and $(G_n)$ a sequence of hypergraphs where $G_n$ has $2^n$ vertices, $2^{f(n)}$ hyperedges and the property that for every $U \subseteq G_n$ with $|U| \leq 2^{\beta n}$, $e(U) < 2^{\alpha f(n)+d}$. Then the following inequality holds:
\[ (\beta/\alpha) n - O(1) \leq f(n) \leq \frac{1-\beta}{1-\alpha}kn +O(1),\]
where the $O(1)$ on the right hand side is dependent on $\alpha$ and $d$ as well as $k$.
\end{proposition}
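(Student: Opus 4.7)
The lower bound $f(n) \geq (\beta/\alpha)n - O(1)$ is precisely the content of Remark~\ref{rem:lower-bound-for-f}, so the work is in establishing the upper bound $f(n) \leq \frac{1-\beta}{1-\alpha}kn + O(1)$. My plan is to apply Lemma~\ref{lem:subset-e(U)} directly to the hypergraphs $G_n$ provided by hypothesis: since $G_n$ has $2^n$ vertices and $2^{f(n)}$ hyperedges, its edge pseudo-density is $p = 2^{f(n) - kn}$.

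Taking $u = \lfloor 2^{\beta n} \rfloor$, which is $\geq c_k$ for all sufficiently large $n$ (depending on $\beta$ and $k$), Lemma~\ref{lem:subset-e(U)} produces a vertex subset $U \subseteq V(G_n)$ of size $u \leq 2^{\beta n}$ with
\[
e(U) \;\geq\; 0.99 \, p \, u^k \;\geq\; 0.99 \cdot 2^{f(n) - kn} \cdot 2^{k\beta n - O(1)} \;=\; 0.99 \cdot 2^{f(n) - k(1-\beta)n - O(1)}.
\]
On the other hand, the hypothesis on $G_n$ says that every such $U$ of size $\leq 2^{\beta n}$ satisfies $e(U) < 2^{\alpha f(n) + d}$. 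Combining these two bounds and taking logs yields
\[
f(n) - k(1-\beta)n - O(1) \;<\; \alpha f(n) + d,
\]
so $(1-\alpha) f(n) < k(1-\beta)n + O(1)$, and dividing by $1-\alpha$ (which is positive, since otherwise the statement is vacuous or trivial) gives the claimed upper bound. The $O(1)$ absorbs the constants from Lemma~\ref{lem:subset-e(U)}, the floor, the constant $d$, and the factor $1-\alpha$, as advertised.

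There is no real obstacle here; the only thing to be mindful of is boundary behavior. One should handle the edge cases $\alpha = 1$ (where the lower bound already forces $f(n) \geq n - O(1)$ and the upper bound $(1-\beta)/(1-\alpha) \cdot kn$ is vacuous or infinite, which is fine as an upper bound), and check that $n$ is large enough that $\lfloor 2^{\beta n}\rfloor \geq c_k$ so that the lemma applies — finitely many small $n$ are absorbed into the additive constant. Everything else is a direct two-line chain of inequalities built from Lemma~\ref{lem:subset-e(U)} and the defining property of the hypergraph sequence $(G_n)$.
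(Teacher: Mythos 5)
Your proposal is correct and follows essentially the same route as the paper's own proof: compute the edge pseudo-density $p = 2^{f(n)-kn}$, invoke Lemma~\ref{lem:subset-e(U)} with a vertex subset of size roughly $2^{\beta n}$ to lower-bound $e(U)$, set that against the hypothesis $e(U) < 2^{\alpha f(n)+d}$, take logs, and solve for $f(n)$. The only cosmetic differences are that you cite Remark~\ref{rem:lower-bound-for-f} for the lower bound where the paper simply notes it is part of the stated assumption on $f$, and you add a brief sanity check for the degenerate case $\alpha = 1$; neither changes the substance of the argument.
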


\begin{proof}
$(\beta/\alpha) n - O(1) \leq f(n)$ is part of the assumption on~$f$, so we only need to prove $f(n) \leq kn(1-\beta)/(1-\alpha) +O(1)$.

For all~$n$, the edge pseudo-density of $G_n$ is equal to $p=2^{f(n)}/2^{kn}=2^{f(n)-kn}$. Let $n$ be sufficiently large. By Lemma~\ref{lem:subset-e(U)}, there is a subset $U$ of vertices of $G_n$ such that $2^{\beta n - 1} \leq |U| = 2^{\beta n}$ and $e(U) \geq 0.99 p (2^{\beta n - 1})^k = 0.99\cdot 2^{f(n)-kn} 2^{k \beta n-k} \geq 2^{f(n)-kn + k\beta n-k - 1}$. By assumption on $G_n$, we also have $e(U) < 2^{\alpha f(n) +d}$. Thus:
\[
f(n)-kn + k\beta n - k - 1 < \alpha f(n) + d
\]
This can be rewritten as
\[
f(n) < \frac{kn(1-\beta)}{1-\alpha} + \frac{d+k+1}{1 - \alpha}
\]
as desired.
\end{proof}

As a direct corollary, we get:

\begin{corollary}
If $\beta > k\alpha/(1+(k-1)\alpha)$, then $(\alpha,\beta) \not\in \EXT(k)$.
\end{corollary}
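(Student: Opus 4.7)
The plan is to use the two bounds on $f(n)$ given by Proposition \ref{prop:controlling-f} and observe that they are algebraically incompatible under the assumption $\beta > k\alpha/(1+(k-1)\alpha)$. Since Proposition \ref{prop:controlling-f} requires $\alpha,\beta$ to be computable, I would first reduce the general case to the computable one by a monotonicity plus rational-approximation argument.

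The monotonicity observation is that if $(\alpha,\beta) \in \EXT(k)$ and $\alpha \leq \alpha' \leq 1$, $0 \leq \beta' \leq \beta$, then $(\alpha',\beta') \in \EXT(k)$ is witnessed by the same $f$, $(\Gamma_i)$, and $d$: strengthening the hypothesis $C(\sigma) \geq \alpha|\sigma|+d$ to $C(\sigma) \geq \alpha'|\sigma|+d$ and weakening the conclusion $C(\Gamma_i(\sigma)) \geq \beta|\Gamma_i(\sigma)|-d$ to $C(\Gamma_i(\sigma)) \geq \beta'|\Gamma_i(\sigma)|-d$ preserves the implication. The hypothesis $\beta > k\alpha/(1+(k-1)\alpha)$ forces $\alpha < 1$ and $\beta > 0$, and the function $(x,y) \mapsto y - kx/(1+(k-1)x)$ is continuous, so I can choose rationals $\alpha' \in [\alpha,1)$ and $\beta' \in (0,\beta]$ that still satisfy $\beta' > k\alpha'/(1+(k-1)\alpha')$. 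By monotonicity, $(\alpha',\beta') \in \EXT(k)$, and now Proposition \ref{prop:controlling-f} applies to this computable pair.

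The proposition produces a computable $f$ with $(\beta'/\alpha')n - O(1) \leq f(n) \leq \frac{1-\beta'}{1-\alpha'}kn + O(1)$. A short algebraic manipulation (clearing denominators in $\beta'(1+(k-1)\alpha') > k\alpha'$, rearranging to $\beta'(1-\alpha') > k\alpha'(1-\beta')$, and dividing by $\alpha'(1-\alpha') > 0$) shows that the hypothesis is equivalent to $\beta'/\alpha' > k(1-\beta')/(1-\alpha')$. Hence the linear lower bound on $f(n)$ strictly dominates the linear upper bound for sufficiently large $n$, the desired contradiction. There is no real obstacle in this plan: the heavy lifting is all in Proposition \ref{prop:controlling-f}, and what remains is a routine continuity argument together with one line of algebra.
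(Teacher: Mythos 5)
Your proposal is correct and follows essentially the same route as the paper: reduce to computable (rational) $\alpha',\beta'$ still satisfying the strict inequality via a monotonicity argument, then apply Proposition~\ref{prop:controlling-f} and observe that the resulting lower and upper bounds on $f(n)$ are algebraically incompatible. Your version is slightly more explicit than the paper's in spelling out the monotonicity of $\EXT(k)$ (strengthen the hypothesis on $\sigma$, weaken the conclusion on $\Gamma_i(\sigma)$), but the substance is identical.
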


\begin{proof}
Let $\beta'<\beta$ and $\alpha'>\alpha$ be rationals such that $\beta' > k\alpha'/(1+(k-1)\alpha')$.
The inequality $\beta' > k\alpha'/(1+(k-1)\alpha')$ is equivalent, mutatis mutandis, to $(\beta'/\alpha')>k(1-\beta')/(1-\alpha')$. Therefore, there cannot be a function $f$ such that $(\beta'/\alpha') n - O(1) \leq f(n) \leq kn(1-\beta')/(1-\alpha') +O(1)$, which by Proposition~\ref{prop:controlling-f} shows that $(\alpha',\beta') \notin \EXT(k)$. Since $\alpha > \alpha'$ and $\beta < \beta'$, this shows a fortiori that $(\alpha,\beta) \notin \EXT(k)$.
\end{proof}

The last case we need to treat, which turns out to be more difficult, is when $\beta = k\alpha/(1+(k-1)\alpha)$. In this case, for $k = 1$, we get $\alpha = \beta$ in which case $(\alpha,\beta) \in \EXT(1)$ as witnessed by $\Gamma$ being the identity. For $k \geq 2$, if $\alpha = 1$ or if $\beta = 0$ then taking $\Gamma_1$ to be the identity also works. So we are left with the case $k \geq 2$ and $\alpha,\beta \in (0,1)$. In this case, we will show that  $(\alpha,\beta) \not\in \EXT(k)$. We first prove this result for $\alpha, \beta$ computable, and -- using a different method -- will deal with the case $\alpha, \beta$ uncomputable in the next section (Theorem~\ref{thm:threshold-not-computable}).

\begin{theorem}\label{thm:neg-tot}
Let $k \geq 2$ and suppose that $\alpha,\beta \in (0,1)$  are computable.  If $\beta = k\alpha/(1+(k-1)\alpha)$, then $(\alpha,\beta) \not\in \EXT(k)$.
\end{theorem}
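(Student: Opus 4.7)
The plan is to translate the problem into hypergraph language via Theorem~\ref{thm:graph-correspondence}, apply Proposition~\ref{prop:controlling-f} to pin down $f$, and then derive a contradiction from a sharpened density argument. Assume for contradiction that $(\alpha,\beta)\in\EXT(k)$. By Theorem~\ref{thm:graph-correspondence}(c) (applicable since $\alpha,\beta$ are computable), we obtain a constant $d$, a computable function $f$, and a computable sequence of $k$-hypergraphs $(G_n)$ with $|V(G_n)|=2^n$, $|E(G_n)|=2^{f(n)}$, and $e(U)<2^{\alpha f(n)+d}$ for every $U$ with $|U|\le 2^{\beta n}$. The boundary identity $\beta=k\alpha/(1+(k-1)\alpha)$ is equivalent to $\beta/\alpha=k(1-\beta)/(1-\alpha)$, which collapses the two bounds in Proposition~\ref{prop:controlling-f} into the tight equality $f(n)=(\beta/\alpha)n+O(1)$; write $C_n:=f(n)-(\beta/\alpha)n$ for the uniformly bounded discrepancy.

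Next, apply Lemma~\ref{lem:subset-e(U)} at $u=\lfloor 2^{\beta n}\rfloor$. Since $u\to\infty$, the factor $(1-k/u)^k$ in its proof tends to $1$, producing for sufficiently large $n$ a set $U$ with $|U|\le 2^{\beta n}$ and $e(U)\ge(1-o(1))pu^k$, where $p=2^{f(n)-kn}$ is the edge pseudo-density. A short calculation using the boundary identity yields $pu^k=2^{\alpha f(n)+(1-\alpha)C_n}$, so the hypergraph-density condition forces $(1-\alpha)C_n<d+o(1)$. This pins $C_n$ in a fixed bounded interval but does not yet produce a contradiction---the two sides of the averaging inequality match up to additive constants, which is precisely the reason the boundary is the threshold.

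To close the gap, the plan is to leverage the tightness across many scales by iterating the extractor, aiming to violate the strict negative result (the corollary preceding this theorem) that $(\alpha,\beta')\notin\EXT(k')$ when $\beta'>k'\alpha/(1+(k'-1)\alpha)$. The natural route is composition: chain $r$ applications of the $\Gamma_i$'s to build a candidate extractor with $k^r$ functions, mapping inputs of length $f^{(r)}(n)$ to outputs of length $n$. A naive composition lands exactly at the $k^r$-boundary (since $k\alpha/(1+(k-1)\alpha)$ iterated is $k^r\alpha/(1+(k^r-1)\alpha)$), so the goal is to show that the $o(1)$ slack in each application of Lemma~\ref{lem:subset-e(U)} compounds across the $r$ scales into a strict super-constant improvement, producing an extractor with parameters just above the $k^r$-boundary.

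The hardest step is making this compounding argument rigorous: a single application of Lemma~\ref{lem:subset-e(U)} loses only a constant factor, and it is not a priori clear that $r$ compositions yield super-constant improvement rather than merely reabsorbing into the $O(1)$ terms. I expect this requires a careful amortized analysis that exploits both the computability of $(G_n)$ and $\alpha,\beta$ (so that no additional $O(\log n)$ terms of Proposition~\ref{prop:small-ce-sets} enter the bookkeeping, as they would in the uncomputable case handled separately) and the uniformity of condition~(c) over all small subsets, perhaps by deriving a dyadic-scale strengthening of Lemma~\ref{lem:subset-e(U)}.
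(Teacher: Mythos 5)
Your setup is correct and matches the paper's: translate to hypergraphs via Theorem~\ref{thm:graph-correspondence}, use Proposition~\ref{prop:controlling-f} and the boundary identity to pin $f(n)=(\beta/\alpha)n+O(1)$, and observe that a single application of Lemma~\ref{lem:subset-e(U)} produces a set $U$ with $e(U)$ matching the allowed upper bound $2^{\alpha f(n)+d}$ only up to a constant factor, hence no contradiction. You correctly identify this as the crux.

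The proposed way to close the gap, however, does not work. Composing the extractor with itself $r$ times (or, dually, multiplying the hypergraphs) lands \emph{exactly} on the $k^r$-boundary, as you note; and the ``$o(1)$ slack'' in Lemma~\ref{lem:subset-e(U)} is not a resource that can be compounded. That $o(1)$ arises because a random $u$-subset captures only a fraction $(1-k/u)^k$ of the expected edges; it is a \emph{loss} relative to the ideal pseudo-density, not a surplus you can harvest. Iterating it can only degrade the density estimate, never push you strictly above the boundary. So the proposed amortized/compounding argument has no source of gain, and I do not see how to repair it along these lines. This is a genuine gap, not a presentational one.

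The paper takes a genuinely different route at precisely this point. After reducing to the statement that $G_n$ has $2^n$ vertices, $2^{[k-(k-1)\beta]n-O(1)}$ hyperedges, and $e(U)<2^{\beta n+d}$ for all $|U|\le 2^{\beta n}$, it proves a standalone combinatorial lemma (Lemma~\ref{lem:seq}) asserting that no such sequence of $k$-hypergraphs exists for $k\ge 2$. The proof is by induction on $k$. The base case $k=2$ is the real work: one removes the $2^{\beta n-1}$ highest-multiplicity pairs, collapses multi-edges, passes to a balanced bipartite subgraph, then splits into two cases depending on whether the $2^{\beta n-1}$ highest-degree left vertices carry $\ge 2^{n+D+1}$ edges. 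In the ``spread'' case a random right-set $B$ is chosen with density $2^{(\beta-1)n-3}$, and the Poisson limit theorem is used to show a positive fraction of the moderately high-degree left vertices have $e(x,B)\ge 2^{D+1}$, which builds the required dense $U$. The inductive step passes from $k$ to $k-1$ by randomly choosing a set $A_n$ of size $\approx 2^{\beta n}$, restricting to hyperedges meeting $A_n$, and projecting them to $(k-1)$-hyperedges. None of this is an averaging or composition argument; the gain at the boundary comes from the multiplicity/bipartite/Poisson structure, which your sketch does not engage with.

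In short: the reduction and the diagnosis of why the naive density argument stalls are right, but the proposed fix (iterated composition) cannot produce the needed strict improvement, and the actual proof requires the separate combinatorial Lemma~\ref{lem:seq}.
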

\begin{proof}
For the sake of contradiction, assume that $(\alpha,\beta) \in \EXT(k)$, and let~$d$ be a constant, $f$ a computable function such that $f(n) \geq (\beta/\alpha)n - O(1)$, and $(G_n)$ a sequence of hypergraphs where $G_n$ has $2^n$ vertices, $2^{f(n)}$ hyperedges and the property that for every $U \subseteq G_n$ with $|U| \leq 2^{\beta n}$, $e(U) < 2^{\alpha f(n)+d}$. By Proposition~\ref{prop:controlling-f}, we must have $(\beta/\alpha) n - O(1) \leq f(n) \leq kn(1-\beta)/(1-\alpha) +O(1)$, but $\beta = k\alpha/(1+(k-1)\alpha)$ implies $\beta/\alpha=k(1-\beta)/(1-\alpha)$, so we get a precise expression for the function~$f$, namely
\begin{equation}
f(n) =(\beta/\alpha)n + O(1)=(k-(k-1)\beta)n + O(1)=(k/(1+(k-1)\alpha))n +O(1).
\end{equation}
The $O(1)$ depends on $\alpha$ and $d$ as well as $k$.
From this, we can rewrite the property of $G_n$ as follows, for a possibly different value of $d$:
\begin{equation}
\text{for every~ } U \subseteq G_n \text{~ with~ } |U| \leq 2^{\beta n},~ e(U) < 2^{\beta n+d}.
\end{equation}

Note that if $k = 1$, then such a sequence of graphs $G_n$ does exist. The key to finish the proof is the following combinatorial lemma which says that such a sequence does not exist for $k \geq 2$.

\begin{lemma}\label{lem:seq}
Let $k \geq 2$ and $\beta \in (0,1)$. Let $(G_n)$ be a sequence of $k$-hypergraphs such that $G_n$ has $2^n$ vertices and $2^{[k-(k-1)\beta]n-O(1)}$ hyperedges. For any constant~$D$, there is an~$n$ and a subset $U$ of vertices of $G_n$ with $|U| \leq 2^{\beta n}$ and $e(U) \geq 2^{\beta n + D}$.
\end{lemma}

\begin{proof}
We prove this result by induction over~$k$.\\

\noindent \textbf{Base case: $k=2$}. In this case the $G_n$ are just binary multigraphs, but remember that there can be multiple edges between two vertices. Fix a constant~$D$. We begin by removing some edges from the $G_n$ to give them a simpler structure while preserving the hypotheses of the theorem. For each pair $\{x,y\}$ of vertices of $G_n$, recall that $e(\{x,y\})$ is the number of edges between $x$ and $y$. Let $P$ be the set of pairs $\{x,y\}$ that have the $\left\lfloor{2^{\beta n-1}}\right\rfloor$ biggest values of $e(\{x,y\})$, and $U_{max} = \bigcup_{\{x,y\} \in P} \{x,y\}$. Note that $|U_{max}| \leq 2|P| \leq 2^{\beta n}$. If $e(U_{max}) = \sum_{\{x,y\} \in P} e(\{x,y\})$ is greater or equal to $2^{\beta n +D}$, we are done, so we may assume this quantity to be  $< 2^{\beta n +D}$. Observe that this means that $\sum_{\{x,y\} \in P} e(\{x,y\}) < 2^{\beta n +D}$, so by the pigeonhole principle, there is some $\{x,y\} \in P$ such that $e(\{x,y\}) < 2^{\beta n +D}/ \left\lfloor{2^{\beta n-1}}\right\rfloor \leq 2^{D+2}$. By definition of~$P$, this shows that $e(\{x,y\}) < 2^{D+2}$ for any $\{x,y\} \notin P$.

Now we remove from $G_n$ the edges in $E(U_{max})$, and the resulting multigraph will still have at least $2^{(2-\beta)n-O(1)} - 2^{\beta n +D}$ edges, which is still $2^{(2-\beta)n-O(1)}$ since $\beta < 1$. The $O(1)$ constant depends on~$D$, but this will not cause any problems.

Moreover, as we saw, between any two vertices in the resulting graph there are at most $2^{D+2}$ edges. So we may collapse all edges between any pair of vertices into one edge, which will divide the number of edges by at most $2^{D+2}$, and thus we will still have $2^{(2-\beta)n-O(1)}$ edges in the resulting graph, which will now have at most one edge between any two vertices. Thus we have obtained a graph rather than a multigraph.

Next, we make the graph bipartite with two sides of equal size, while keeping at least $1/5$ of the edges. This can be done because if we choose a partition of the vertices into two sets of size $2^{n-1}$ at random among all partitions, the probability for a given edge to have one coordinate on each side is $1/4-o(1)$. Thus, there must exist some fixed partition which splits the graph into two parts and has the property that a fraction $1/4 - o(1)$ of the edges go from one side to the other. We remove from our graph the edges which do not have a coordinate on each side. The graph is now bipartite and still has $2^{(2-\beta)n-O(1)}$ edges.

We have thus obtained a new sequence of subgraphs $G'_n$ of $G_n$ where $G'_n$ has the same vertices as $G_n$, $2^{(2-\beta)n-O(1)}$ edges, has at most one edge between any two vertices, and is bipartite with two sides $L_n$ and $R_n$ (for `left' and `right') of size $2^{n-1}$ each. We will now try to find some~$n$ and some subset $U$ of vertices of $G'_n$ of size at most $2^{\beta n}$ and such $e(U)\geq 2^{\beta n +D}$ (inside $G'_n$, and thus inside $G_n$ as well). From now on, we work inside the~$G'_n$.

For all~$n$, we need to distinguish two cases, corresponding to whether or not a lot of edges are concentrated on a small amount of vertices. For all~$n$, let $A_n$ be the set of $\left\lfloor{2^{\beta n-1}}\right\rfloor$ vertices~$x$ in $L_n$ that have the largest values $e(x,R_n)$. Our two cases are as follows.

\medskip{}

\textit{Case 1: $\sum_{x \in A_n} e(x,R_n) \geq 2^{n + D + 1}$}. In this case, we claim that there is $B_n \subseteq R_n$ of size at most $2^{\beta n -1}$ such that $e(A_n,B_n) \geq 2^{\beta n + D}$. Indeed, let $B_n \subseteq R_n$ be the $\left\lfloor{2^{\beta n - 1}}\right\rfloor$ nodes $y$ from $R_n$ with the largest values of $e(A_n,y)$. We have that $\sum_{y \in R_n} e(A_n,y) = \sum_{x \in A_n} e(x,R_n) \geq 2^{n + D + 1}$, and so
\[ e(A_n,B_n) \geq \left\lfloor{2^{\beta n - 1}}\right\rfloor \frac{2^{n + D + 1}}{2^{n-1}} \geq 2^{\beta n + D}.\]
This is what we wanted.

\medskip{}

\textit{Case 2: $\sum_{x \in A_n} e(x,R_n) < 2^{n + D + 1}$}. Our first step is to find a large subset $Q_n$ of $L_n$ such that for each $x \in Q_n$, $e(x,R_n)$ is reasonably large. To begin, note that there must be some $x \in A_n$ such that $e(x,R_n) < 2^{n+D-\beta n + 3}$. By definition of $A_n$, this implies
\begin{equation}\label{eq:Ln-An}
e(x,R_n) < 2^{(1-\beta) n +D + 3}~ \text{ for all ~ }  x \in L_n \setminus A_n.
\end{equation}
Note that $|L_n \setminus A_n| \geq 2^{n-1}-2^{\beta n -1} \geq 2^{n-2}$, and $\sum_{x \in L_n \setminus A_n} e(x,R_n) \geq 2^{(2-\beta)n - O(1)} - 2^{n + D + 1}$ so, calling $\delta(n)$ the average value of $e(x,R_n)$ over $x \in L_n \setminus A_n$, we have $\delta(n)=2^{(1-\beta)n-O(1)}$. Here, and for the remainder of this base case, $O(1)$ will depend on $D$.

Let $Q_n = \{x \in L_n \setminus A_n \mid e(x,R_n) \geq \delta(n)/2\}$. We claim that
\begin{equation}
|Q_n| \geq 2^{n - O(1)}
\end{equation}
Indeed,
\begin{eqnarray*}
\sum_{x \in L_n \setminus A_n} e(x,R_n) & \leq & \Big(|L_n \setminus A_n| - |Q_n|\Big) \delta(n)/2 + |Q_n| \cdot 2^{(1-\beta) n + O(1)} \\
 & \leq & |L_n \setminus A_n| \cdot \delta(n)/2 + |Q_n| \cdot 2^{(1-\beta) n+O(1)}
 \end{eqnarray*}
(the first inequality is a consequence of~\eqref{eq:Ln-An}), and since $\sum_{x \in L_n \setminus A_n} e(x,R_n) = |L_n \setminus A_n| \cdot \delta(n)$ (by definition of $\delta(n)$), this yields
\begin{eqnarray*}
|Q_n| & \geq & 2^{(\beta - 1)n - O(1)} \cdot \frac{1}{2} \sum_{x \in L_n \setminus A_n} e(x,R_n) \\
 & \geq & 2^{(\beta - 1)n - O(1)} \cdot 2^{(2-\beta) n - O(1)}\\
 & \geq & 2^{n - O(1)}
\end{eqnarray*}
 as desired.

 Suppose now that we were to choose a subset $B \subseteq R_n$ at random by putting each $y \in R_n$ into~$B$ with probability $2^{(\beta-1) n - 3}$ independently of the other vertices of $R_n$. The expected value of $|B|$ is $2^{n-1} \cdot 2^{(\beta-1) n - 3}=2^{\beta n -4}$. The Chernoff bound shows that
 \[
 \mathbb{P}\Big(|B| \geq 2^{\beta n -1}\Big) < 2^{-2^{\beta n -1}}
 \]
for sufficiently large $n$. In particular,  $\mathbb{P}\Big(|B| < 2^{\beta n -1}\Big) = 1-o(1)$.
 Furthermore, consider a fixed $x \in Q_n$. Recall that this means $e(x,R_n) = 2^{(1-\beta) n- O(1)}$ (the $O(1)$ constant depending on~$D$). The key point is to evaluate the distribution of $e(x,B)$ when $B$ is chosen randomly. For this, we use the Poisson limit theorem (a.k.a.\ law of rare events):

 \begin{theorem}[Law of rare events]
If we have $N$ $\{0,1\}$-valued independent random variables $X_1, \ldots, X_N$ where $X_i$ is equal to $1$ with probability $\lambda/N$, then the distribution of $\sum_i X_i$ converges, as $N \rightarrow \infty$, to the Poisson distribution of parameter $\lambda$ (which is the distribution over $\N$ where $K$ has probability $(\lambda^K e^{-\lambda}) / K!$).
 \end{theorem}

This is exactly the situation of $e(x,B)$, which is the sum of $2^{(1-\beta)n - O(1)}$ binary random variables (whether or not each of the edges emanating from $x$ will have their other vertex included in $B$), each of which has probability $2^{(\beta - 1)n - O(1)}$ to be equal to~$1$. So we have $N = 2^{(1-\beta)n - O(1)}$ and $\lambda = 2^{\Omega(1)} > 0$. Therefore, for sufficiently large $n$, there is an $\epsilon > 0$ such that
 \[
 \mathbb{P}\Big(e(x,B) \geq 2^{D+1}\Big) \geq \mathbb{P}\Big(e(x,B) = 2^{D+1}\Big) = (\Omega(1) e^{-\Omega(1)}) / 2^{D+1} > \varepsilon
 \]
 Thus, when $B$ is chosen randomly as above, the expected value of $|\{x \in Q_n \mid e(x,B) \geq 2^{D+1}\}|$ is $\geq \varepsilon |Q_n| \geq \varepsilon \cdot 2^{n-O(1)}$. For $n$ large enough, this is greater than $2^{\beta n -1}$ as $\beta < 1$, and so for $n$ large enough, there exists a set $B_n \subseteq R_n$ of size $\left \lfloor {2^{\beta n-1}}\right\rfloor$ such that
 \[
 |\{x \in Q_n \mid e(x,B_n) \geq 2^{D+1}\}| \geq 2^{\beta n -1}
 \]
Thus, we can take a subset $Q'_n$ of $Q_n$ of size $\left\lfloor{2^{\beta n -1}}\right\rfloor$ such that $e(x,B_n) \geq 2^{D+1}$ for all~$x \in Q'_n$, and set $U=Q'_n \cup B_n$. We have $|U|\leq2^{\beta n-1}+2^{\beta n-1}=2^{\beta n}$ and $e(U) \geq 2^{D+1} |Q'_n| \geq 2^{\beta n +D}$. This is what we wanted.

This concludes the base case $k=2$.

\medskip{}

\noindent \textbf{Induction step}. Suppose now $k>2$ and that the theorem holds for $k-1$. We have a sequence of $k$-hypergraphs $(G_n)$ where $G_n$ has $2^n$ vertices and $2^{[k-(k-1)\beta]n-O(1)}$ hyperedges, and we fix a large constant $D$.

To reduce the problem to $(k-1)$-hypergraphs, we once again use the probabilistic method. For each~$n$, if we select at random a set $A$ of size $\left\lfloor{2^{\beta n}}\right\rfloor$, and let $F$ be the set of hyperedges that have at least one component in~$A$, the probability that a given hyperedge of $G_n$ belongs to $F$ is, for $n$ much larger than $k$,
\begin{eqnarray*}
1 - \frac{{2^n-k \choose{\left\lfloor{2^{\beta n}}\right\rfloor}}}{{2^n \choose{\left\lfloor{2^{\beta n}}\right\rfloor}}} &=&
1 - \frac{(2^n - k)! (2^n - \left\lfloor{2^{\beta n}}\right\rfloor)!}{(2^n)!(2^n - k - \left\lfloor{2^{\beta n}}\right\rfloor)!}\\ &=&
1 - \frac{2^n - k}{2^n} \cdots \frac{2^n - k - \left\lfloor{2^{\beta n}}\right\rfloor + 1}{2^n - \left\lfloor{2^{\beta n}}\right\rfloor + 1} \\&=&
1 - \frac{(2^n - \left\lfloor{2^{\beta n}}\right\rfloor) \cdots (2^n - k - \left\lfloor{2^{\beta n}}\right\rfloor + 1)}{(2^n) \cdots (2^n - k + 1)} \\ & \geq &
1 - \left(\frac{2^n - 2^{\beta n - 1}}{2^n}\right)^k \\
&=& 1 - (1 - 2^{(\beta -1)n - 1})^k \\
&\geq& \frac{k}{2} 2^{(\beta - 1) n} - O\left(2^{2(\beta - 1)n}\right)\\
&\geq& \frac{k}{4} 2^{(\beta - 1) n}.
\end{eqnarray*}
We use the fact that $n$ is much larger than $k$ in the first line and in the last two lines. Thus
\[
\mathbb{E}(|F|) \geq \frac{k}{4}  \cdot 2^{(\beta - 1)n} \cdot 2^{[k-(k-1)\beta]n-O(1)} =  2^{[(k-1)-(k-2)\beta]n-O(1)}
\]
We can therefore choose for each~$n$ a subset $A_n$ of size $\left\lfloor{2^{\beta n}}\right\rfloor$ such that the corresponding sequence of $F_n$ is such that $|F_n|=2^{[(k-1)-(k-2)\beta]n-O(1)}$.

Now, for each~$n$, for each $k$-hyperedge $e\in F_n$, consider the $(k-1)$-hyperedge $e'$ obtained by removing from $e$ the coordinate that belongs to $A_n$, or one of those coordinates if there are several. Let $F'_n$ be the set of $(k-1)$-hyperedges obtained in this fashion. This operation does not change the cardinality so $|F'_n|=2^{[(k-1)-(k-2)\beta]n-O(1)}$. Let $H_n$ be the $(k-1)$-hypergraph whose vertices are the same as those of $G_n$ and whose set of hyperedges is~$F'_n$.

We can now apply our induction hypothesis at level $(k-1)$ to the sequence $(H_n)$ and constant $(D+k+1)$, to get some~$n$ and some subset $B_n$ of vertices of $H_n$ such that $|B_n| \leq 2^{\beta n}$ and $e_{H_n}(B_n) \geq 2^{\beta n + D + k+1}$.

Observe that $e_{G_n}(A_n \cup B_n) \geq e_{H_n}(B_n)$. Indeed, if a $(k-1)$-hyperedge $e' \in F'_n$ has all its coordinates in $B_n$, the $k$-hyperedge $e$ of $G_n$ it came from has $(k-1)$ coordinates in $B_n$, and one coordinate in $A_n$, hence all its coordinates are in $A_n \cup B_n$. Thus $e_{G_n}(A_n \cup B_n) \geq 2^{\beta n + D+k+1}$. And since $|A_n \cup B_n| \leq |A_n| + |B_n| \leq 2^{\beta n +1}$, by Lemma~\ref{lem:subset-e(U)}, there is a subset $U$ of $A_n \cup B_n$ of size $|A_n \cup B_n|/2 \leq 2^{\beta n}$ such that $e_{G_n}(U) \geq 0.99 \cdot 2^{-k} \cdot e_{G_n}(A_n \cup B_n) \geq 2^{\beta n +D}$. The set~$U$ is as wanted, and this concludes the induction step.

This completes the proof of Lemma \ref{lem:seq} and thus of the theorem.
\end{proof}
\renewcommand{\qedsymbol}{}\end{proof}

\section{\texorpdfstring{Characterization of $\EXTp$}{The Partial Case}}

For the partial case, we immediately inherit all of the positive results from the total case as $\EXT(k) \subseteq \EXTp(k)$. It is not hard to see that $\EXT(1)$ and $\EXTp(1)$ are the same. To see this, it suffices to show that $\EXTp(1) \subseteq \EXT(1)$ as we already know that $\EXT(1) \subseteq \EXTp(1)$. If $(\alpha,\beta) \in \EXTp(1)$ as witnessed by $\Gamma$, $f$, and $d$, with $\Gamma$ partial, then define $\Phi(\sigma)$ to be either $\Gamma(\sigma)$ or the all zeros string, depending on whether we find out first that $\Gamma(\sigma)$ converges or that $C(\sigma) < \alpha|\sigma| + d$. Note that $\Phi$ is total as $\Gamma$ is defined on all $\sigma$ with $C(\sigma) \geq \alpha|\sigma| + d$ and so witnesses that $(\alpha,\beta) \in \EXT(1)$. So for the remainder of this section, we can consider only the case $k \geq 2$.

In the previous section, we showed that if $(\alpha,\beta) \in \EXT(k)$ as witnessed by $f$, then
\[ (\beta/\alpha) n - O(1) \leq f(n) \leq \frac{1-\beta}{1-\alpha}kn +O(1) \]
and moreover, that $\beta / \alpha \leq \frac{1-\beta}{1-\alpha}k$ was equivalent to $\beta \leq \frac{k \alpha}{1+(k-1)\alpha}$. As $\EXT(k) \subset \EXTp(k)$, we know that $(\alpha,\beta) \in \EXTp(k)$ when $\beta < \frac{k \alpha}{1+(k-1)\alpha}$, when $\beta = 0$, or when $\alpha = 1$. In this section we consider the case when $\beta \geq \frac{k \alpha}{1+(k-1)\alpha}$ to see if any such pairs $(\alpha, \beta)$ belong to $\EXTp(k)$. We will show (Theorem \ref{prop:sqrt-deviation}) that for such $(\alpha, \beta)$, if $(\alpha,\beta) \in \EXTp(k)$, then
\[
(\beta/\alpha) n - O(1) \leq f(n) \leq \frac{k}{1+(k-1)\alpha} n + \sqrt{n} + O(1).
\]
From this we get that $\beta \leq \frac{k \alpha}{1+(k-1)\alpha}$, and so (Corollary \ref{cor:partial-not}) if $\beta > \frac{k \alpha}{1+(k-1)\alpha}$ then $(\alpha,\beta) \notin \EXTp(k)$.

This leaves the case $\beta = \frac{k \alpha}{1+(k-1)\alpha}$. This case will depend on whether or not $\alpha$ and $\beta$ are computable.

To prove Theorem \ref{prop:sqrt-deviation}, we will use the following lemma.

\begin{lemma}\label{lem:negative-partial-2}
Let $(D_n)$ be a computable sequence of finite sets of strings, and $\Gamma_1, \ldots, \Gamma_k$ be partial computable functions from $\fs$ to $\fs$, such that $\Gamma_i(D_n) \subseteq \{0,1\}^n$ for all $i,n$. Let $\varphi: \N \rightarrow \N$ be a function such that $\varphi(n) \leq n$ for all~$n$ (we do not assume that $\varphi$ is computable). There is a constant $d$ such that for all~$n$, there is some $x \in D_n$ such that:
\begin{itemize}
	\item $C(x) > \log|D_n| -(n-\varphi(n)+1)k - d$, and
	\item for every $i \leq k$, either $\Gamma_i(x)$ is undefined, or $C(\Gamma_i(x)) < \varphi(n) + 2C(\varphi(n),n)+d$.
\end{itemize}
\end{lemma}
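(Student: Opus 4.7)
Fix $n$ and set $c_0 = \varphi(n) + 2C(\varphi(n),n) + d$ and $c_1 = \log|D_n| - (n-\varphi(n)+1)k - d$. The plan is to find $x \in D_n$ with $C(x) > c_1$ whose every defined image $\Gamma_i(x)$ has complexity less than $c_0$. The very shape of the bound --- in particular the $2C(\varphi(n),n)$ summand --- signals that Proposition~\ref{prop:small-ce-sets} is the main engine: I will build a c.e.\ family parametrized uniformly by $(n, \varphi(n))$ and read off complexity bounds from the size of an appropriate member of that family, with the $2C(\varphi(n),n)$ term arising precisely as the cost of specifying the parameter $\varphi(n)$ given $n$.

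My approach is descriptional and proceeds by contradiction. Assume no such $x$ exists, so every $x \in D_n$ with $C(x) > c_1$ admits some index $i$ with $\Gamma_i(x)\downarrow$ and $C(\Gamma_i(x)) \geq c_0$. I would use this hypothesis to produce short descriptions of every $x \in D_n$ whose defined images all have low complexity, and then contrast the resulting count against the fact that $D_n$ has more than $2^{c_1+1}$ elements of complexity above $c_1$. Concretely, for each subset $S \subseteq \{1, \ldots, k\}$ (the guessed indices where $\Gamma_i(x)$ is meant to be undefined) and each tuple of short programs $\vec{p} = (p_i)_{i \notin S}$ with $|p_i| < c_0$, enumerate the c.e.\ fiber $F(S, \vec{p}) = \{x \in D_n : \Gamma_i(x) \downarrow = U(p_i) \text{ for all } i \notin S\}$. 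Every $x \in D_n$ satisfying the second condition of the lemma lies in at least one such fiber, namely the one tagged by its true pattern of undefinedness and by short programs for its actual defined images.

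The main obstacle I anticipate is the accounting for partial $\Gamma_i$: the condition $\Gamma_i(x)\uparrow$ is $\Pi^0_1$, so one cannot directly enumerate the ``good'' set $A_n$. My plan is to accept the overcounting introduced by guessing the pattern $S$: summing over the $2^k$ possible $S$ costs only a constant factor, absorbed by $d$. The sizes of the fibers $F(S, \vec{p})$ themselves will be controlled by a pigeonhole on the tuples $\vec{p}$ ranging over $(\{0,1\}^{<c_0})^{|[k] \setminus S|}$, using $|L_n| \leq 2^{c_0}$ for $L_n = \{y \in \{0,1\}^n : C(y) < c_0\}$; the clean bookkeeping here is the delicate combinatorial core of the proof.

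With the fiber enumeration in hand, Proposition~\ref{prop:small-ce-sets}, applied to the uniformly c.e.\ family indexed by $(n, S, \vec{p})$ and with parameter $\varphi(n)$, bounds $C(x)$ for every $x$ in any $F(S, \vec{p})$ by $\log|F(S, \vec{p})| + 2C(\varphi(n), n) + O(\log n + k c_0)$. A careful summation shows that this is at most $c_1$ for $d$ taken sufficiently large, so every $x \in D_n$ whose defined images are all of complexity less than $c_0$ satisfies $C(x) \leq c_1$. The contrapositive of this, together with the existence of more than $2^{c_1+1}$ elements of $D_n$ of complexity above $c_1$, furnishes the desired $x$ satisfying both conclusions of the lemma, with a single constant $d$ working uniformly in $n$.
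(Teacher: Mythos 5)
The direction of your argument is inverted, and this sinks the proof. The lemma asserts the existence of a \emph{high}-complexity $x$ all of whose defined images are low-complexity. Your plan is to produce short descriptions of every $x$ whose defined images are low-complexity, establishing $C(x)\leq c_1$ for all such $x$. But notice that this, if it were correct, would be the \emph{negation} of the lemma: it says precisely that no $x$ satisfying the second bullet can satisfy the first. Your closing step --- passing to the contrapositive and combining it with the existence of elements of $D_n$ of complexity above $c_1$ --- produces an $x$ with $C(x)>c_1$ and some defined $\Gamma_i(x)$ of complexity $\geq c_0$, which is the exact opposite of what is wanted. There is no contradiction being extracted anywhere; the ``assume no such $x$ exists'' framing is window dressing, since the fiber construction never uses that assumption.

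The correct strategy runs the other way, and this is what the paper does: build a large, effectively locatable subset $E_n\subseteq D_n$ together with a \emph{single} uniformly c.e.\ set $B\subseteq\{0,1\}^n$ of size at most $k\cdot 2^{\varphi(n)}$ such that every defined $\Gamma_i(x)$ for $x\in E_n$ lands in $B$. Then the low-complexity bound on the images comes from Proposition~\ref{prop:small-ce-sets} applied to $B$ (enumerable from the pair $(\varphi(n),n)$), not from specifying programs for the images; and the high-complexity $x$ comes from Fact~\ref{fact:measureC} applied to $E_n$, whose size is pushed down from $|D_n|$ by at most a factor $2^{(n-\varphi(n)+1)k}$ via a greedy $k$-round pigeonhole (at round $i$, either half of the current set falls outside $\dom(\Gamma_i)$, or one restricts to the preimage of the $2^{\varphi(n)}$ heaviest fibers of $\Gamma_i$). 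This sidesteps entirely the problem you would face even on your own terms: indexing your fibers by tuples $\vec p\in(\{0,1\}^{<c_0})^{|[k]\setminus S|}$ costs about $k\,c_0\approx k\varphi(n)$ bits just to name the fiber, which dwarfs the $(n-\varphi(n)+1)k$ slack that $c_1$ allows, and in the extreme case $S=\{1,\dots,k\}$ the fiber is all of $D_n$ and the bound is vacuous. In short: the lemma is a ``large c.e.-ish set contains an incompressible element'' statement, not a ``compress everything in sight'' statement, and your proposal is aimed at the wrong pole.
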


\begin{proof}
We will show that there is a subset $E_n$ of $D_n$ such that
\begin{itemize}
\item $|E_n| \geq |D_n| \cdot 2^{-(n-\varphi(n)+1)k}$
\item for every $x \in E_n$, for every $i \leq k$, either $\Gamma_i(x)$ is undefined, or $C(\Gamma_i(x)) < \varphi(n) + 2C(\varphi(n),n)+d$.
\end{itemize}
Then, since any set of string of cardinality $\geq 2^s$ contains an element of Kolmogorov complexity at least $s$ (Fact \ref{fact:measureC}), there is $x \in E_n$ with $C(x) > \log|D_n| -(n-\varphi(n)+1)k - d$.

The functionals $\Gamma_1, \ldots, \Gamma_k$ play symmetric roles, so we can assume that for all $x$ and $i<j$, $\Gamma_j(x)$ can only converge if $\Gamma_i(x)$ does. Indeed, let $\Gamma'_i(x)$ be the $i$-th element that appears in the uniform enumeration of the c.e.\ set $\{\Gamma_i(x) \mid i \leq k\}$, if such an element appears. The $\Gamma'_i$ are as desired and replacing each $\Gamma_i$ by $\Gamma'_i$ does not change the truth value of the statement of the proposition.

Let us now fix an~$n$. Consider the following algorithm, which is uniform in $n$ and $\varphi(n)$, but not necessarily in $n$ alone. Set $A_0=D_n$. For every $i$ from $1$ to~$k$, do the following:
\begin{enumerate}
\item Enumerate $\dom(\Gamma_i) \cap A_{i-1}$ until we see at least $|A_{i-1}|/2$ elements being enumerated. If this happens, move on to Step 2 (otherwise we wait forever at this stage).
\item Let $\Phi_i$ be the (total) restriction of $\Gamma_i$ to these $\geq |A_{i-1}|/2$ elements of $A_{i-1}$.
\item Let $B_i$ be the set consisting of the $2^{\varphi(n)}$ strings~$y \in \{0,1\}^n$ that have the largest $2^{\varphi(n)}$ values of $|\Phi_i^{-1}(y)|$ among strings of length~$n$.
\item Set $A_i = \Phi_i^{-1}(B_i)$.
\item If $i<k$, increase $i$ by $1$ and start the loop again.
\end{enumerate}

Let $j$ be the index of the last loop that is completed, and let $B = \bigcup_{i=1}^j B_i$. $B$ is c.e.\ uniformly given $n$ and $\varphi(n)$ as parameters. Let us make several easy observations about the sets $A_i$ and $B_i$.
\begin{itemize}
	\item By construction, $A_0 \supseteq A_1 \supseteq \ldots \supseteq A_j$.
	\item Again by construction, $\Gamma_i(A_i) \subseteq B_i$ for all $i$, so $\Gamma_i(A_j) \subseteq B_i$ for all $i$, which in turn implies $\Gamma_i(A_j) \subseteq B$ for all~$i$.
	\item Each set $B_i$ has cardinality $2^{\varphi(n)}$, so $B$ has cardinality at most $k \cdot 2^{\varphi(n)}$.
	\item For all $i>0$, we have $|A_i| \geq 2^{\varphi(n)-n-1} \cdot |A_{i-1}|$ when $A_i$ is defined. Indeed, $\Phi_i$ is a function from a set of size at least $|A_{i-1}|/2$ to a set of size $2^n$, so the average value of $|\Phi_i^{-1}(y)|$ is at least $2^{-n} \cdot |A_{i-1}|/2$. If we take the $2^{\varphi(n)}$ greatest such values, their sum, which is the cardinality of $A_i$ by definition, is at least $2^{\varphi(n)} \cdot 2^{-n} \cdot |A_{i-1}|/2$, as desired. By induction, this tells us that $|A_i| \geq 2^{(\varphi(n)-n-1)i}|A_0|$ when $A_i$ is defined.
\end{itemize}
Let us now build the advertised set $E_n$. There are two cases. If $j=k$ (all loops of the algorithm are performed), simply let $E_n=A_j$. If $j<k$, let $E_n = A_j \setminus \dom(\Gamma_{j+1})$. Note that the set $E_n$ is not computable or even c.e.\ in $n$, but this will not matter.

In the first case, we have $|E_n|=|A_k| \geq 2^{(\varphi(n)-n-1)k}|A_0|$ by the above calculation, and since $A_0=D_n$ this is what we want. In the second case ($j<k$), by definition of~$j$, the algorithm must get stuck at Step 1 of the $j+1$-th loop, that is, we must have $|\dom(\Gamma_{j+1})\cap A_j| \leq |A_j|/2$, so $|E_n| \geq |A_j|/2 \geq 2^{(\varphi(n)-n-1)j-1}|A_0| \geq 2^{(\varphi(n)-n-1)k}|A_0|$ (for the last inequality, we use the fact that $j<k$ and $\varphi(n) \leq n$).

So in either case, we have
\[
|E_n|\geq 2^{(\varphi(n)-n-1)k}|D_n|
\]
Moreover, the definition of $E_n$ ensures that for any $x \in E_n$, $\Gamma_i(x)$ is defined and belongs to $B$ if $i \leq j$, and $\Gamma_{j+1}(x)$ is undefined (thus, by our initial assumption of the $\Gamma_i$, $\Gamma_i(x)$ is undefined for every $i \geq j+1$). These two facts together imply $\Gamma_i(E_n) \subseteq B$ for all~$i$. To complete the proof, observe that the construction is effective (given $n$ and $\varphi(n)$ as parameters), so the set $B$ can be uniformly enumerated if $n$ and $\varphi(n)$ are known. Since $|B| \leq k \cdot 2^{\varphi(n)}$, $C(y|\varphi(n),n) < \varphi(n) + \log k + O(1)$ for all $y \in B$. Using the fact that $C(y|u) > C(y) -2C(u) - O(1)$ for all $y,u$, we get that for every~$y \in B$, $C(y) < \varphi(n) + 2C(\varphi(n),n)+O(1)$.
\end{proof}

We are now ready to prove our bound on the functions $f$ which can witness that $(\alpha,\beta) \in \EXTp(k)$.

\begin{theorem}\label{prop:sqrt-deviation}
Let $k \geq 1$, $\beta \geq \frac{k \alpha}{1+(k-1)\alpha}$, and suppose $(\alpha,\beta)$ belongs to $\EXTp(k)$. Let $f$ be a computable function witnessing this. Then the following inequality holds:
\[ (\beta/\alpha) n - O(1) \leq f(n) \leq \frac{k}{1+(k-1)\alpha} n + \sqrt{n} + O(1),\]
where the $O(1)$ on the right hand side is dependent on $\alpha$ and $d$ as well as $k$.
\end{theorem}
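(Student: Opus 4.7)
The lower bound on $f(n)$ is immediate from Remark~\ref{rem:lb2}, so the content of the statement is the upper bound. My plan is to apply Lemma~\ref{lem:negative-partial-2} to the uniformly computable family $D_n = \{0,1\}^{f(n)}$ together with the partial computable functions $\Gamma_1,\ldots,\Gamma_k$ and constant $d$ witnessing $(\alpha,\beta) \in \EXTp(k)$, and then play the two conclusions of the lemma against the defining property of $\EXTp(k)$ to pin down an upper bound on $f(n)$.

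Concretely, fix any function $\varphi : \N \to \N$ with $\varphi(n) \le n$. Lemma~\ref{lem:negative-partial-2} supplies a constant $d'$ and, for each $n$, a string $x_n \in \{0,1\}^{f(n)}$ such that
\[ C(x_n) > f(n) - (n - \varphi(n) + 1)k - d' \]
while every $\Gamma_i(x_n)$ is either undefined or satisfies $C(\Gamma_i(x_n)) < \varphi(n) + 2 C(\varphi(n),n) + d'$. If $\varphi$ is chosen so that $\varphi(n) + 2C(\varphi(n),n) + d' < \beta n - d$ for all large $n$, then no $i$ can satisfy $C(\Gamma_i(x_n)) \geq \beta n - d$. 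Hence, whenever $C(x_n) \geq \alpha f(n) + d$, the $\EXTp(k)$ property fails; equivalently, for all such $n$ we must have $f(n) - (n-\varphi(n)+1)k - d' \leq \alpha f(n) + d$, i.e.
\[ (1-\alpha) f(n) \leq (n - \varphi(n)+1)k + O(1). \]

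The crux is the calibration of $\varphi$. I will take $\varphi(n) = \lfloor \beta n - \tfrac{1-\alpha}{k}\sqrt{n}\rfloor$ (which lies in $[0,n]$ for all large $n$, using $0 < \beta \leq 1$). The trivial bound $C(\varphi(n),n) \leq 2\log n + O(1)$ holds regardless of whether $\beta$ is computable, and so the gap $\beta n - \varphi(n) = \tfrac{1-\alpha}{k}\sqrt{n} + O(1)$ eventually dominates $2C(\varphi(n),n) + d + d'$. Substituting into the displayed inequality yields
\[ f(n) \leq \frac{k(1-\beta)}{1-\alpha}\, n + \sqrt{n} + O(1), \]
and an elementary computation shows $\tfrac{k(1-\beta)}{1-\alpha} = \tfrac{k}{1+(k-1)\alpha}$ exactly when $\beta = \tfrac{k\alpha}{1+(k-1)\alpha}$, with strict $<$ when $\beta$ is strictly larger; this delivers the claimed bound in both cases.

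The main subtle point is choosing the slack in $\varphi$ correctly: it must comfortably exceed the $O(\log n)$ coming from $2C(\varphi(n),n)$ (so that the $\EXTp(k)$ requirement is genuinely refuted) yet must not be any larger than $\tfrac{1-\alpha}{k}\sqrt{n}$ (so that after dividing by $1-\alpha$ the additive error lands at exactly $\sqrt{n}$ rather than something larger). The choice $\tfrac{1-\alpha}{k}\sqrt{n}$ is the sweet spot, and it is crucial that the argument uses only the trivial length-based estimate on $C(\varphi(n),n)$, since the theorem will later need to be invoked for possibly incomputable $\alpha,\beta$.
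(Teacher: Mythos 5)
Your argument is correct and follows essentially the same route as the paper's: apply Lemma~\ref{lem:negative-partial-2} to $D_n = \{0,1\}^{f(n)}$ with a threshold $\varphi(n)$ sitting a bit below $\beta n$, then play the two conclusions of the lemma against the defining property of $\EXTp(k)$ to squeeze $f(n)$. The only (cosmetic) differences are that the paper argues by contrapositive with $\varphi(n) = \lceil \hat{\alpha}n - 5\log n\rceil$ where $\hat{\alpha} = k\alpha/(1+(k-1)\alpha)$, whereas you argue directly with a $\sqrt{n}$ slack calibrated so that the $\sqrt{n}$ lands cleanly after dividing by $1-\alpha$; both yield the stated bound.
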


The choice of $\sqrt{n}$ is somewhat arbitrary; all we need for the proof is a computable function $h$ such that $\log n = o(h(n))$, but fixing $h(n)=\sqrt{n}$ is sufficient for our purposes.

\begin{proof}
The inequality $(\beta/\alpha) n - O(1) \leq f(n)$ is from Remark \ref{rem:lb2}.
To prove the second inequality, we will show the contrapositive.
Suppose that $(\beta/\alpha) n - O(1) \leq f(n)$ but that
\[ f(n) \nleq \frac{k}{1+(k-1)\alpha} n + \sqrt{n} + O(1).\]
Then for infinitely many $n$,
\[ f(n) \geq \frac{k}{1+(k-1)\alpha} n + \sqrt{n} .\]

Let $\hat{\alpha} = \frac{k \alpha}{1+(k-1)\alpha}$. Note that given our assumptions, it must be that $\hat{\alpha} \leq \beta$. 

Let $\varphi$ be the function defined by $\varphi(n) = \lceil \hat{\alpha}n - 5 \log n \rceil$. Note that since $\hat{\alpha} < 1$, we have $\varphi(n) \leq n$. Let $n$ be such that
\begin{eqnarray}
f(n) \geq (\hat{\alpha}/\alpha)n + \sqrt{n}.\label{eq-assumption}
\end{eqnarray}

By Lemma~\ref{lem:negative-partial-2} (where $D_n$ is the set of strings of length~$f(n)$), there is some $x$ of length $f(n)$ such that
\begin{eqnarray}
C(x) & > & f(n) -(n-\varphi(n)+1)k - O(1) \nonumber \\
 & > & f(n) - (n - \hat{\alpha}n + 6 \log n)k - O(1)\label{eq-C-f}
\end{eqnarray}
and
\begin{equation*}
\text{for every } i \leq k, \text{~either~} \Gamma_i(x) \text{~is undefined, or~} C(\Gamma_i(x)) < \varphi(n) + 2C(\varphi(n),n)+O(1)
\end{equation*}
Since $\varphi(n) \leq n$, we have $C(\varphi(n)) \leq \log n+O(1)$, and thus $C(\varphi(n),n) \leq 2\log n+O(1)$. Thus this last equation implies:
\begin{equation}
\text{for every } i \leq k, \text{~either~} \Gamma_i(x) \text{~is undefined, or~} C(\Gamma_i(x)) < \hat{\alpha}n -  \log n + O(1)\label{eq-C-Gamma}
\end{equation}
Let us use our assumption \eqref{eq-assumption} about $f(n)$ to evaluate the right-hand side of \eqref{eq-C-f}:
\begin{eqnarray*}
f(n) - (n - \hat{\alpha}n + 6 \log n)k - O(1) & = & f(n) \left[ 1 - \frac{(1-\hat{\alpha})nk}{f(n)} \right] -  5k \log n - O(1)\\
 & \geq &  f(n) \left[ 1 - \frac{(1-\hat{\alpha})nk}{(\hat{\alpha}/\alpha)n + \sqrt{n}} \right] -  6k \log n - O(1)\\
 & \geq &  f(n) \left[ 1 - \frac{(1-\hat{\alpha})k}{(\hat{\alpha}/\alpha) + n^{-1/2}} \right] -  6k \log n - O(1)\\
 & \geq &  f(n) \left[ 1 - \frac{(1-\hat{\alpha})k}{(\hat{\alpha}/\alpha)}+ (1-\hat{\alpha})kn^{-1/2} - o(n^{-1/2}) \right] \\
 & & - 6k \log n - O(1)
\end{eqnarray*}
(the last inequality comes from the asymptotic estimate $1/(a+\varepsilon)=1/a-\varepsilon+o(\varepsilon)$). One can easily verify that
\[
1 - \frac{(1-\hat{\alpha})k}{(\hat{\alpha}/\alpha)} = \alpha
\]
so the last inequality can be rewritten as
\begin{align*}
f(n) - (n - \hat{\alpha}n + &6 \log n)k - O(1) \geq \\
&\alpha f(n) + (1-\hat{\alpha})kn^{-1/2}f(n) - o(n^{-1/2}f(n)) - 6k \log n - O(1)
\end{align*}
The term $(1-\hat{\alpha})kn^{-1/2}f(n)$ dominates the terms $o(n^{-1/2}f(n))$ and $O(1)$ and, since $f(n)=\Omega(n)$, it also dominates the term $6k \log n$. So for any $d$, if $n$ was chosen large enough, we would have
\begin{equation*}
f(n) - (n - \hat{\alpha}n + 5 \log n)k - O(1) > \alpha f(n) + d
\end{equation*}
Together with \eqref{eq-C-f} and \eqref{eq-C-Gamma}, this shows that for any $d$ we can find some $n$ and $x$ of length~$f(n)$ such that $C(x) > \alpha f(n) +d$ and $C(\Gamma_i(x)) < \hat{\alpha} n - d \leq \beta n - d$ should $\Gamma_i(x)$ be defined. This contradicts our original assumption that $(\alpha,\beta) \in \EXTp(k)$ with witness~$f$, and so finishes the proof.
\end{proof}

\begin{corollary}\label{cor:partial-not}
Let $\alpha, \beta$ be in $(0,1)$ and $k \geq 2$. If $\beta > k\alpha/(1+(k-1)\alpha)$, then $(\alpha,\beta) \notin \EXTp(k)$.
\end{corollary}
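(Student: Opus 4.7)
The plan is to derive this as a direct consequence of Theorem~\ref{prop:sqrt-deviation}, which was precisely designed to pin down the allowable growth of any witness $f$ when $\beta \geq \frac{k\alpha}{1+(k-1)\alpha}$. Under the strict inequality $\beta > \frac{k\alpha}{1+(k-1)\alpha}$, both the lower bound $f(n) \geq (\beta/\alpha)n - O(1)$ (coming from information conservation, Remark~\ref{rem:lb2}) and the upper bound $f(n) \leq \frac{k}{1+(k-1)\alpha}n + \sqrt{n} + O(1)$ (coming from Theorem~\ref{prop:sqrt-deviation}) must hold simultaneously, and I expect them to be incompatible for large $n$.

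Concretely, I would argue as follows. Suppose toward a contradiction that $(\alpha,\beta) \in \EXTp(k)$, and let $f$ be a computable function witnessing this. Since $\beta \geq \frac{k\alpha}{1+(k-1)\alpha}$ (in fact the strict inequality holds), Theorem~\ref{prop:sqrt-deviation} applies, giving
\[
(\beta/\alpha)\, n - O(1) \;\leq\; f(n) \;\leq\; \frac{k}{1+(k-1)\alpha}\, n + \sqrt{n} + O(1).
\]
Subtracting and rearranging yields
\[
\left( \frac{\beta}{\alpha} - \frac{k}{1+(k-1)\alpha} \right) n \;\leq\; \sqrt{n} + O(1).
\]
Dividing the inequality $\beta > \frac{k\alpha}{1+(k-1)\alpha}$ by $\alpha \in (0,1)$ shows that $\frac{\beta}{\alpha} - \frac{k}{1+(k-1)\alpha} > 0$, so the left-hand side grows linearly in $n$ while the right-hand side grows like $\sqrt{n}$. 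This is impossible for sufficiently large $n$, giving the desired contradiction.

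There is essentially no obstacle here beyond invoking the previous theorem and verifying the elementary algebraic equivalence between $\beta > \frac{k\alpha}{1+(k-1)\alpha}$ and $\frac{\beta}{\alpha} > \frac{k}{1+(k-1)\alpha}$; the real work was done in Theorem~\ref{prop:sqrt-deviation}, whose $\sqrt{n}$ slack is already generous enough for this corollary. (As the authors note after stating Theorem~\ref{prop:sqrt-deviation}, any $h(n)$ with $\log n = o(h(n))$ would suffice, and here even $h(n) = O(1)$ plus a linear gap is what actually kills the witness.)
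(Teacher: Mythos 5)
Your proposal is correct and takes essentially the same route as the paper: assume $(\alpha,\beta)\in\EXTp(k)$, invoke Theorem~\ref{prop:sqrt-deviation} to obtain the lower bound $(\beta/\alpha)n-O(1)$ and upper bound $\frac{k}{1+(k-1)\alpha}n+\sqrt{n}+O(1)$ on any witnessing $f$, and observe that the strict inequality $\beta > k\alpha/(1+(k-1)\alpha)$ forces a linear gap that cannot be absorbed by $\sqrt{n}+O(1)$. You merely spell out the algebra a bit more explicitly than the paper's one-line conclusion that $\beta/\alpha \leq k/(1+(k-1)\alpha)$.
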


\begin{proof}
 Assume towards a contardiction that $(\alpha, \beta) \in \EXTp(k)$, witnessed by function $f$, constant $d$ and functionals $\Gamma_1, \ldots, \Gamma_k$. By Proposition~\ref{prop:sqrt-deviation} we have
\[ (\beta/\alpha) n - O(1) \leq f(n) \leq \frac{k}{1+(k-1)\alpha} n + \sqrt{n} + O(1) \]
and so $\beta / \alpha \leq \frac{k}{1+(k-1)\alpha}$. This implies that $\beta \leq k\alpha/(1+(k-1)\alpha)$, a contradiction.
\end{proof}

For $k \geq 2$, at the threshold value $\beta = k\alpha / (1+(k-1)\alpha)$, we get a positive result, but \emph{only when $\alpha$ and $\beta$ are computable}.

\begin{theorem}
Let $k \geq 1$ and $\alpha,\beta \in (0,1)$ be computable and such that $\beta = k\alpha / (1+(k-1)\alpha)$, or, equivalently, $\alpha = \beta/(k-(k-1)\beta)$. There are $k$ partial computable functionals $\Gamma_1, \ldots, \Gamma_k$ such that $\Gamma_i(\{0,1\}^{\left\lfloor{(\beta/\alpha) n}\right\rfloor}) \subseteq \{0,1\}^n$ for all $i,n$, and a constant $d$, such that when $|x|=\left\lfloor{(\beta/\alpha) n}\right\rfloor$ and $C(x) \geq \alpha|x|+d$, $C(\Gamma_i(x)) \geq \beta n - d$ for some~$i$.
\end{theorem}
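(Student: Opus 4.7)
My plan follows the general template of Theorem~\ref{thm:main-pos} but uses partiality to compensate for the $\Theta(n)$ slack that blocks the total construction at the threshold.

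Set $f(n)=\lfloor(\beta/\alpha)n\rfloor$. The first step would be to computably fix a sequence of $k$-hypergraphs $G_n$ on $2^n$ vertices with $2^{f(n)}$ hyperedges, with the property that $e(U)\leq c_k\cdot n\cdot 2^{\beta n}$ for every $U$ of size $\leq 2^{\beta n}$. Existence is via the same Chernoff-plus-union-bound argument as in Theorem~\ref{thm:main-pos}, specialized to the threshold value of $\beta$; since the target property is decidable and $\alpha,\beta$ are computable, such $G_n$ can be found uniformly by exhaustive search.

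The second step is to define the partial $\Gamma_i$ through a c.e.\ construction driven by the enumeration of $U_n=\{y\in\{0,1\}^n: C(y)<\beta n-d\}$. After fixing a computable bijection $x\mapsto e_x=(y_1^x,\ldots,y_k^x)$ between $\{0,1\}^{f(n)}$ and the hyperedges of $G_n$, I would put $\Gamma_i(x)=y_i^x$ unless some c.e.\ ``bad event'' associated with $x$ fires, in which case $\Gamma_i(x)$ is left undefined. The bad event flags inputs whose hyperedge is being pulled entirely into $U_n$; making these inputs undefined removes their contribution to the overcount. Correctness then reduces to bounding the c.e.\ set $B_n$ of ``bad'' $x$ --- those for which either $\Gamma_i(x)$ is undefined for all $i$ or every defined $\Gamma_i(x)$ lands in $U_n$ --- by $|B_n|\leq 2^{\beta n+d}$: combined with Corollary~\ref{cor:seq-ceC} and the identity $\alpha f(n)=\beta n$ at the threshold, this gives the extractor property.

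The \textbf{main obstacle} is precisely this bound $|B_n|\leq O(2^{\beta n})$: the quasi-random property of $G_n$ only delivers $e(U_n)\leq O(n\cdot 2^{\beta n})$, which is a factor of $n$ too large, and indeed this very $\Theta(n)$ slack is what makes the \emph{total} construction fail by Theorem~\ref{thm:neg-tot}. Computability of $\alpha$ and $\beta$ must enter at exactly this point, letting the construction calibrate its bad-event threshold precisely at $2^{\beta n+d}$ and choose which hyperedges to deactivate in a way tied to the enumeration order of $U_n$; the combinatorial heart of the proof is to show that the $\Theta(n)$ excess can be absorbed into partiality without $B_n$ itself exceeding $2^{\beta n+d}$.
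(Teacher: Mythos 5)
You have correctly diagnosed the obstacle, but that diagnosis is also where the proposal stops: the step you call "the combinatorial heart of the proof" is left unproved, and the paper's results show it cannot be proved along the lines you suggest. Concretely, at the threshold $\beta=k\alpha/(1+(k-1)\alpha)$, the best hypergraph bound achievable by the Chernoff argument is $e(U)=O(n\cdot 2^{\beta n})$ for $|U|\leq 2^{\beta n}$, and Lemma~\ref{lem:seq} shows that the extra factor of $n$ is not an artifact of the method — \emph{no} sequence of $k$-hypergraphs on $2^n$ vertices with $2^{f(n)}$ hyperedges can achieve $e(U)\leq 2^{\beta n+D}$ uniformly for a constant $D$. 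So the bad set $B_n$ in your construction has cardinality $\Omega(n\cdot 2^{\beta n})$ at infinitely many $n$, and Proposition~\ref{prop:small-ce-sets} then only gives $C(x)\leq\beta n+O(\log n)$ for $x\in B_n$, which is not $\leq\alpha f(n)+d$. Your proposal does not describe any mechanism by which partiality actually removes this $\Theta(\log n)$ excess: leaving $\Gamma_i(x)$ undefined for $x\in B_n$ does not shrink $B_n$, and the inputs with all $\Gamma_i$ undefined must themselves satisfy $C(x)<\alpha f(n)+d$, which is exactly the bound you cannot achieve. The claim that computability of $\alpha,\beta$ "lets the construction calibrate its bad-event threshold" is a hope, not an argument, and the hypergraph route is provably closed.

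The paper's actual proof abandons hypergraphs entirely for this threshold case and proceeds by induction on $k$, using partiality in a qualitatively different way. The inductive step takes one functional $\Gamma_{k+1}$ to \emph{search for} a $\mathbb{U}$-description of $x$ of length in $[n+d,n+2d]$ and, if one is found, output its $n$-bit prefix; by a padding argument this search terminates precisely when $C(x)\leq n+d$, and when it does terminate the $n$-bit prefix inherits complexity $\geq\beta n-O(1)$ whenever $C(x)\geq\beta n+d$. When $\Gamma_{k+1}(x)$ diverges one learns $C(x)>n+d$, and then a suitable prefix $x^-$ of $x$ (of length $\lfloor(\beta/\gamma)n\rfloor$ where $\gamma=\beta/(k-(k-1)\beta)$) satisfies $C(x^-)\geq\beta n+d-O(1)$, so the remaining $k$ functionals $\Gamma_1,\dots,\Gamma_k$ can be obtained by applying the induction hypothesis at level $k$ to $x^-$. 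This is where partiality genuinely earns its keep: $\Gamma_{k+1}$ acts as a filter that converges exactly on the "low-complexity" inputs and conveys one bit of side information ($C(x)>n+d$) to the other functionals via its divergence — a trick with no analogue in the static hypergraph picture.
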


\begin{proof}
We will ensure that there is a constant $e$ such that if $C(x) \geq \beta n + e$, then $C(\Gamma_i(x)) \geq \beta n - e$ for some $i$. The result will then follow by taking $d=e+\alpha$.

Note that as $\alpha$ and $\beta$ are computable, the map $n \mapsto \left\lfloor{(\beta/\alpha) n}\right\rfloor$ is also computable (though it is not computable uniformly in a code for $\beta / \alpha$). It is computable if $\beta / \alpha$ is rational, and if this is not rational, then $(\beta/\alpha) n$ is never an integer and so we can compute the floor function of $(\beta/\alpha) n$.

We prove this by induction. For $k=1$, we have $\beta=\alpha$ so it suffices to take $\Gamma_1=id$ and we are done.

Suppose the proposition holds for level~$k$, and let us prove it for $k+1$. Consider $\alpha, \beta$ with $\beta = (k+1)\alpha / (1+k\alpha)$. Let $d$ be a large constant, which we will implicitly define throughout the proof by listing the properties it must have. We let $\Gamma_{k+1}$ be the functional which on an input $x$ of size $\left\lfloor{(\beta/\alpha)n}\right\rfloor$ looks for a $\mathbb{U}$-description $p$ for~$x$ whose length belongs to~$[n + d,n+2d]$. If $d$ is large enough, such a $p$ will be found if $C(x) \leq n+d$ (here we use a classical `padding' result for Kolmogorov complexity: there exists a constant $a$ such that for any $x$, if $C(x)=k$, then for any $k' \geq k$, there exists an $\mathbb{U}$-description $q$ of $x$ whose length belongs to $[k',k'+a]$). If such a $p$ is found, $\Gamma_{k+1}(x)$ returns the prefix $p'$ of $p$ of length~$n$. Note that in that case, if $C(x) \geq \beta n+d$, then $C(p') \geq \beta n-O(1)$: indeed, to recover $p$ from $p'$ we only need $2d$ bits of information (at most), and from $p$ we can recover $x$. Thus, in this case, $\Gamma_{k+1}$ `succeeds' on $x$.


However, $\Gamma_{k+1}(x)$ could be undefined, which as we saw would mean that $C(x)>n +d$. In this case, we use the induction hypothesis at level~$k$: Setting $\gamma=\beta/(k-(k-1)\beta)$, there are $k$ partial functionals $\Phi_1,  \ldots, \Phi_k$ such that $\Phi_i(\{0,1\}^{\left\lfloor{(\beta/\gamma) n}\right\rfloor}) \subseteq \{0,1\}^n$ for all $i,n$, and a constant $e$ such that when $|y|=\left\lfloor{(\beta/\gamma) n}\right\rfloor$ and $C(y) \geq \beta n +e$, then $C(\Phi_i(y)) \geq \beta n - e$ for some~$i$.

For $i \leq k$, let $\Gamma_i$ be the functional which does the following. On an input $x$ of length~$\lfloor(\beta/\alpha)n\rfloor$, it computes the prefix $x^-$ of $x$ of length $\lfloor(\beta/\gamma)n\rfloor$, and returns $\Phi_i(x^-)$. We claim than when $\Gamma_{k+1}(x)$ is undefined, i.e., when $C(x)\geq n +d$, one of the $\Gamma_i$, $i \leq k$ must succeed.

Indeed, when $C(x) \geq n +d $, because $x^-$ is obtained from $x$ by removing only $\left\lfloor{(\beta/\alpha) n}\right\rfloor - \left\lfloor{(\beta/\gamma) n}\right\rfloor$ bits (which is computable knowing $n$), we must have
\begin{eqnarray*}
C(x^-) & \geq & n+d - \big( \left\lfloor{(\beta/\alpha) n}\right\rfloor - \left\lfloor{(\beta/\gamma) n}\right\rfloor \big) - O(1)\\
 & \geq & n + \frac{\beta}{\gamma}n - \frac{\beta}{\alpha}n - O(1)\\
 & \geq &  n \big(1 + (k - (k-1) \beta) - ((k+1) - k\beta )\big) +d - O(1)\\
 & \geq & \beta n+d - O(1)\\
\end{eqnarray*}
where the $O(1)$ term is independent of all other terms (it only depends on the choice of universal machine $\mathbb{U}$). Thus, if $d$ is chosen large enough, we have $C(x^-) \geq \beta n +e$ and the induction hypothesis  proves that in this case, one of the $\Phi_i(x^-)$ returns a string $y$ of length~$n$ with $C(y) \geq \beta n - O(1)$.
\end{proof}

\begin{theorem}\label{thm:threshold-not-computable}
Let $k \geq 2$. If $\alpha$, $\beta \in (0,1)$ are such that $\beta = k\alpha / (1+(k-1)\alpha)$, but are not computable (note that the relation between $\alpha$ and $\beta$ implies that they are either both computable or both incomputable), then $(\alpha, \beta) \notin \EXTp(k)$ and a fortiori, $(\alpha, \beta) \notin \EXT(k)$.
\end{theorem}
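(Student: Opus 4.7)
The plan is to suppose for contradiction that $(\alpha, \beta) \in \EXTp(k)$ and to show that $\alpha$ would then be computable, contradicting the hypothesis. I would let $f$ be a computable witness, and set $r := \beta/\alpha$. First I would apply Theorem~\ref{prop:sqrt-deviation}: using the threshold identity $r = k/(1+(k-1)\alpha)$, the two bounds $(\beta/\alpha)n - O(1) \leq f(n) \leq \frac{k}{1+(k-1)\alpha}n + \sqrt{n} + O(1)$ share their leading coefficient $r$, so there is a constant $C \in \N$ such that
\[
|f(n) - rn| \leq \sqrt{n} + C \quad \text{for all } n \geq 1.
\]

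Next I would argue that this forces $r$ to be a computable real. Dividing by $n$, the sequence $(f(n)/n)_{n \geq 1}$ is a computable sequence of rationals satisfying $|f(n)/n - r| \leq 1/\sqrt{n} + C/n$. I cannot effectively extract $C$ from $f$ alone, but $C$ is some specific natural number. For that particular value of $C$, consider the algorithm which on input a rational $\epsilon > 0$ outputs $f(N)/N$ with $N := \max(\lceil 4/\epsilon^2 \rceil,\, \lceil 2C/\epsilon \rceil)$ and with $C$ hard-coded: for such $N$ we have $1/\sqrt{N} \leq \epsilon/2$ and $C/N \leq \epsilon/2$, so the output is within $\epsilon$ of $r$. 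Since such a Turing machine exists, $r$ is computable.

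Finally, I would invert the identity $r = k/(1 + (k-1)\alpha)$ to obtain $\alpha = (k - r)/(r(k-1))$; the denominator is nonzero since $k \geq 2$ and $r > 0$ (as $\alpha, \beta \in (0,1)$), so $\alpha$ is computable from $r$. This contradicts the hypothesis that $\alpha$ is not computable, and the \emph{a fortiori} claim for $\EXT(k)$ follows from $\EXT(k) \subseteq \EXTp(k)$.

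The main subtle point is the non-uniformity in the computability step: I cannot algorithmically produce the correct constant $C$ from $(f, d, \Gamma_1, \ldots, \Gamma_k)$, but for the (unknown but fixed) value of $C$ furnished by Theorem~\ref{prop:sqrt-deviation}, the algorithm above correctly computes $r$. This is enough, since a real is computable as soon as \emph{some} Turing machine approximates it effectively, whether or not one can exhibit such a machine uniformly in the surrounding data.
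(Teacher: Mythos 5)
Your proof is correct and follows essentially the same route as the paper's: invoke Theorem~\ref{prop:sqrt-deviation} to pin $f(n)/n$ within $O(n^{-1/2})$ of $\beta/\alpha$, conclude that $\beta/\alpha$ (hence $\alpha$, since $k \geq 2$) is computable, and derive a contradiction. Your explicit handling of the hidden constant $C$ and the non-uniformity of the approximating machine is a careful elaboration of what the paper states more tersely.
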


\begin{proof}
Suppose for the sake of contradiction that $\Gamma_1, \ldots, \Gamma_k$ and $f$ witness that $(\alpha, \beta) \in \EXTp(k)$. By Proposition~\ref{prop:sqrt-deviation} we have
\[ (\beta/\alpha) n - O(1) \leq f(n) \leq \frac{k}{1+(k-1)\alpha} n + \sqrt{n} + O(1). \]
But $(\beta / \alpha) = \frac{k}{1+(k-1)\alpha}$, and so the computable function $f(n)/n$ would converge to $\beta/\alpha$ at computable speed (namely $|f(n)/n - (\beta/\alpha)|<n^{-1/2}$), making $\beta/\alpha$ computable. But $\beta/\alpha= k / (1+(k-1)\alpha)$, so this would make $\alpha$ computable, a contradiction. (This is where we use that $k \geq 2$).
\end{proof}

\section{Going beyond constant-size advice}

The tight inequality $\beta < k\alpha/(1+(k-1)\alpha)$ we have obtained allows us to get a more precise version of Theorem~\ref{thm:vv}:

\begin{theorem}\label{thm:vv-improved}
Fix $0<\alpha<1$ and suppose there is a partial computable function $E(.,.)$, a linear function $f$, and a constant $m$, with the property that for every $n$, for every $\sigma$ of length $f(n)$ such that $C(\sigma) \geq \alpha |\sigma|$, there exists a string $a_\sigma$ of length~$h$ such that $\tau=E(\sigma,a_\sigma)$ has length~$n$ and $C(\tau) \geq \beta |\tau|$. Then  $\beta \leq 1 - \left( \frac{1-\alpha}{\alpha}\right)2^{-h}+o(2^{-h})$. Moreover, this bound is tight.
\end{theorem}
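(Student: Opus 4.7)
The plan is to reduce everything to the characterization of $\EXTp(k)$ for $k=2^h$. The key observation, made in the paper just before the definition of $\EXT(k)$, is that a partial computable $E(\cdot,\cdot)$ with an $h$-bit advice argument is the same data as a family of $2^h$ partial computable functions $\Gamma_a(\sigma)=E(\sigma,a)$, one for each $a\in\{0,1\}^h$. Once this identification is made, the upper bound becomes an application of Corollary~\ref{cor:partial-not} and the tightness becomes an application of the positive direction of Theorem~\ref{thm:main-partial}.

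I would first perform the translation. Given the hypothesized $E$, $f$, $h$, define for each $a\in\{0,1\}^h$ a partial computable $\Gamma_a$ as follows. Since $f$ is linear and one-to-one on~$\N$, one can recover $n$ from $|\sigma|=f(n)$; the algorithm computes~$n$, runs $E(\sigma,a)$, and returns the result if and only if it halts with output of length exactly~$n$. The hypothesis then guarantees that for every $\sigma$ of length $f(n)$ with $C(\sigma)\geq\alpha|\sigma|$, some $\Gamma_a(\sigma)$ is defined with $|\Gamma_a(\sigma)|=n$ and $C(\Gamma_a(\sigma))\geq\beta n$. A fortiori, for any constant~$d$, if $C(\sigma)\geq\alpha|\sigma|+d$ then the same holds and in particular $C(\Gamma_a(\sigma))\geq\beta n-d$. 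Thus $(\alpha,\beta)\in\EXTp(2^h)$.

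Next, I would apply Corollary~\ref{cor:partial-not} with $k=2^h$ to conclude
\[ \beta \;\leq\; \frac{2^h\alpha}{1+(2^h-1)\alpha}. \]
A direct algebraic manipulation rewrites this as
\[ 1-\beta \;\geq\; \frac{1-\alpha}{1+(2^h-1)\alpha} \;=\; \frac{1-\alpha}{2^h\alpha+(1-\alpha)} \;=\; \frac{1-\alpha}{\alpha}\,2^{-h} + o(2^{-h}), \]
where the last equality is just the expansion $\frac{1}{2^h\alpha+(1-\alpha)} = \frac{1}{2^h\alpha}\bigl(1 - \frac{1-\alpha}{2^h\alpha} + O(2^{-2h})\bigr)$. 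This gives the claimed bound.

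For tightness, I would invoke the positive direction of Theorem~\ref{thm:main-partial}: for any rational $\beta'$ strictly less than $2^h\alpha/(1+(2^h-1)\alpha)$, one has $(\alpha,\beta')\in\EXTp(2^h)$, witnessed by a family $\Gamma_1,\ldots,\Gamma_{2^h}$ of partial computable functions together with a linear $f$. Repackaging this family as a single two-argument function $E(\sigma,a)=\Gamma_a(\sigma)$ with $a\in\{0,1\}^h$ yields an extractor of the form in the statement realizing $\beta=\beta'$; choosing $\beta'$ within $2^{-2h}$ of the threshold shows that the constant $(1-\alpha)/\alpha$ in front of the $2^{-h}$ term is optimal. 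The main thing to be careful about in this proof is the translation bookkeeping (enforcing $|\Gamma_a(\sigma)|=n$ and handling the slight mismatch between the hypothesis, which has no additive constant, and the definition of $\EXTp$, which allows one); there is no substantive new technical content beyond what is already in Theorem~\ref{thm:main-partial}.
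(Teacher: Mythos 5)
Your proposal is correct and takes essentially the same route as the paper's own proof: identify $h$ bits of advice with $k=2^h$ (partial) functionals, apply the $\EXTp(k)$ characterization (the corollary that forces $\beta\leq k\alpha/(1+(k-1)\alpha)$, plus the positive direction for tightness), and finish with the asymptotic expansion of $1-\tfrac{2^h\alpha}{1+(2^h-1)\alpha}=\tfrac{1-\alpha}{2^h\alpha+(1-\alpha)}$. Your version is in fact slightly more explicit than the paper about the bookkeeping in the translation (length-check inside $\Gamma_a$, absorbing the additive constant $d$), which is a welcome clarification rather than a divergence.
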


\begin{proof}
As we discussed in the introduction, having~$h$ bits of advice is equivalent to having $k=2^h$ functionals. The result then follows from the tight bound $\beta \leq k\alpha/(1+(k-1)\alpha) = 2^h /(1+(2^h-1)\alpha)$ that arises from the results in the last two sections, and the straightforward asymptotic estimate $2^h /(1+(2^h-1)\alpha) = (1-\alpha)/\alpha \cdot 2^{-h}+o(2^{-h})$.
\end{proof}

Zimand also studied the case where the amount of advice $h$ is no longer constant but is a (computable) function of~$n$. He showed the following theorem (which we slightly reformulate to fit our framework), essentially showing that if we allow any unbounded amount of advice, then we can asymptotically achieve dimension~$1$:

\begin{theorem}[Zimand~\cite{Zimand2011}]
Let $f, h$ be computable functions such that $f(n) \geq n$ and $\log(f(n)/n) = o(h(n))$. Then there exist a computable function~$E(.,.)$ and a constant~$d$ such that for every~$n$, if $|x|=f(n)$ and $|a|=h(n)$, then $|E(x,a)|=n$ and if moreover $x$ has length~$f(n)$ and $C(x) \geq n+d$, then for some $a$ of length~$h(n)$, $C(E(x,a)) \geq n-\frac{f(n)}{2^{h(n)/2}} \geq n-o(n)$.
\end{theorem}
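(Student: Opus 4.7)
The plan is to follow the probabilistic hypergraph template from the proof of Theorem~\ref{thm:main-pos} but with arity $k_n = 2^{h(n)}$ depending on $n$. For each $n$, set $\ell_n = \lceil n - f(n)/2^{h(n)/2}\rceil$ and seek a function $E_n: \{0,1\}^{f(n)} \times \{0,1\}^{h(n)} \to \{0,1\}^n$ with the combinatorial property that for every $U \subseteq \{0,1\}^n$ with $|U| \leq 2^{\ell_n}$, the set $B_U := \{\sigma : E_n(\sigma,a) \in U \text{ for every } a\}$ has $|B_U| \leq 2^{n+d}$, for a fixed constant $d$. This is a decidable property of $E_n$ for each $n$, so once existence is granted, a single computable $E$ can be obtained uniformly in $n$ by exhaustive search.

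For the existence step, I would sample the values $E_n(\sigma, a)$ independently and uniformly from $\{0,1\}^n$. For a fixed $U$ with $|U| \leq 2^{\ell_n}$,
\[ \mathbb{E}[|B_U|] \leq 2^{f(n)} \left( 2^{\ell_n}/2^n \right)^{2^{h(n)}} = 2^{f(n)(1 - 2^{h(n)/2})}, \]
which is at most $1$ as soon as $h(n) \geq 2$. The Chernoff bound (Part (1)) then gives $\Pr[|B_U| \geq 2^{n+d}] \leq 2^{-2^{n+d}}$. The number of candidate $U \subseteq \{0,1\}^n$ with $|U| \leq 2^{\ell_n}$ is bounded by $2^{(y + O(1)) \cdot 2^{n - y}}$, where $y = f(n)/2^{h(n)/2}$; since $y \cdot 2^{-y}$ is uniformly bounded in $y > 0$, this is at most $2^{c \cdot 2^n}$ for an absolute constant $c$. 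A union bound over all such $U$ then gives $\Pr[\exists U : |B_U| \geq 2^{n+d}] < 1$ for any constant $d$ with $2^d > c$, yielding an admissible $E_n$.

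The Kolmogorov-complexity conclusion is then standard. Let $U_n$ be the c.e.\ set of strings of length $n$ with $C < \ell_n$; by Fact~\ref{fact:measureC}, $|U_n| \leq 2^{\ell_n}$. Then $B_n := B_{U_n}$ is c.e.\ uniformly in $n$ with $|B_n| \leq 2^{n+d}$. Since $C(n \mid n+d) = O(1)$, Proposition~\ref{prop:small-ce-sets} yields $C(\sigma) \leq n + d + O(1)$ for every $\sigma \in B_n$. Taking the contrapositive, whenever $C(\sigma) \geq n + d'$ for a sufficiently large constant $d'$, we have $\sigma \notin B_n$, so some $E(\sigma, a) \notin U_n$ and therefore $C(E(\sigma, a)) \geq \ell_n = n - f(n)/2^{h(n)/2}$, as required.

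The technically delicate step is the union bound: the family of candidate low-complexity sets $U$ is doubly-exponentially large, which a priori could swamp the Chernoff concentration. The balance works because $y \cdot 2^{-y}$ is uniformly bounded, which makes the union-bound exponent only $O(2^n)$ and allows a constant $d$ to close the gap against the $2^{n+d}$ exponent from the Chernoff bound. The $2^{h(n)/2}$ in the denominator of $\ell_n$ is dictated precisely by this balance: a finer target such as $\ell_n = n - f(n)/2^{h(n)}$ would exceed the probabilistic budget and would not survive a union bound of this form.
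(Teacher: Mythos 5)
The paper does not prove this theorem: it is stated as a cited result of Zimand~\cite{Zimand2011}, and the paper immediately goes on to prove a \emph{strengthened} version of it (the next theorem in the paper, with $n - \frac{f(n)-n}{2^{h(n)}} - d$ in the conclusion and the hypothesis $\log(f(n)/n) = o(h(n))$ dropped) by a random-hypergraph argument. Your proposal is essentially that argument, run against the weaker target $\ell_n = n - f(n)/2^{h(n)/2}$. The overall structure is sound: the translation to the combinatorial property ``$|U| \leq 2^{\ell_n}$ implies $|B_U| \leq 2^{n+d}$,'' the exhaustive search to make $E$ computable, the independent sampling of the values $E_n(\sigma,a)$ (which fixes the number of ``edges'' at $2^{f(n)}$, slightly cleaner than the paper's model, which samples each potential $k(n)$-tuple independently with a fixed probability and then needs Chernoff once more to guarantee enough edges), and the final complexity argument via Proposition~\ref{prop:small-ce-sets} all match the paper's scheme.

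Two corrections are in order. The smaller one: with $\ell_n = \lceil n - f(n)/2^{h(n)/2}\rceil$ you only get $\ell_n - n \leq 1 - f(n)/2^{h(n)/2}$, so the honest bound is $\mathbb{E}[|B_U|] \leq 2^{f(n) + 2^{h(n)} - f(n)2^{h(n)/2}}$, not $2^{f(n)(1 - 2^{h(n)/2})}$; the extra $2^{h(n)}$ term can dominate when $h(n)$ is large relative to $\log f(n)$, at which point your Chernoff step breaks. Using the floor instead of the ceiling restores the bound you wrote, at the price of an additive $-1$ in the final conclusion, which is harmless given the usual implicit slack in such statements.

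The substantive correction concerns your closing remark. You claim that a finer target such as $\ell_n = n - f(n)/2^{h(n)}$ ``would exceed the probabilistic budget and would not survive a union bound of this form.'' This is false, and the paper's subsequent theorem shows exactly this. With $\ell_n$ near $n - (f(n)-n)/2^{h(n)}$ one gets $\mathbb{E}[|B_U|] \approx 2^n$ rather than $\leq 1$, but the Chernoff bound (Part (1)) with $\delta = 2^d$ still yields $\Pr[|B_U| \geq 2^{n+d}] \leq 2^{-2^{n+d}}$, and the union bound over at most ${2^n \choose \lfloor 2^{\ell_n}\rfloor} = o(2^{2^n})$ sets still closes for $d$ a small constant. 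The requirement $\mathbb{E}[|B_U|] \leq 1$ that you impose is much stronger than what the argument actually needs; the union-bound exponent is $O(2^n)$ and the Chernoff exponent is $2^{n+d}$, and a constant gap in $d$ suffices regardless of whether the expectation is $1$ or $2^n$. So $2^{h(n)/2}$ is not dictated by the balance; it is an artefact of Zimand's original argument, and the paper's point in the next theorem is precisely that the exponent can be improved to $2^{h(n)}$.
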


This says for example, when $f(n)=2n$, and $h(n) \rightarrow \infty$, that using $h(n)$ bits of advice one can turn a string of length~$2n$ and dimension $1/2$ into a string of dimension $1-\varepsilon(n)$, where $\varepsilon(n) \rightarrow 0$.

By using a variant of our random graph argument from Section~\ref{sec:total-two}, we can get a slight improvement of this result, namely, we can prove the following.

\begin{theorem}
Let $f, h$ be computable functions such that $f(n) \geq n$. Then there exist a computable function~$E(.,.)$ and a constant~$d$ such that for every~$n$, if $|x|=f(n)$ and $|a|=h(n)$, then $|E(x,a)|=n$ and if moreover $C(x) \geq n+d$, then for some $a$ of length~$h(n)$, $C(E(x,a)) \geq n-\frac{f(n)-n}{2^{h(n)}}-d$.
\end{theorem}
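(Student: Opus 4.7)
The plan is to adapt the random hypergraph construction of Theorem~\ref{thm:main-pos} to the setting where the arity $k_n := 2^{h(n)}$ of the hypergraph depends on~$n$. Set $\delta_n := (f(n) - n)/k_n$, so the target reads $C(E(x,a)) \geq n - \delta_n - d$. Exactly as in the correspondence of Theorem~\ref{thm:graph-correspondence}, having $h(n)$ bits of advice amounts to having $k_n$ functionals $\Gamma_a$, and it is enough to construct, for each $n$, a $k_n$-uniform hypergraph $G_n$ on vertex set $\{0,1\}^n$ with $2^{f(n)}$ hyperedges (allowing multiplicities) such that $e(U) < 2^{n + d^*}$ whenever $|U| \leq 2^{n - \delta_n - d^*}$, for some absolute constant $d^*$ independent of~$n$.

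For the construction, I would pick $2^{f(n)}$ hyperedges independently and uniformly at random from $\binom{\{0,1\}^n}{k_n}$. For a fixed $U$ of size $t = 2^{n - \delta_n - d^*}$, the estimate $\binom{t}{k_n}/\binom{2^n}{k_n} \leq (t/2^{n-1})^{k_n}$ combined with the crucial identity $k_n \delta_n = f(n) - n$ gives $\mathbb{E}[e(U)] \leq 2^{n + k_n(1 - d^*)}$; for $d^* \geq 1$ this is bounded by $2^n$. The Chernoff bound then yields $\Pr\bigl[e(U) \geq 2^{n + d^*}\bigr] \leq 2^{-2^{n+d^*}}$ once $d^*$ is large enough for the relevant ratio to exceed~$6$. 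Now the key point: using the sharp estimate $\sum_{s \leq t} \binom{2^n}{s} \leq (e \cdot 2^n / t)^t = 2^{t(\delta_n + d^* + \log e)}$ (rather than the cruder $2^{nt}$), the union bound yields a failure probability at most $2^{t(\delta_n + d^* + \log e) - 2^{n + d^*}}$, which is $o(1)$ provided $\delta_n + d^* + \log e < 2^{\delta_n + 2d^* - 1}$; this inequality holds for every $\delta_n \geq 0$ once $d^*$ is a single sufficiently large absolute constant. (When $n - \delta_n - d^* < 0$ the statement is vacuous.) Hence a suitable $G_n$ exists, and since the required property is decidable and $f, h$ are computable, $G_n$ can be produced uniformly in~$n$ by brute-force search.

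Given the sequence $(G_n)$, define $E(x,a) = \Gamma_a(x)$ as in the proof of $(b)\Rightarrow(a)$ of Theorem~\ref{thm:graph-correspondence}, using a fixed bijection between $\{0,1\}^{f(n)}$ and the hyperedges of $G_n$ and a fixed indexing of the $k_n$ vertices of each edge by $\{0,1\}^{h(n)}$. For the verification, suppose toward a contradiction that $C(x) \geq n + d$ but $C(\Gamma_a(x)) < n - \delta_n - d$ for every~$a$, where $d \geq d^*$ is a constant to be chosen. Letting $U_n = \{v : C(v) < n - \delta_n - d\}$, Fact~\ref{fact:measureC} gives $|U_n| < 2^{n - \delta_n - d} \leq 2^{n - \delta_n - d^*}$, so the hypergraph property yields $e(U_n) < 2^{n + d^*}$. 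The set $S_n = \{\sigma \in \{0,1\}^{f(n)} : e_\sigma \subseteq U_n\}$ is uniformly c.e.\ in~$n$ with $|S_n| < 2^{n+d^*}$, and since $C(n \mid n+d^*) = O(1)$ (because $d^*$ is a constant), Proposition~\ref{prop:small-ce-sets} yields $C(\sigma) \leq n + d^* + O(1)$ for every $\sigma \in S_n$, in particular for $\sigma = x$; taking $d$ strictly larger than this absolute constant contradicts $C(x) \geq n + d$.

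The main obstacle is ensuring the probabilistic argument produces a \emph{constant} $d^*$ that works uniformly in~$n$, despite $k_n$ and $\delta_n$ varying arbitrarily with~$n$. The resolution rests on the sharp $(e \cdot 2^n/t)^t$ bound for the number of candidate sets~$U$; once this is used, the relevant tension between the union bound and the Chernoff tail reduces to the inequality $\delta_n + d^* + \log e < 2^{\delta_n + 2d^* - 1}$, which is easily satisfied by a single sufficiently large $d^*$ for all $\delta_n \geq 0$. The remainder of the argument is a straightforward adaptation of the machinery developed in Sections~\ref{sec:total-one} and~\ref{sec:total-two}.
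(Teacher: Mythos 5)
Your proof is correct and follows essentially the same route as the paper: reduce to the existence, for each~$n$, of a $k_n$-uniform hypergraph on $\{0,1\}^n$ with $2^{f(n)}$ hyperedges and few hyperedges inside every small vertex set, build it probabilistically via the Chernoff bound, and close the loop with Proposition~\ref{prop:small-ce-sets}. The differences are cosmetic: you sample exactly $2^{f(n)}$ unordered $k_n$-subsets uniformly and invoke $\sum_{s\le t}\binom{N}{s}\le(eN/t)^t$, whereas the paper declares each \emph{ordered} $k_n$-tuple (repetitions allowed) an edge independently with probability $2^{f(n)-nk_n+3}$ and is content with the crude $\binom{a}{b}=o(2^a)$; either union bound suffices here. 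Two small points worth flagging: (i) working with unordered $k_n$-subsets of $\{0,1\}^n$ tacitly assumes $2^{h(n)}\le 2^n$, which the statement does not guarantee---the paper's shift to ordered tuples with coordinate repetitions is exactly what dispenses with that restriction, and you should either import it or argue the large-$h$ case separately; (ii) in the verification you define $U_n$ in terms of the yet-to-be-chosen $d$ and then want $d$ to exceed the $O(1)$ from Proposition~\ref{prop:small-ce-sets} applied to the $d$-dependent sequence $(S_n)$, which as written is circular; define $U_n=\{v:C(v)<n-\delta_n-d^*\}$ with the fixed $d^*$ instead, obtain a fixed $c$ with $C(\sigma)\le n+d^*+c$ on $S_n$, and then set $d=d^*+c+1$.
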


\noindent (Note in particular that we no longer need to assume $\log(f(n)/n) = o(h(n))$).

\begin{proof}
Let $k(n)=2^{h(n)}$ and set $\psi(n)= n- \frac{f(n)-n}{k(n)}$. Let us again reformulate the problem into a combinatorial one.


The existence of such an $E$ will follow from the following fact which natural analogue of Theorem~\ref{thm:graph-correspondence}: there exist $d$ and a computable sequence $(G_n)$, where for all $n$, $G_n$ is a $k(n)$-hypergraph with $2^n$ vertices and $2^{f(n)}$ edges, such that for every set~$U$ of vertices of $G_n$ of size $<2^{\psi(n)}$, $e(U) \leq 2^{n+d}$. However, in order to simplify our calculations, for this proof only we shall define a $k$-hyperedge over a set of vertices $V$ to be a $k$-tuple of elements of $V$\footnote{The reason we defined hyperedges to be unordered and without repetitions up to this point is that Lemma~\ref{lem:seq} seems easier to prove in this setting.}. For a subset $U$ of $V$, $e(U)$ is the number of hyperedges all of whose coordinates belong to $U$.

Let us briefly check that this fact implies our theorem. The idea is almost the same as for Theorem~\ref{thm:graph-correspondence}. Suppose there exists such a sequence of $k(n)$-hypergraphs $(G_n)$. One computably labels the vertices of $G_n$ with strings of length~$n$ and the edges with strings of length $f(n)$. Define $E(x,a)$ to be the $a$-th coordinate of the edge labeled $x$, where $a$ is a string of length $h(n)$, identified with an integer in $[1,2^{h(n)}]=[1,k(n)]$.

Now take $U_n$ to be the set of (labels of) vertices $y$ of $G_n$ such that $C(y) < \psi(n)$ (which implies $|U_n|<2^{\psi(n)}$); if indeed $e(U_n) < 2^{n+d}$ we can ensure as before that all edges in $E(U_n)$ have complexity $<n+d+O(1)$. The contrapositive says that if $C(x)> n+d+O(1)$, then one of the coordinates of the edge (labeled by) $x$ has complexity $>\psi(n)$, that is, some $E(x,a)$ has complexity $>\psi(n)$.


So now it remains to prove the combinatorial fact. For each~$n$, consider the random $k(n)$-hypergraph $G_n$ with $2^{n}$ vertices and where each $k(n)$-hyperedge has probability $2^{f(n)-nk(n)+3}$ to be put in the hypergraph, with~$D$ a large constant. Now there are $(2^n)^{k(n)}=2^{n k(n)}$ potential edges, so the expectation of the number of edges in $G_n$ is $2^{f(n)+3}$. By the Chernoff bound $G_n$ has at least $2^{f(n)}$ hyperedges with probability $>1/2$. For any fixed set~$U$ of vertices of size $<2^{\psi(n)}$, there are at most $2^{\psi(n)  k(n)}$ hyperedges all of whose coordinates are in~$U$, thus we have
\[
\mathbb{E}(e(U))=2^{\psi(n) k(n)} \cdot 2^{f(n)-nk(n)+3} \leq {2^{n+3}}
\]
(for the last inequality we use the definition of $\psi(n)$). Thus, by the Chernoff bound, the probability that $e(U)>2^{n+6}$ is less than $2^{-2^{n+6}}$. Thus, the probability that \emph{some}~$U$ of size $<2^{\psi(n)}$ has $e(U)>2^{n+6}$ is bounded by
\[
{2^n \choose \lfloor 2^{\psi(n)} \rfloor} \cdot 2^{-2^{n+6}}
\]
Using the fact that ${a \choose b} = o(2^a)$ (this is because ${a \choose b}$ is maximized for $b=\lfloor a/2 \rfloor$, and Stirling's formula implies that ${a \choose \lfloor a/2 \rfloor} \sim \frac{2^a}{\sqrt{\pi a/2}}=o(2^a)$), we see that the above expression tends to $0$ as $n$ tends to infinity. In particular, for $n$ large enough, this probability is smaller than $1/2$, thus there exists a graph $G_n$ as wanted.
\end{proof}

\bibliography{References}
\bibliographystyle{alpha}

\end{document}